\documentclass[notitlepage,reqno,11pt]{amsart}
\usepackage{amsmath}
\usepackage{amssymb}
\usepackage{amsfonts,amsthm,hyperref}
\usepackage[margin=1in]{geometry}
\usepackage{xcolor}
\usepackage[normalem]{ulem}
\usepackage[foot]{amsaddr}
\usepackage{mathtools}

\usepackage{accents}

\usepackage{soul}

\usepackage{enumitem}

\numberwithin{equation}{section}
\newtheorem{assumption}{Assumption}[section]
\newtheorem{lemma}{Lemma}[section]
\newtheorem{theorem}{Theorem}[section]
\newtheorem{definition}{Definition}[section]

\newtheorem{prop}{Proposition}[section]
\newtheorem{remark}{Remark}[section]

\newcommand{\RR}{{\mathbb R}}

\newcommand{\NN}{{\mathbb N}}

\definecolor{ISUred}{RGB}{200, 16, 46}

\newcommand{\ttl}{\Large An infinite-dimensional Skorokhod problem   \\[5pt]with a diagonalization approach}

\allowdisplaybreaks

\begin{document}

\title[]{\ttl}

\author{Louis T. Clarke$^*$, Guodong Pang$^*$ and Ruoyu Wu$^{**}$}
\address{$^*$Department of Computational Applied Mathematics and Operations Research,
    George R. Brown School of Engineering and Computing, 
    Rice University,
    Houston, TX 77005}
    \email{lc160,\,gdpang@rice.edu}
\address{$^{**}$Department of Mathematics,
    Iowa State University, Ames, IA 50011
    }
    \email{ruoyu@iastate.edu}
        

\begin{abstract} We study an infinite-dimensional Skorokhod problem on the positive cone of $L_1([0,1])$, where a regulator process constrains a free càdlàg path to remain nonnegative through an oblique reflection mechanism determined by a positive linear integral operator $\boldsymbol{F}$ on $L_1([0,1])$ with spectral radius less than $1$. This constrained process $Z$ remains in the positive cone, and the regulator $Y$ is nondecreasing with respect to the cone order and satisfies a pointwise complementarity condition, increasing only on those space-time regions where the corresponding component of $Z$ vanishes. This formulation extends the classical multidimensional Skorokhod problem on the nonnegative orthant to an infinite-dimensional setting.
Under suitable assumptions on the operator $\boldsymbol{F}$ that guarantee a dominant positive eigenfunction, we proceed through a diagonalization argument to give an alternative representation of the system in which the $L_1$ norm of the linear operator is less than 1. From this characterization, we establish existence and uniqueness of solutions for arbitrary input paths $X\in D([0,T],L_1([0,1]))$. We further show that the associated Skorokhod map is well defined and Lipschitz continuous with respect to both the uniform and Skorokhod $J_1$ topologies. As a consequence, solutions may be obtained as limits of multidimensional Skorokhod problems, providing a constructive characterization of the infinite-dimensional reflection mechanism.
These results provide a framework for the study of constrained stochastic systems with infinitely many interacting components and provide a foundation for reflected stochastic processes evolving in function spaces.
\end{abstract}

\keywords{Infinite-dimensional Skorokhod problem, reflection operator, existence and uniqueness, diagonalization, Lipschitz continuity,  space $D([0,T],L_1)$}

\maketitle

\section{Introduction}
\label{sec:intro}

The Skorokhod problem provides a mechanism for constraining a path to remain within a prescribed domain through the action of a regulator process. In the classical $d$-dimensional setting on the nonnegative orthant, the reflected path is represented as
\begin{equation} \label{eq:origskorokhod}
z(t)=x(t)+(I-F)y(t),
\end{equation}
where $x$ is the unconstrained input path on $D([0,T],\RR^d)$, and $y$ is a nondecreasing regulator chosen so that $z$ remains nonnegative, and $I$ is the identity matrix. We refer to $z$ as the reflected process and $y$ as the regulator. The matrix $F$ determines the directions in which the regulating force acts at the boundary, and has spectral radius less than $1$. When $F\equiv0$, the reflection occurs along the coordinate directions and coincides with the standard orthogonal reflection. More generally, the term $(I-F)y$ allows the regulator to act in directions that are not normal to the boundary, leading to an oblique reflection mechanism. This has been extensively studied, see \cite{HarrisonReiman1981reflected, chen1994hierarchical, Whitt2002, ChenYao2001fundamentals, haddad2010pathwise}, and has seen particular application to Jackson networks and other stochastic network and queueing models. 

In \cite{clarke2026growing}, this concept was recently extended to an uncountably infinite dimension, considering paths in $D([0,T],L_1([0,1]))$ and replacing the nonnegative orthant with the positive cone of $L_1([0,1])$. Specifically, given a càdlàg input path $X = \{X_u(t)\}_{u\in[0,1],t\in[0,T]}\in D([0,T],L_1([0,1]))$, we consider the problem of finding a pair of càdlàg processes $(Z,Y)$ satisfying
\begin{equation} \label{eq:infskorokhod}
Z(t) = X(t)+({\bf1} - \boldsymbol{F})Y(t),
\end{equation}
where $\boldsymbol{F}$ is a positive linear integral operator on $L_1([0,1])$ with spectral radius less than $1$, known as the reflection operator. We choose $(Y,Z)$ such that the reflected process $Z$ remains in the positive cone, and the regulator $Y$ is nondecreasing with respect to the cone order and satisfies a pointwise complementarity condition, namely, 
$Z_u(t)dY_u(t) = 0$ for all $u\in[0,1]$. Futhermore, one can show that under the uniform norm $\|f\|_{T,1} = \int_0^1\sup_{0\le t\le T}|f_u(t)|du$, the solutions to the Skorokhod problem $Y$ and $Z$ are Lipschitz with respect to inputs $\boldsymbol{F}$ and $X$. Together, we refer to the solution $(Y,Z)$ as the reflection map, and ask, given free process $X$ and reflection operator $\boldsymbol{F}$, whether the reflection map exists, is unique, and is Lipschitz continuous with respect to both the free process and reflection operator.

The Skorokhod problem originated in 1961 with the seminal work of Skorokhod \cite{skorokhod1961stochastic}, who introduced diffusion processes in one dimension with reflecting boundaries and established existence and uniqueness results. In this setting, reflection was studied primarily as a mechanism for constructing stochastic processes. This was further studied by McKean in \cite{mckean1963skorohod,mckean1969stochastic}, looking at multi-dimensional diffusions and stochastic boundary problems. The multi-dimensional problem was further studied by Tanaka in \cite{tanaka1979stochastic}, studying reflected Brownian motion with normal reflection. The theory was significantly extended in 1981 by Harrison and Reiman \cite{HarrisonReiman1981reflected}, who considered reflected Brownian motion in the nonnegative orthant and introduced the multidimensional reflection equation \eqref{eq:origskorokhod}. Subsequent developments shifted attention from specific diffusion processes to the deterministic mapping that associates an unconstrained path with its reflected counterpart. The work by Chen and Mandelbaum \cite{chen1991leontief} studied the problem in $D([0,T],\mathbb{R}^d)$ and developed geometric interpretations of the regulator through production-set formulations and further clarified the structure of oblique reflection. This framework was further developed by Whitt \cite[Chapter 14]{Whitt2002}, formalizing the notion of a regulator set. A more abstract perspective was introduced by Dupuis and Ishii \cite{dupuis1991lipschitz}, who studied the Skorokhod map in $C([0,T],\mathbb{R}^d)$ as a deterministic operator on path spaces and established fundamental continuity and Lipschitz properties. 

When studying the finite-dimensional Skorokhod problem posed by \eqref{eq:origskorokhod}, there are two distinct approaches to establish existence, uniqueness, and Lipschitz continuity. In \cite{HarrisonReiman1981reflected}, the solution to the Skorokhod problem is characterized as the unique fixed point of a mapping which is shown to be a contraction through a diagonalization argument on the routing matrix. Here, the diagonal matrix $D$ takes entries from the dominant eigenvector of $F^T$ such that $DFD^{-1}$ has the maximum column sum equal to its spectral radius, specified as being less than $1$. This property allows for direct contraction mapping arguments to be used in the arguments for existence, uniqueness, and Lipschitz continuity. 
Through the introduction of the regulator set, works such as \cite{chen1991leontief} and \cite{Whitt2002} do not rely on a diagonalization argument. Instead, they show that the regulator $y(t)$ is the infimum of the set $\{w\in D([0,T],\mathbb{R}^d) : x + (I-F)w\ge0\}$. Existence is proved by showing that the regulator set is nonempty, and uniqueness follows from the definition of the infimum. This instead relies on the properties of $F$, in particular that as the spectral radius is less than $1$, it must hold for some $k\le d$ that $F^k$ has maximum column sum less than $1$. From this, one can form $k$-stage contraction arguments similar to those of the diagonalized case, from which continuity and Lipschitz properties can be established.

In moving from finite to infinite dimension, one must formalize similar concepts moving from processes in the space $D([0,T],\mathbb{R}^d)$ to $D([0,T],L_1([0,1]))$. 
In \cite[Section 3]{clarke2026growing}, we adapt the latter of the two methods mentioned in the paragraph above. In this case, the regulator set is extended to this new space, namely $\{W\in D([0,T]L_1([0,1])) : X + ({\bf1}-\boldsymbol{F})W\ge0\}$. Here, the solution $Y$ to \eqref{eq:infskorokhod} is given by the pointwise infimum of the regulator set, with $Z$ following its definition in the same equation. One then shows that the regulator can equivalently be written as the unique fixed point of a mapping $\pi$. Under the condition that the (operator) spectral radius of $\boldsymbol{F}$ is less than $1$, one can then apply the fact that the $L_1$ norm of the $k$-fold composition of $\boldsymbol{F}$ must be less than $1$ for some $k\ge1$ to show that $\pi$ is a $k$-stage contraction. Similarly to \cite{Whitt2002} this can then be used to derive Lipschitz properties of the reflection map with respect to both the free process $X$ and the reflection operator $\boldsymbol{F}$.

In this paper, we instead adopt the diagonalization approach first introduced in \cite{HarrisonReiman1981reflected}. For the finite-dimensional Skorokhod problem, to proceed with a diagonalization argument it is sufficient to simply assume that the spectral radius of the transition matrix is less than 1. However, one must take more care when dealing with operators, as the behavior of a general positive diagonal system may not restrict boundedness of the constants associated with Lipschitz conditions to be finite. To be more preceise, if diagonalization causes the sytem to either vanish or explode at certain points, then these points will cause issues when converting back to the original system. This imposes a new set of assumptions on the reflection operator $\boldsymbol{F}$, ensuring that $\boldsymbol{F}$ is both eventually strongly positive and quasi-compact (see Assumption \ref{assumption:generalF}). Under this framework, we establish existence and uniqueness of solutions to the Skorokhod problem in Theorem \ref{thm:ExtUni}. To do this, we introduce a mapping $\pi_{X,F}$ on the function space for the diagonalized system that is shown to have a unique fixed point that satisfies the Skorokhod problem.

We next consider Lipschitz continuity of the reflection map, first in the $\|\cdot\|_{T,1}$ metric. By creating two iterative processes using the mapping $\pi_{X,F}$ so each converges to the fixed point characterized by the regulator of the Skorokhod problem, we prove two results on Lipschitz continuity, first in the free process $X$ in Theorem \ref{thm:DiagLipschitz}, and then in the reflection operator $\boldsymbol{F}$ in Lemma \ref{lem:refbound}. Combining these two results, along with a further result allowing two separate processes to share the same diagonal operator if their reflection operators are close enough in the $L_1$ norm, provides a Lipschitz continuity result in both the free process and the reflection operator in Theorem \ref{thm:doublelipschitzsame}. This is extended to Theorem \ref{thm:seqlipschitz} for a sequence of processes that converge to a limit process. 
We also study the corresponding Lipschitz continuity properties of the refleciton map under the Skorokhod $J_1$ metric (see the metric defined in \eqref{eq:skotop}). To work with this topology, we first present an explicit result on time rescaling of the reflection map, in Proposition \ref{prop:rescale}. This provides the necessary theory to prove results in Lipschitz continuity with this new metric. with respect to the free process in Theorem \ref{thm:skolipfree} and with respect to both the free process and reflection operator in Theorem \ref{thm:skolipboth}.

The Skorokhod problem plays a fundamental role in the study of constrained stochastic systems. A prominent example arises in Jackson queueing networks, where queue lengths must remain nonnegative and the cumulative idle-time process when a station is not serving any customers acts as a regulator preventing excursions below zero. In the case when the spectral radius of the transition matrix is less than one, this represents an open Jackson network, where jobs can enter and leave the system at different nodes. In heavy traffic, for the queue-length and workload processes, fluid limits are typically given as the deterministic solution of the Skorokhod problem, while  diffusion approximations are described by reflected Brownian motions whose sample paths are obtained through solutions of an associated Skorokhod problem, see \cite{Whitt2002}. 

The infinite-dimensional Skorokhod problem can see application arising naturally in systems with a continuum of interacting components, such as measure-valued population models, stochastic transport and storage systems, and large-scale (spatial)
 queueing networks indexed by a continuous parameter. In these settings the state of the system is more naturally represented by a function or measure rather than a finite-dimensional vector, leading to reflection problems on infinite-dimensional spaces. This paper along with \cite{clarke2026growing} can therefore provide a foundation for the analysis of constrained stochastic processes evolving in function spaces and for the study of continuum limits of large interacting systems with certain constraints.

\subsection{Organization of the paper}
The paper is organized as follows. In the next subsection,  we summarize the notation and some definitions used throughout the paper. In Section \ref{sec:diagmodel}, we formally define the model and present theorems on diagonalization of linear operators, and present Theorem \ref{thm:ExtUni} proving uniquneness and existence of a solution. In Section \ref{sec:eigfun}, we provide the theory behind diagonalization of operator $\boldsymbol{F}$ under Assumption \ref{assumption:generalF}. 
In Section \ref{sec:existunq}, we prove Theorem \ref{thm:ExtUni}. In Section \ref{sec:continuitybdddiagonal}, we provide theorems on Lipschitz continuity, first in the uniform norm and then using the time-scaled $J_1$ topology.
Supplementary theory and additional proofs are given in Appendix \ref{sec:appendix}.

\subsection{Definitions and Notation} We summarize the commonly used notation here. 
Let $\mathcal{D}_T^k = D([0,T], \mathbb{R}^k)$ be the space of c{\`a}dl{\`a}g functions on $[0,T]$ taking values in $\mathbb{R}^k$. For $x\in\mathcal{D}_T^k$, $x$ has components $x_i(t)$, $t\in[0,T]$, $i=1,\dots, k$. 
Let $\mathcal{C}_T^k$ be the subspace of continuous function in $\mathcal{D}_T^k$. For $k=1$, we simply write $\mathcal{C}_T$ and $\mathcal{D}_T$. 
For $x: [a,b] \to \RR$, we define $\|x\|_1 = \int_{[a,b]} |x(u)|du$

{\it Space  $\mathcal{D}_T(L_1)$.} We say $f \coloneq \{f_u(t):u \in [0,1], t \in [0,T]\} \in \mathcal{D}_T(L_1) $ if $f \in D([0,T], L_1[0,1])$, meaning that (i)
$\|f\|_{T,1}<\infty$, where 
\begin{equation}\label{eq:unifnorm}
\|f\|_{T,1}:= \int_0^1\sup_{0 \leq t \leq T} |f_u(t)|\,du,
\end{equation}
and (ii) for each $u\in[0,1]$, 
$f_u \in \mathcal{D}_T$. 
Note that the dominated convergence theorem guarantees that $[0,T] \ni t \mapsto \{f_u(t) : u\in[0,1]\} \in L_1$ is c{\`a}dl{\`a}g for $f \in \mathcal{D}_T(L_1)$. 
We write $\mathcal{C}_T(L_1)$ to denote the subspace of $\mathcal{D}_T(L_1)$, containing functions that are continuous in $t$, that is, in condition (ii) above,
$f_u \in \mathcal{C}_T$. 
Define $\mathcal{D}_T^\uparrow(L_1) = \{f \in \mathcal{D}_T(L_1): f_u(0) \geq 0, \, d\, f_u(t) \geq 0 \, \forall t\in [0,T], u \in [0,1] \}$, and define $\mathcal{C}^\uparrow_T(L_1)$ analogously. 
For $f,g\in\mathcal{D}_T(L_1)$, we say $f\geq g$ if $f_u(t)\geq g_u(t)$ for all $u\in[0,1]$ and $t\in[0,T]$. 
For $y\in\mathcal{D}_T(L_1)$ and any $t\in[0,T]$, $[f(t)]^+$ is defined for any $u\in[0,1]$ by $[f(t)]^+_u \coloneq [f_u(t)]^+$, where  $[x]^+ \coloneq \max(x,0)$ for $x\in\mathbb{R}$. 

{\it Operators.} For a function $G \colon [0,1]^2 \to \RR_+$, we use the bold symbol to denote the corresponding integral operator $\boldsymbol{G}$ by 
\begin{equation}
\label{eq:Operator}
    (\boldsymbol{G} f)(u) := \int_0^1 G(u,v)f(v)\,dv, \quad f:[0,1] \mapsto\mathbb{R}. 
\end{equation}
Denote the set of integral operators by $\boldsymbol{\mathcal{G}}$. When an operator $\boldsymbol{G}$ is given, it should be understood that the non-bold $G$ represents the corresponding kernel function. In the case where we are dealing with functions in $\mathcal{D}_T(L_1)$, it should be understood that the operator acts on the spatial dimension, not the temporal, and can be written as
\begin{equation}
\label{eq:Operator2}
    (\boldsymbol{G} f)_u(t) := \int_0^1 G(u,v)f_v(t)\,dv,\quad f \in \mathcal{D}_T(L_1).
\end{equation}
Operator $\boldsymbol{G}$ is said to be \textit{nonnegative} and we write $\boldsymbol{G} \ge 0$ if $G(u,v)\geq0$ for all $(u,v) \in [0,1] \times [0,1]$, and $\boldsymbol{G}\le0$ is defined analogously.
The transpose $\boldsymbol{G}^T$ of operator $\boldsymbol{G}$ is defined as
\begin{equation}
\label{eq:OperatorTranspose}
    (\boldsymbol{G}^T f)(u) := \int_0^1 G(v,u)f(v)\,dv\,.
\end{equation}
The $n$-fold composition of operator $\boldsymbol{G}$ is denoted $\boldsymbol{G}^{(n)}$. 
The operator norm $\|\boldsymbol{G}\|_{op}$ of an integral operator is given by
$$\|\boldsymbol{G}\|_{op} \coloneqq \sup_{v \in [0,1]}\int_0^1  |G(u,v)|du. $$

For integral operators $\boldsymbol{F}_1, \boldsymbol{F}_2$ and $f\in\mathcal{D}_T(L_1)$, we have $\|\boldsymbol{F}_1f\|_{T,1}\le \|\boldsymbol{F}_1\|_{op}\|f\|_{T,1}$ and $\|\boldsymbol{F}_1\boldsymbol{F}_2\|_{op}\le\|\boldsymbol{F}_1\|_{op}\|\boldsymbol{F}_2\|_{op}$. These properties are proved in Lemmas A.1 and A.2 of \cite{clarke2026growing}.

The identity operator ${\bf1}$ acts on either $L_1$ or $\mathcal{D}_T(L_1)$ through 
\begin{align*}
(\boldsymbol{1}f)(u) := f(u), \quad f:[0,1] \mapsto\mathbb{R}, \\
(\boldsymbol{1}f)_u(t) := f_u(t), \quad f \in \mathcal{D}_T(L_1).
\end{align*}
As $\|\boldsymbol{1}f\|_{T,1} = \|f\|_{T,1}$, we also define $\|{\bf1}\|_{op}=1$.

For functions $f,g \in \mathcal{D}_T(L_1)$, we define $\operatorname{diag}(f)$ by
$(\operatorname{diag}(f)g)_u(t) = f_u(t)g_u(t)$.

Let $\boldsymbol{F}$ be an operator as defined in \eqref{eq:Operator} and $\rho(\boldsymbol{F})$ be the spectral radius of $\boldsymbol{F}$.  
Define the set of reflection operators: 

\begin{equation} \label{eq:Rdef}
\mathcal{R}:=\{\boldsymbol{F}\in\boldsymbol{\mathcal{G}}: \boldsymbol{F}\ge 0 \,\,\text{and}\,\,  \rho(\boldsymbol{F})<1 \}. 
\end{equation}

Denote
\begin{equation} \label{eq:rtilddef}
\widetilde{\mathcal{R}} := \{\boldsymbol{F}\in\mathcal{R}: \|\boldsymbol{F}\|_{op}<1\}
\end{equation} as the subset of $\mathcal{R}$ with operator norm strictly less than 1. 

For functions $f,g:[0,1]\to\mathbb{R}$, define the diagonal operator $\boldsymbol{D} = \operatorname{diag}(f)$ by
$(\boldsymbol{D}g)(u) = f(u)g(u)$ and $(\boldsymbol{D}^{-1}g)(u) = \frac{g(u)}{f(u)}$. For such an $f$, the operator norm acts as $\|\boldsymbol{D}\|_{op} = \operatorname*{ess\,sup}_{u\in[0,1]}f(u)$ and $\|\boldsymbol{D}^{-1}\|_{op} = (\operatorname*{ess\,inf}_{u\in[0,1]}f(u))^{-1}$, whenever these values are finite. We say diagonal operator $\boldsymbol{D} = \operatorname{diag}(f)$ is strictly positive if $f>0$ a.e. $u\in[0,1]$.

The following property will be used several times throughout the paper: For any two measurable functions $f,g \colon [0,T]\to\RR$ and $t \in [0,T]$, 
\begin{equation}\label{eq:supproperty}  
\left|\sup_{0 \le s \le t} f(s) - \sup_{0 \le s \le t} g(s)\right| \le \sup_{0 \le s \le t} |f(s)-g(s)|.
\end{equation}

\begin{remark}

For processes in $D([0,T],L_1([0,1]))$, an alternative metric could be of the form $\sup_t\|\cdot\|_1$. 
Although this metric is equivalent to the metric $\|\cdot\|_{T,1}$ in the finite-dimensional setting, 
we find it more appropriate to use the metric $\|\cdot\|_{T,1}$ in the infinite-dimensional setting for both technicalities and applications.

Technically, this can be seen in Section \ref{sec:existunq}, where the regulator for the Skorokhod problem is given as the running supremum of $[-X(t)+\boldsymbol{F}Y(t)]^+$. As this is given as the pointwise solution, it makes more natural sense that the norm be of the form $\|\cdot\|_{T,1}$, measuring the $L_1$ norm of the pointwise supremums of each function. For applications, particularly in the mean-field type of analysis of large-scale network models and interacting systems, the convergence of functionals of empirical measures would require the metric $\|\cdot\|_{T,1}$ (see,e.g., \cite{clarke2026growing}). 

\end{remark}

\medskip

\section{An infinite-dimensional Skorokhod problem via diagonalization} \label{sec:diagmodel}

We study  the following system:
\begin{equation} 
    \label{eq:cond1op}
    Z(t) = X(t) +  ({\bf1}-\boldsymbol{F}) Y(t) \geq 0,
\end{equation}
\begin{equation}
    \label{eq:cond2op}
    dY(t) \geq 0 \text{  and  } Y(0) = 0,
\end{equation}
\begin{equation} 
    \label{eq:cond3op}
    Z_u(t)dY_u(t) = 0 \text{ for all } u \in [0,1]\,.
\end{equation}

We say that this set of equations defines an infinite-dimensional Skorokhod mapping in $\mathcal{D}_T(L_1)$. 
We refer to $X$ as the \textbf{free process}. When  we have existence and uniqueness of $(Z,Y)$ given $X$, denote by $\Phi_F(X):=Z$ and $\Psi_F(X):=Y$ the \textbf{reflected process} and \textbf{regulator}, respectively. It should be understood that $dY_u(t)\ge0$ is interpreted as $Y_u(\cdot)$ nondecreasing,  
$dY_u(t)>0$ as $Y_u(\cdot)$ increasing, and that \eqref{eq:cond3op} is interpreted as $Y_u(\cdot)$ increases only when $Z_u(t)=0$.

Given any $X\in\mathcal{D}_T(L_1)$ and $\boldsymbol{F}\in\mathcal{R}$ as defined in \eqref{eq:Rdef}, we aim to prove the following properties:
\begin{itemize}
    \item Existence and uniqueness of $(Y,Z)\in (\mathcal{D}^\uparrow_T(L_1), \mathcal{D}_T(L_1))$ satisfying \eqref{eq:cond1op}--\eqref{eq:cond3op}.
    \item Continuity and Lipschitz of $(Y,Z)$ in both $X$ and $\boldsymbol{F}$.
\end{itemize}

To continue, we first introduce the diagonalized system, in an analogous manner for the finite dimensional Skorokohod mapping in \cite{HarrisonReiman1981reflected}.

\begin{definition}[Diagonalized System]
\label{rem:rephrase}
Let $\boldsymbol{D}$ be a strictly positive diagonal operator.
By letting $\boldsymbol{\widetilde{P}}= \boldsymbol{D}\boldsymbol{F}\boldsymbol{D}^{-1}$, we can continue the analysis of \eqref{eq:cond1op}--\eqref{eq:cond3op}:
\begin{align*}
& Z(t) = X (t)+ ({\bf1}-\boldsymbol{F})Y(t) \\
& \implies \boldsymbol{D}Z(t) = \boldsymbol{D}X(t) + \boldsymbol{D}({\bf1}-\boldsymbol{F})\boldsymbol{D}^{-1}\boldsymbol{D}Y(t) \\
& \implies \boldsymbol{D}Z(t) = \boldsymbol{D}X(t) + ({\bf1}-\boldsymbol{\widetilde{P}})\boldsymbol{D}Y(t)\,.
\end{align*}

In this way, we have replaced $Y$ with $\boldsymbol{D}Y$ and since $\boldsymbol{D}$ is a strictly positive operator, the dynamics of the problem is identical to the original problem.
\\ \\
Further writing $\widetilde{X}=\boldsymbol{D}X$, $\widetilde{Y}=\boldsymbol{D}Y$ and $\widetilde{Z}=\boldsymbol{D}Y$, we can then rewrite \eqref{eq:cond1op}--\eqref{eq:cond3op} as 
\begin{equation} 
\label{eq:cond1opnew}
    \widetilde{Z}(t) = \widetilde{X}(t) +  ({\bf1}-\boldsymbol{\widetilde{P}}) \widetilde{Y}(t) \geq 0,
\end{equation}
\begin{equation}
\label{eq:cond2opnew}
    d\widetilde{Y}(t) \geq 0 \text{  and  } \widetilde{Y}(0) = 0,
\end{equation}
\begin{equation} 
\label{eq:cond3opnew}
    \widetilde{Z}_u(t)d\widetilde{Y}_u(t) = 0 \text{ for all } u \in [0,1].
\end{equation}
We refer to this as the \textbf{diagonalized  system}.
\end{definition}

Note that while the diagonlized system can be defined for any strictly positive diagonal operator, to be of any use for our analysis, we aim to find a diagonal operator $\boldsymbol{D}$ such that the reflection matrix of the diagonalized system $\boldsymbol{\widetilde{P}}$ satisfies $\|\boldsymbol{\widetilde{P}}\|_{op}<1$, i.e., $\boldsymbol{\widetilde{P}}\in\widetilde{\mathcal{R}}$ as defined in \eqref{eq:rtilddef}. We will show that for such a $\boldsymbol{\widetilde{P}}$, in this new system the questions of existence, uniqueness, and Lipschitz continuity can be concisely proved. We provide below an assumption that guarantees this.

\begin{assumption} \label{assumption:generalF}
For $\boldsymbol{F}\in\mathcal{R}$, both of the following properties hold:

\begin{enumerate}[label=\roman*]
    \item[(i)] $\boldsymbol{F}^T$ is strongly positive: for every $f\in L_\infty([0,1])$, $f\ge0$, where $f>0$ on a set of positive measure, there exists $n\ge1$ such that $\operatorname*{ess\,inf}_{u\in[0,1]}((\boldsymbol{F}^T)^{(n)})f(u) > 0$, 
    \item[(ii)] $\boldsymbol{F}^T$ is quasi-compact: $\rho_{ess}(\boldsymbol{F}) < \rho(\boldsymbol{F})$, where $\rho_{ess}(\boldsymbol{F})$ is the essential spectral radius of $\boldsymbol{F}$.
\end{enumerate}
\end{assumption}

We now present a proposition formalizing the discussion above. Its proof is given in Section \ref{sec:eigfun}.

\begin{prop}
\label{prop:OpDiag}
Let $\boldsymbol{F}$ be an operator in $\mathcal{R}$ satisfying Assumption \ref{assumption:generalF}.
Then there exists a diagonal multiplication operator $\boldsymbol{D} = \operatorname{diag}(\delta)$ such that $\operatorname*{ess\,inf}_{u\in[0,1]}\delta(u)>0$, $\operatorname*{ess\,sup}_{u\in[0,1]}\delta(u)<\infty$, and the operator $\boldsymbol{P} \coloneq \boldsymbol{D}^{-1}\boldsymbol{F}^T\boldsymbol{D}$ satisfies
\begin{equation}
\label{eq:diagnorm}
 \int_0^1 P(u,v) \, dv= \rho(\boldsymbol{F}) <1, \quad \forall u\in [0,1].
\end{equation}
Furthermore, letting $\boldsymbol{\widetilde{P}} = \boldsymbol{D}\boldsymbol{F}\boldsymbol{D}^{-1}$, we have 
\begin{equation*}
    \int_0^1 \widetilde{P}(u,v) \, du= \rho(\boldsymbol{F}) <1, \quad \forall u\in [0,1].
\end{equation*}
In particular, $\boldsymbol{\widetilde{P}}\in\widetilde{\mathcal{R}}$, and
\begin{equation*}
    \|\boldsymbol{\widetilde{P}}\|_{op} = \rho(\boldsymbol{F}) < 1\,.
\end{equation*}
\end{prop}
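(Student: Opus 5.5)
The plan is to obtain $\delta$ as a dominant positive eigenfunction of $\boldsymbol{F}^T$ acting on $L_\infty([0,1])$, via Krein--Rutman/Perron--Frobenius theory for positive quasi-compact operators, and then to check the row and column sums directly.

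\textbf{Existence of a nonnegative eigenfunction.} View $\boldsymbol{F}^T$ as a positive operator on the Banach lattice $L_\infty([0,1])$; it is the Banach-space adjoint of $\boldsymbol{F}$ on $L_1$, so $\rho(\boldsymbol{F}^T)=\rho(\boldsymbol{F})=:r$ and the essential spectral radii coincide. By Assumption \ref{assumption:generalF}(ii), $r>\rho_{ess}$, so the part of the spectrum on $\{|z|=r\}$ consists of finitely many isolated eigenvalues of finite algebraic multiplicity. Hence the resolvent has a pole at $r$. For positive operators the spectral radius lies in the spectrum (Pringsheim-type argument on the resolvent series). The leading coefficient of the Laurent expansion at $r$ is a positive operator, and applying it to a suitable positive function produces an eigenfunction $\delta\ge0$, $\delta\neq0$, with $\boldsymbol{F}^T\delta=r\delta$.

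We must also rule out $r=0$. If $r=0$, then $\rho_{ess}<r$ is impossible, so Assumption \ref{assumption:generalF}(ii) forces $r>0$.

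\textbf{Strict positivity and boundedness of $\delta$.} Since $\delta\in L_\infty$, we have $\operatorname*{ess\,sup}\delta<\infty$ automatically. For the lower bound, apply Assumption \ref{assumption:generalF}(i) to $f=\delta$: there is $n$ with
\[
\operatorname*{ess\,inf}_u (\boldsymbol{F}^T)^{(n)}\delta(u)>0.
\]
Since $(\boldsymbol{F}^T)^{(n)}\delta=r^n\delta$, this gives $\operatorname*{ess\,inf}\delta>0$. Modifying $\delta$ on a null set, we may take $0<c\le\delta\le C$ everywhere, so $\boldsymbol{D}$ and $\boldsymbol{D}^{-1}$ are bounded.

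\textbf{Computing the row and column sums.} The kernel of $\boldsymbol{P}=\boldsymbol{D}^{-1}\boldsymbol{F}^T\boldsymbol{D}$ is $P(u,v)=F(v,u)\delta(v)/\delta(u)$. Therefore
\[
\int_0^1 P(u,v)\,dv=\frac{(\boldsymbol{F}^T\delta)(u)}{\delta(u)}=r .
\]
This holds for a.e. $u$, and for every $u$ once we choose the representative $\delta:=r^{-1}\boldsymbol{F}^T\delta$, which is defined pointwise.

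Similarly, $\widetilde{P}(u,v)=\delta(u)F(u,v)/\delta(v)$, so
\[
\int_0^1\widetilde{P}(u,v)\,du=\frac{(\boldsymbol{F}^T\delta)(v)}{\delta(v)}=r
\]
for every $v$. Taking the supremum over $v$ gives $\|\boldsymbol{\widetilde P}\|_{op}=r<1$. Since $\widetilde{P}\ge0$, and $\boldsymbol{\widetilde P}$ is similar to $\boldsymbol{F}$ so its spectral radius is $r<1$, we conclude $\boldsymbol{\widetilde P}\in\widetilde{\mathcal R}$.

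\textbf{Main obstacle.} The delicate step is the existence of the positive eigenvector. On $L_\infty$, $\boldsymbol{F}^T$ need not be compact, so we cannot cite Krein--Rutman directly. I would use the quasi-compactness to write the Laurent expansion at $r$, $R(\lambda)=\sum_{k\ge -m}(\lambda-r)^kQ_k$ with $m$ the pole order. The leading term satisfies $Q_{-m}=\lim_{\lambda\downarrow r}(\lambda-r)^mR(\lambda)$, which is a positive operator because $R(\lambda)=\sum_j\lambda^{-j-1}(\boldsymbol{F}^T)^j\ge0$ for $\lambda>r$. Moreover $Q_{-m}\neq0$ and $(\boldsymbol{F}^T-r)Q_{-m}=0$. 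Since $Q_{-m}\ne0$ and the positive cone of $L_\infty$ is generating, some positive $g$ has $Q_{-m}g\ne0$. Then $Q_{-m}g$ is a positive eigenfunction for $r$.

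A further technical point is ensuring that $r$ lies in the peripheral spectrum at all. Since $r=\rho(\boldsymbol{F}^T)$ is the spectral radius, it is in the spectrum by positivity, and it exceeds $\rho_{ess}$, so it is a pole rather than essential spectrum.
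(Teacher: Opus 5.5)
Your proposal is correct and follows the paper's proof. Both obtain a nonnegative eigenfunction $\delta$ of $\boldsymbol{F}^T$ on $L_\infty([0,1])$ for the eigenvalue $\rho(\boldsymbol{F})$, apply Assumption \ref{assumption:generalF}(i) to $\delta$ itself to get $\operatorname*{ess\,inf}\delta>0$, and read off the row sums of $\boldsymbol{P}$ and the column sums of $\boldsymbol{\widetilde{P}}$ from the kernel formulas. The one difference is in the existence of the nonnegative eigenfunction: the paper (Lemma \ref{lem:kreinrutman}) cites Nussbaum's Corollary 2.2, whereas you prove it directly from the positivity of the leading Laurent coefficient of the resolvent at the pole $r$. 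That is a valid self-contained substitute, and you also make explicit that $r>0$ and that $\rho(\boldsymbol{\widetilde{P}})=r$ by similarity, which the paper leaves implicit.
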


\begin{remark}
This is an infinite-dimensional version of the result for matrices, seen in Theorem 2.5.4 of \cite{bermanplemmons1994}.
\end{remark}

While Assumption \ref{assumption:generalF} provides sufficient conditions to diagonalize the system in a meaningful way, it may be difficult in general to check if an operator $\boldsymbol{F}$ satisfies its conditions.
We present below an interpretable example that satisfies the conditions of Assumption \ref{assumption:generalF}. The proof can be found in 
Appendix \ref{sec:appendix}.

\begin{lemma}
\label{lem:doeblintype} The following condition on $\boldsymbol{F}$ satisfies both conditions (i) and  (ii) of Assumption \ref{assumption:generalF}:

For $\boldsymbol{F}\in\mathcal{R}$, there exists $m\in\NN$, $\epsilon>\frac{1}{2}$, and a probability density function $\nu$ on $[0,1]$ with $\nu>0$ a.e.\ such that for each $u,v \in [0,1]$,
\begin{equation} \label{eq:doeblin}
(F^T)^{(m)}(u,v) \ge \epsilon\nu(v)\,,
\end{equation}
where $(F^T)^{(m)}$ is the kernel associated with operator $(\boldsymbol{F}^T)^{(m)}$.
\end{lemma}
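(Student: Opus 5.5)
Write $\boldsymbol{K}:=\boldsymbol{F}^T$ and $K_m:=(F^T)^{(m)}$. The two conditions of Assumption \ref{assumption:generalF} will be checked separately, both using only the minorization \eqref{eq:doeblin}.

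\textbf{Condition (i).} Take $f\in L_\infty([0,1])$ with $f\ge0$ and $f>0$ on a set $A$ of positive measure. By \eqref{eq:doeblin}, for every $u$,
\[
(\boldsymbol{K}^{(m)}f)(u)=\int_0^1 K_m(u,v)f(v)\,dv\ \ge\ \epsilon\int_0^1\nu(v)f(v)\,dv .
\]
The right-hand side does not depend on $u$. It is strictly positive because $\nu f>0$ a.e.\ on $A$ and $|A|>0$. Hence $\operatorname*{ess\,inf}_u(\boldsymbol{K}^{(m)}f)(u)>0$, and $n=m$ works. This step only needs $\epsilon>0$ and $\nu>0$ a.e.

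\textbf{Condition (ii).} This is where $\epsilon>\tfrac12$ enters. The plan is to split
\[
K_m(u,v)=\epsilon\nu(v)+R(u,v),\qquad R:=K_m-\epsilon\nu\ge 0 .
\]
The operator $\boldsymbol{N}f:=\epsilon\langle\nu,f\rangle\mathbf{1}$ has rank one, so it is compact. Since essential spectra and essential spectral radii are unchanged by compact perturbations,
\[
\rho_{ess}(\boldsymbol{K}^{(m)})=\rho_{ess}(\boldsymbol{R})\le\|\boldsymbol{R}\|.
\]
I would estimate $\|\boldsymbol{R}\|$ on $L_\infty$, which is the natural space for $\boldsymbol{K}=\boldsymbol{F}^T$ given how (i) is phrased; this is the row-sum norm:
\[
\|\boldsymbol{R}\|_{L_\infty}=\operatorname*{ess\,sup}_u\int_0^1 R(u,v)\,dv=\operatorname*{ess\,sup}_u\int_0^1 K_m(u,v)\,dv-\epsilon .
\]

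\textbf{Main obstacle: comparing this bound with $\rho(\boldsymbol{K}^{(m)})=\rho(\boldsymbol{F})^m$.} We must show $\operatorname*{ess\,sup}_u\int_0^1 K_m(u,v)\,dv-\epsilon<\rho(\boldsymbol{F})^m$.
\begin{itemize}
\item For the lower bound on $\rho$: apply $\boldsymbol{K}^{(m)}$ to the constant function $\mathbf{1}$. Then $\boldsymbol{K}^{(m)}\mathbf{1}\ge\epsilon\mathbf{1}$ pointwise, and iterating the positive operator gives $\boldsymbol{K}^{(mk)}\mathbf{1}\ge\epsilon^k\mathbf{1}$. Gelfand's formula then yields $\rho(\boldsymbol{F})^m\ge\epsilon>\tfrac12$.
\item For the upper bound on the row sums: I would combine $\rho(\boldsymbol{F})<1$ with the minorization to show $\operatorname*{ess\,sup}_u\int_0^1 K_m(u,v)\,dv<2\epsilon$. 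With the previous item this gives $\|\boldsymbol{R}\|<\epsilon\le\rho(\boldsymbol{F})^m$.
\end{itemize}

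The second item is the step I expect to fail or need patching. The hypothesis as written does not obviously bound the row sums of $K_m$ by $2\epsilon$. If it does not follow, I would instead argue through the spectral mapping property for the essential spectral radius:
\[
\rho_{ess}(\boldsymbol{K})^m=\rho_{ess}(\boldsymbol{K}^{(m)})\le\|\boldsymbol{R}\|,
\]
and then use $\epsilon>\tfrac12$ to show that the residual mass $\|\boldsymbol{R}\|$ is dominated by $\epsilon$. This would again give $\rho_{ess}(\boldsymbol{K}^{(m)})<\epsilon\le\rho(\boldsymbol{K}^{(m)})$, so taking $m$-th roots yields $\rho_{ess}(\boldsymbol{F})<\rho(\boldsymbol{F})$.

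Finally, I would check that $\rho$ and $\rho_{ess}$ of $\boldsymbol{F}$ on $L_1$ agree with those of $\boldsymbol{F}^T$ on $L_\infty$, by Banach adjoint duality. This justifies computing everything on the $\boldsymbol{F}^T$ side.
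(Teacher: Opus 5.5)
Your proof of (i) is correct. For (ii), your skeleton is the paper's: the rank-one splitting, invariance of $\rho_{ess}$ under compact perturbations, $\rho_{ess}(\boldsymbol{K}^{(m)})\le\|\boldsymbol{R}\|_{L_\infty}$, the bound $\rho(\boldsymbol{F})^m\ge\epsilon$ via $\mathbf{1}$, and $\rho_{ess}(\boldsymbol{K}^{(m)})=\rho_{ess}(\boldsymbol{K})^m$. The gap is where you located it: you never establish $\|\boldsymbol{R}\|_{L_\infty}<\rho(\boldsymbol{F})^m$. Your fallback does not close it, because routing through the spectral mapping property needs that same inequality, and $\epsilon>\tfrac12$ says nothing about the mass of $R$. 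The paper handles this step by writing $\int_0^1R(u,v)\,dv=\int_0^1(F^T)^{(m)}(u,v)\,dv-\epsilon\le1-\epsilon$. That tacitly assumes $\int_0^1(F^T)^{(m)}(u,v)\,dv\le1$, i.e.\ the column sums of $F^{(m)}$ are at most one (true, e.g., when $\|\boldsymbol{F}\|_{op}\le1$). Granting it, $\|\boldsymbol{R}\|_{L_\infty}\le1-\epsilon<\tfrac12<\epsilon\le\rho(\boldsymbol{F})^m$, which is exactly your target $\operatorname*{ess\,sup}_u\int_0^1K_m(u,v)\,dv<2\epsilon$.

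Your suspicion is justified. That bound does not follow from $\boldsymbol{F}\in\mathcal{R}$ and \eqref{eq:doeblin}, and without it (ii) can fail, so no patch from the stated hypotheses exists. Here is a counterexample.

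Take $m=1$ and $\nu\equiv1$. Partition $[0,1]$ into intervals $J_0,J_1,\dots$ with $|J_0|=1-\eta$ and $|J_j|=\eta2^{-j}$ for $j\ge1$. Let $\boldsymbol{F}^T=\boldsymbol{N}+\boldsymbol{R}$ have kernel $\epsilon+r\sum_{j\ge1}\mathbf{1}_{J_j}(u)\,|J_{j-1}|^{-1}\mathbf{1}_{J_{j-1}}(v)$. The operator $\boldsymbol{R}$ vanishes on functions with zero mean on every cell, and acts as $r$ times the unilateral shift on functions constant on the cells. Since the unilateral shift on $\ell_\infty$ has essential spectral radius $1$, we get $\rho_{ess}(\boldsymbol{F}^T)=\rho_{ess}(\boldsymbol{R})=\rho(\boldsymbol{R})=r$.

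For $|\lambda|>r$ we have $\lambda-\boldsymbol{F}^T=(\lambda-\boldsymbol{R})\bigl(\mathrm{Id}-(\lambda-\boldsymbol{R})^{-1}\boldsymbol{N}\bigr)$. Hence $\lambda\in\sigma(\boldsymbol{F}^T)$ iff $1=\epsilon\int_0^1(\lambda-\boldsymbol{R})^{-1}\mathbf{1}=\epsilon\sum_{k\ge0}\lambda^{-k-1}r^km_k$, where $m_k=\sum_{j\ge k}|J_j|$, because $\boldsymbol{R}^k\mathbf{1}=r^k\mathbf{1}_{\cup_{j\ge k}J_j}$. The modulus of the right side is $<\epsilon r^{-1}\sum_km_k=\epsilon(1+2\eta)/r$.

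With $\epsilon=0.6$, $\eta=0.1$, $r=0.9$ this bound equals $0.8<1$, so there is no spectrum outside $|\lambda|\le r$ and $\rho(\boldsymbol{F})=\rho_{ess}(\boldsymbol{F})=0.9$. Thus $\boldsymbol{F}\in\mathcal{R}$ and \eqref{eq:doeblin} holds with $\epsilon>\tfrac12$, yet $\boldsymbol{F}^T$ is not quasi-compact; its row sums are $1.5$ off $J_0$.

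To get a correct proof you must add a hypothesis such as $\operatorname*{ess\,sup}_u\int_0^1(F^T)^{(m)}(u,v)\,dv\le1$, or more generally $<2\epsilon$. With it, your argument is complete and coincides with the paper's.
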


We now present the first major result of the paper.

\begin{theorem} \label{thm:ExtUni}
For any $\boldsymbol{F}\in\mathcal{R}$ satisfying Assumption \ref{assumption:generalF} and any $X \in \mathcal{D}_T(L_1)$ such that $X(0) \geq 0$, there exists  a unique pair $(Y,Z) \in \mathcal{D}^\uparrow_T(L_1)\times\mathcal{D}_T(L_1)$ satisfying \eqref{eq:cond1op}--\eqref{eq:cond3op}.
\end{theorem}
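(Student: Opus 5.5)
We will work throughout with the diagonalized system of Definition \ref{rem:rephrase}. Proposition \ref{prop:OpDiag} supplies $\boldsymbol{D}=\operatorname{diag}(\delta)$ with $0<c\le\delta\le C<\infty$ a.e. and $\|\boldsymbol{\widetilde{P}}\|_{op}=\rho(\boldsymbol{F})<1$. Multiplication by $\delta$ (respectively $\delta^{-1}$) preserves several things at once: membership in $\mathcal{D}_T(L_1)$ (norms change by at most a factor $C$ or $c^{-1}$), nonnegativity, monotonicity in $t$, and the sets where a component vanishes. Consequently $(Y,Z)$ solves \eqref{eq:cond1op}--\eqref{eq:cond3op} for $X$ if and only if $(\boldsymbol{D}Y,\boldsymbol{D}Z)$ solves \eqref{eq:cond1opnew}--\eqref{eq:cond3opnew} for $\widetilde X=\boldsymbol{D}X$, and $\widetilde X(0)\ge0$. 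It therefore suffices to prove existence and uniqueness for a reflection operator $\boldsymbol{\widetilde P}\ge0$ with $\|\boldsymbol{\widetilde P}\|_{op}<1$. The point is that we obtain a one-step contraction, rather than the $k$-stage one of \cite{clarke2026growing}.

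\textbf{Existence.} Define on $\mathcal{D}_T^\uparrow(L_1)$ the map
\[
(\pi_{X,F}W)_u(t)=\sup_{0\le s\le t}\bigl[-\widetilde X_u(s)+(\boldsymbol{\widetilde P}W)_u(s)\bigr]^+ .
\]
The map is well defined. For each $u$, $s\mapsto(\boldsymbol{\widetilde P}W)_u(s)$ is càdlàg by dominated convergence, since $\int\widetilde P(u,v)\sup_t|W_v(t)|dv<\infty$ for a.e. $u$. Moreover, the running supremum of the positive part of a càdlàg function is nondecreasing, càdlàg and vanishes at $0$, because $\widetilde X(0)\ge0$ and $W(0)=0$. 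Its $\|\cdot\|_{T,1}$ norm is bounded by $\|\widetilde X\|_{T,1}+\|\boldsymbol{\widetilde P}\|_{op}\|W\|_{T,1}$. Using \eqref{eq:supproperty} pointwise in $u$, together with $|a^+-b^+|\le|a-b|$, gives
\[
\sup_t|(\pi W)_u(t)-(\pi W')_u(t)|\le\int_0^1\widetilde P(u,v)\sup_t|W_v(t)-W'_v(t)|\,dv .
\]
Integrating over $u$ then yields $\|\pi W-\pi W'\|_{T,1}\le\|\boldsymbol{\widetilde P}\|_{op}\|W-W'\|_{T,1}$.

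We need $\mathcal{D}_T^\uparrow(L_1)$ to be complete under $\|\cdot\|_{T,1}$. Take a Cauchy sequence; pass to a subsequence converging in $\|\cdot\|_{T,1}$ fast enough that, for a.e. $u$, it converges uniformly in $t$. The limit is then càdlàg and nondecreasing for a.e. $u$, and we redefine it on a null set. Handling this "for a.e. $u$ versus all $u$" issue is the main technical care. The definitions require each $f_u\in\mathcal{D}_T$, so we identify functions agreeing for a.e. $u$; the complementarity condition \eqref{eq:cond3op} is then read a.e. in $u$, or a representative is fixed by setting $W_u\equiv0$ on the exceptional null set. Banach's fixed point theorem now gives a unique fixed point $\widetilde Y$.

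Set $\widetilde Z=\widetilde X+(\boldsymbol{1}-\boldsymbol{\widetilde P})\widetilde Y$. Then $\widetilde Z_u(t)=\widetilde Y_u(t)-(-\widetilde X_u(t)+(\boldsymbol{\widetilde P}\widetilde Y)_u(t))\ge0$, since $\widetilde Y_u(t)$ dominates this quantity. Complementarity follows from the standard one-dimensional fact: $\widetilde Y_u$ is the one-dimensional Skorokhod regulator of the càdlàg path $\widetilde X_u-(\boldsymbol{\widetilde P}\widetilde Y)_u$, so it increases only at times where $\widetilde Z_u=0$.

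\textbf{Uniqueness.} Let $(\widetilde Y',\widetilde Z')$ be any solution. For each fixed $u$, the pair $(\widetilde Y'_u,\widetilde Z'_u)$ solves the one-dimensional Skorokhod problem with input $\widetilde X_u-(\boldsymbol{\widetilde P}\widetilde Y')_u$. By uniqueness of the one-dimensional regulator (the explicit running-supremum formula), $\widetilde Y'=\pi_{X,F}\widetilde Y'$. Uniqueness of the fixed point then forces $\widetilde Y'=\widetilde Y$, and so $\widetilde Z'=\widetilde Z$.

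Transforming back via $Y=\boldsymbol{D}^{-1}\widetilde Y$ and $Z=\boldsymbol{D}^{-1}\widetilde Z$ completes the proof. The main obstacle I expect is bookkeeping rather than ideas: checking that $\pi_{X,F}$ maps into $\mathcal{D}^\uparrow_T(L_1)$ with càdlàg paths for every $u$, and that completeness holds, both of which rest on the a.e. issues above. The contraction itself is immediate from Proposition \ref{prop:OpDiag}.
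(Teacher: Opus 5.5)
Your architecture is the paper's: diagonalize via Proposition \ref{prop:OpDiag}, use the one-step contraction of $\pi$, take the Banach fixed point, verify the three conditions, and show every solution is a fixed point of $\pi$. Your appeal to one-dimensional Skorokhod uniqueness, with $(\boldsymbol{\widetilde P}\widetilde Y')_u=\widetilde Y'_u+\widetilde X_u-\widetilde Z'_u$ automatically c{\`a}dl{\`a}g for every $u$, is a cleaner version of the paper's argument ($\hat Y\le\bar Y$ followed by a contradiction).

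The gap is exactly the point you flag as ``the main technical care'', and the resolutions you propose do not deliver the theorem. Banach's theorem in $\|\cdot\|_{T,1}$ only produces a class $\widetilde Y$ with $\widetilde Y_u=\pi(\widetilde Y)_u$ for a.e.\ $u$. Your verification of $\widetilde Z\ge0$ and of complementarity, however, uses this identity at each $u$, and the statement requires \eqref{eq:cond1op} and \eqref{eq:cond3op} for every $u\in[0,1]$, with uniqueness for every $u$. Reading complementarity a.e.\ in $u$ proves a weaker theorem. Taking an arbitrary representative, e.g.\ setting $\widetilde Y_u\equiv0$ on the exceptional null set $N$ as in your completeness step, breaks the fixed-point identity on $N$; there $\widetilde Z_u=\widetilde X_u-(\boldsymbol{\widetilde P}\widetilde Y)_u$ may well be negative. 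Likewise, in your uniqueness step, ``uniqueness of the fixed point forces $\widetilde Y'=\widetilde Y$'' only gives equality for a.e.\ $u$.

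The missing observation is the content of the paper's Lemma \ref{lem:pifixpoint}: $\pi(W)$ depends on $W$ only through its a.e.-class in $u$, because $W$ enters only via the integral $\boldsymbol{\widetilde P}W$. So if $\widetilde Y$ is the Banach fixed point, then $\widehat Y:=\pi(\widetilde Y)$ equals $\widetilde Y$ a.e. Hence $\pi(\widehat Y)=\pi(\widetilde Y)=\widehat Y$ at every $u$ and $t$. This is the representative to use, and your existence checks then hold for every $u$. Conversely, suppose $Y'$ and $Y''$ are fixed points at every $u$ and agree a.e. Then $Y'=\pi(Y')=\pi(Y'')=Y''$ everywhere. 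Combined with your one-dimensional argument, which shows that any solution is a fixed point of $\pi$ at every $u$, this gives uniqueness for every $u$.
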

Recall that we denote $\Phi_F(X):=Z$ and $\Psi_F(X):=Y$. The proof for this theorem  is in section \ref{sec:existunq} and will be approached by using at the diagonalized system.

\begin{remark}
The system \eqref{eq:cond1op}--\eqref{eq:cond3op} can be thought of as the following. For any free process $X$, the regulator $Y$ is the least such increasing process such that the resultant process $Z$ remains positive. The term ${\bf1}-\boldsymbol{F}$ acts as an oblique reflection, instead of the typical normal reflection, ensuring that whenever $Z_u(t)=0$, $Y(t)$ will increase accordingly to keep $Z(t)$ from becoming negative. A model of this kind can arise from the fluid limit of an open Markovian Jackson network as the number of servers increases to infinity.
More explicitly, we start with the queue length process $Q^N = \{Q^N_1,\dots,Q^N_N\} \in \mathcal{D}_T^N$ and idle time $I^N \in \mathcal{D}_T^N$, where $I_i(0)=0$, $dI_i(t)\ge0$, and $\int_0^\infty 1_{\{Q^N_i(t)>0\}} dI^N_i(t)=0$ for all $i\in[n]$. If one scales $Q^N$ by $\bar{Q}_i^N(t) := 
N^{-\alpha}Q_i^N(N^\alpha t)$, $\alpha>0$, then one can approximate $\bar{Q}^N$ by a function $\bar{Q}\in\mathcal{C}_T(L_1)$, as we show in \cite{clarke2026growing}. Here, we write $X(t) \coloneq \bar{Q}(0) + \lambda t - ({\bf1}-\boldsymbol{G}^T)\mu t$ with $Y(t) \coloneq \bar{Y}(t) \coloneq \operatorname{diag}(\mu)\bar{I}(t)$ and $Z(t)\coloneq \bar{Q}(t)$, where $\bar{Q}(0)$ represents the initial fluid limit of the queue-length process, $\lambda=(\lambda_u)_{u\in [0,1]}$ and $\mu = (\mu_u)_{u\in [0,1]}$ are the respective scaled arrival and service rate functions in $L_1([0,1])$ for the fluid process, $\boldsymbol{G}$ is the limiting transition operator arising from the routing matrix of the network, and $\bar{I}$ is the fluid limit of the scaled idle time process. For the limiting process it follows that $d\bar{I}_u(t)\geq0$, $\bar{I}_u(0) = 0 $ and $\int_0^\infty 1_{\{\bar{Q}_u(s)>0\}} \bar{I}_u(ds)=0$ for all $u\in[0,1]$. It is easy to verify that these properties satisfy conditions \eqref{eq:cond1op}--\eqref{eq:cond3op}.
\end{remark}

\begin{remark}
One should also note that the finite-dimensional Skorokhod problem may be written in the more general form 
(see, e.g.,  \cite{haddad2010pathwise,ChenYao2001fundamentals})
\begin{equation} \label{eq:skoMmat}
z(t) = x(t) + Ry(t),
\end{equation}
(see, e.g., \cite{haddad2010pathwise,ChenYao2001fundamentals})
where $R$ is a reflection matrix. It is assumed that $R$ is in the space of M-matrices (positive diagonal elements, non
positive off-diagonal elements, and a nonnegative inverse). Given such an $R$, one can write $R=(I-F)D$, where $D$ is a diagonal matrix with diagonal entries equal to those of $R$ and $F$ is a positive matrix with $\rho(F)<1$. Conversely, given a positive matrix $F$ with $\rho(F)<1$, it follows that $R$ is an M-matrix. In this way, solving the finite-dimensional Skorokhod problem the form \eqref{eq:skoMmat} is equivalent to solving the problem in the form \eqref{eq:origskorokhod}.
This relationship applies to our infinite-dimensional setting. We can instead define the Skorokhod problem via the equation
\begin{equation} \label{eq:skoMop}
Z(t) = X(t) + \boldsymbol{R}Y(t).
\end{equation}
Here, $\boldsymbol{R}\in\bar{\mathcal{R}}$, where $\bar{\mathcal{R}}\subset\boldsymbol{\mathcal{G}}$ is defined as the set of operators that can be written in the form $\boldsymbol{R} = ({\bf1}-\boldsymbol{F})\boldsymbol{D}$, where $\boldsymbol{F}\in\mathcal{R}$ and $\boldsymbol{D}$ is a strictly positive diagonal operator, analogous to the set of M-matrices in finite-dimension. Clearly by letting $\widetilde{Y}=\boldsymbol{D}Y$, we recover the system determined by \eqref{eq:cond1op}--\eqref{eq:cond3op} with $Y$ replaced by $\widetilde{Y}$.

One may also note that M-matrices can be equivalently defined as $R = sI - B$, where $B\ge0$ element-wise and $\rho(B)<s$. We can similarly define $\bar{\mathcal{R}}$ through $\boldsymbol{R} = s{\bf1}-\boldsymbol{B}$, where $\rho(\boldsymbol{B})<1$. In this case, letting $\boldsymbol{F} = \frac{1}{s}\boldsymbol{B}$ and writing $\widetilde{Y} = sY$ we see that in this form, \eqref{eq:skoMmat} is still equivalent to \eqref{eq:cond1op}.

\end{remark}

\bigskip

\section{Eigenfunctions of operators in $\mathcal{R}$ and Proof of Proposition \ref{prop:OpDiag}}
\label{sec:eigfun}

In this section we establish the necessary theory to ensure that the diagonalization argument established in Definition \ref{rem:rephrase} can be applied in a reconstructible manner; that is, for any results we achieve under the diagonalized system, we can revert to the initial system. In order to do this in a meaningful way one would hope that the diagonal operators are strictly bounded away from $0$ and infinity. We will show in this section that indeed Assumption \ref{assumption:generalF} achieves such conditions, by providing background theory and a proof to Proposition \ref{prop:OpDiag}. We begin with a generalization of the Krein-Rutman theorem that applies directly to the conditions of Assumption \ref{assumption:generalF}.

\begin{lemma}
\label{lem:kreinrutman}
Let $\mathcal{X} = L_{\infty}([0,1])$, $\mathcal{K} = \{f\in X\colon f\ge0 \mbox{ a.e}. \}$, and let $\mathcal{T} : \mathcal{X} \rightarrow \mathcal{X}$ be a bounded linear operator such that 
\begin{itemize}
    \item $\mathcal{T}$ is positive: $\mathcal{T}(\mathcal{K}) \subset \mathcal{K}$,
    \item $\mathcal{T}$ is quasi-compact: $\rho_{ess}(\mathcal{T}) < \rho(\mathcal{T})$, where $\rho_{ess}(\mathcal{T})$ is the essential spectral radius of $\mathcal{T}$,
    \item $\mathcal{T}$ is eventually strongly positive: for every $f\in \mathcal{K}\backslash \{0\}$, there exists $n\ge1$ such that $\mathcal{T}^{(n)}f\in \mbox{int}(\mathcal{K})$, where $\mbox{int}(\mathcal{K}) = \{f\in X\colon \operatorname*{ess\,inf} f>0\}$. 
\end{itemize}
Then $\rho(\mathcal{T})$ is an eigenvalue of $\mathcal{T}$ and there exists a unique (up to scaling) eigenfunction $\upsilon \in \mbox{int}(\mathcal{K})$ such that $\mathcal{T}\upsilon = \rho(\mathcal{T})\upsilon$. Furthermore, $\upsilon$ is strictly bounded above $0$ in the sense that $ \operatorname*{ess\,inf}_{u\in[0,1]}\upsilon(u) >0$.
\end{lemma}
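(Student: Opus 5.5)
Our plan is to follow the standard Krein--Rutman strategy: use quasi-compactness to produce a peripheral eigenvalue and a positive eigenvector, and then use eventual strong positivity to push that eigenvector into the interior of the cone and to obtain uniqueness. First we record that $\rho(\mathcal{T})>0$. This is implicit in quasi-compactness, since $\rho_{ess}(\mathcal{T})\ge0$. Next we note that $\mathcal{K}$ is a normal, generating cone with nonempty interior in $L_\infty$: the constant function $\mathbf{1}$ lies in $\operatorname{int}(\mathcal{K})$, and $\|f\|_\infty\le c$ exactly when $-c\mathbf{1}\le f\le c\mathbf{1}$. For a positive operator on such a space, a Pringsheim-type theorem applies to the resolvent $R(\lambda)=\sum_n \lambda^{-n-1}\mathcal{T}^{(n)}$: the spectral radius belongs to the spectrum. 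Since $\rho(\mathcal{T})>\rho_{ess}(\mathcal{T})$, every spectral point of modulus $\rho(\mathcal{T})$ is an isolated eigenvalue of finite algebraic multiplicity and a pole of the resolvent. In particular $r:=\rho(\mathcal{T})$ is a pole of some order $k\ge1$.

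Existence of a positive eigenvector comes from the leading Laurent coefficient. Write $Q=\lim_{\lambda\downarrow r}(\lambda-r)^k R(\lambda)$. For real $\lambda>r$ we have $R(\lambda)\ge0$, so $Q$ is a positive operator, and $Q\neq0$. Moreover $(\mathcal{T}-r)Q=0$, so every nonzero element of $Q(\mathcal{K})$ is an eigenvector in $\mathcal{K}$. Because $\mathcal{K}$ is generating and $Q\ne0$, there is some $g\in\mathcal{K}$ with $\upsilon:=Qg\neq0$. This gives $\upsilon\in\mathcal{K}\setminus\{0\}$ with $\mathcal{T}\upsilon=r\upsilon$.

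Interiority and the lower bound then follow quickly. By eventual strong positivity there is $n$ with $\mathcal{T}^{(n)}\upsilon\in\operatorname{int}(\mathcal{K})$. Since $\mathcal{T}^{(n)}\upsilon=r^n\upsilon$ and $r>0$, we get $\upsilon\in\operatorname{int}(\mathcal{K})$, that is, $\operatorname*{ess\,inf}\upsilon>0$.

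The remaining step, uniqueness up to scaling, is the main obstacle, because the standard argument does not work as stated. Suppose $w$ is another eigenvector for $r$; we may take it real by splitting into real and imaginary parts, since $\mathcal{T}$ is real. Because $\upsilon$ is interior, we can set $t^*=\sup\{t:\upsilon-tw\in\mathcal{K}\}$, which is finite and positive after replacing $w$ by $-w$ if necessary. Then $h=\upsilon-t^*w\in\mathcal{K}$ and $\mathcal{T}h=rh$. If $h\ne0$, strong positivity gives $h\in\operatorname{int}(\mathcal{K})$, so $\upsilon-(t^*+\varepsilon)w\in\mathcal{K}$ for small $\varepsilon>0$, contradicting maximality of $t^*$. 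Hence $h=0$, and $w$ is a multiple of $\upsilon$.

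The gap is that $n$ in the strong positivity hypothesis depends on $f$, and I would need to check that this pointwise version suffices. It does here: we apply it only to the single eigenvector $h$, and $\mathcal{T}^{(n)}h=r^nh$ converts interiority of $\mathcal{T}^{(n)}h$ back into interiority of $h$. If $w$ were complex with $\operatorname{Re} w$ and $\operatorname{Im} w$ both eigenvectors, the argument applies to each part separately, which completes uniqueness.
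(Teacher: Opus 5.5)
Your proof is correct and more complete than the paper's. The paper's proof has two steps. It cites Nussbaum's Corollary 2.2 (a Krein--Rutman theorem under quasi-compactness) as a black box to get a nonnegative $\upsilon$ with $\mathcal{T}\upsilon=\rho(\mathcal{T})\upsilon$. It then uses eventual strong positivity exactly as you do to conclude $\operatorname*{ess\,inf}\upsilon>0$.

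You instead prove the existence step directly. You use the Pringsheim property of positive operators on $L_\infty$ and the fact that $\rho(\mathcal{T})$ is a pole of the resolvent once $\rho(\mathcal{T})>\rho_{ess}(\mathcal{T})$. You then take $\upsilon=Qg$, where $Q$ is the positive, nonzero leading Laurent coefficient. This is essentially what the cited corollary provides in this setting, so your version gains self-containedness at the cost of length.

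The more substantive difference is uniqueness. The statement asserts that the interior eigenfunction is unique up to scaling, but the paper's proof never addresses this. Your $t^*=\sup\{t:\upsilon-tw\in\mathcal{K}\}$ argument supplies it. It actually proves more: the whole eigenspace of $\rho(\mathcal{T})$ is one-dimensional, including after splitting complex eigenvectors into real and imaginary parts.

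Your check that the $f$-dependence of $n$ in the strong positivity hypothesis is harmless is also right. Strong positivity is only ever applied to a single eigenvector $h$, and $\mathcal{T}^{(n)}h=\rho(\mathcal{T})^n h$ with $\rho(\mathcal{T})>0$ transfers interiority back to $h$.
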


\begin{proof}
First, applying \cite[Corollary 2.2]{nussbaum1980kreinrutman}, as $\mathcal{T}$ is quasi-compact,  we are guaranteed that there exists an eigenfunction $\upsilon$ such that $\mathcal{T}\upsilon = \rho(\mathcal{T})\upsilon$ and $\upsilon \ge 0$ a.e.  Then, by eventual strong positivity,
$$\operatorname*{ess\,inf}_{u\in[0,1]}\,(\mathcal{T}^{(n)}\upsilon)(u) >0 \implies \rho(\mathcal{T})^n\operatorname*{ess\,inf}_{u\in[0,1]}\upsilon(u) >0\,,$$
and so $\operatorname*{ess\,inf}_{u\in[0,1]}\upsilon(u) > 0$ a.e. as required.
\end{proof}

Note that by definition, $\boldsymbol{F}\in\mathcal{R}$ acts on $L_1([0,1])$ and its transpose (or adjoint) $\boldsymbol{F}^T$ acts on $L_{\infty}([0,1])$.
It is then clear to see that for any $\boldsymbol{F}\in\mathcal{R}$ satisfying Assumption \ref{assumption:generalF}, its transpose $\boldsymbol{F}^T$ satisfies the conditions of Lemma \ref{lem:kreinrutman}.

\begin{lemma}
\label{lem:diagdef}
Let $\boldsymbol{F}$ be an integral operator and $\boldsymbol{D} = \operatorname{diag}(\delta)$ be a diagonal operator.
Then the operator $\boldsymbol{P} \coloneq \boldsymbol{D}^{-1}\boldsymbol{F}\boldsymbol{D}$ satisfies 
\begin{equation*}
(\boldsymbol{P}f)(u) = \int_0^1 P(u,v)f(v) \, dv\,,
 \end{equation*}
where
\begin{equation*}
 P(u,v) \coloneq \frac{F(u,v)\delta(v)}{\delta(u)}.
 \end{equation*}
\end{lemma}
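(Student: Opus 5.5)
This is a direct computation: compose the three operators in order, read off the resulting integral kernel, and check it against the definition \eqref{eq:Operator}. I will fix an arbitrary $f:[0,1]\to\mathbb{R}$ for which the expressions make sense, for instance $f\in L_1([0,1])$ or $L_\infty([0,1])$ with $\delta$ strictly positive so that $\boldsymbol{D}^{-1}$ is defined. I then apply the operators one at a time from the right.

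First, by the definition of a diagonal operator, $(\boldsymbol{D}f)(v)=\delta(v)f(v)$. Next, applying \eqref{eq:Operator} to the function $\boldsymbol{D}f$ gives
\begin{equation*}
(\boldsymbol{F}\boldsymbol{D}f)(u)=\int_0^1 F(u,v)\delta(v)f(v)\,dv .
\end{equation*}
Finally, $\boldsymbol{D}^{-1}$ divides pointwise by $\delta(u)$. Since $\delta(u)$ does not depend on the integration variable $v$, it can be moved inside the integral:
\begin{equation*}
(\boldsymbol{D}^{-1}\boldsymbol{F}\boldsymbol{D}f)(u)=\frac{1}{\delta(u)}\int_0^1 F(u,v)\delta(v)f(v)\,dv=\int_0^1 \frac{F(u,v)\delta(v)}{\delta(u)}f(v)\,dv .
\end{equation*}
Comparing this with \eqref{eq:Operator} identifies $P(u,v)=F(u,v)\delta(v)/\delta(u)$ as the kernel of $\boldsymbol{P}$.

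The only point needing care is that each step is well defined, which amounts to the integrals being finite. When $\delta$ is bounded above and below, as will be the case for the $\boldsymbol{D}$ of Proposition \ref{prop:OpDiag}, $\boldsymbol{D}f$ lies in the same space as $f$, so $\boldsymbol{F}\boldsymbol{D}f$ is defined wherever $\boldsymbol{F}f$ is. In general the identity holds at every $u$ where $\delta(u)>0$ and the integral converges. I regard this as a routine verification rather than a real obstacle.
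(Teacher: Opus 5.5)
Your proposal is correct and follows the paper's proof exactly: apply $\boldsymbol{D}$, then $\boldsymbol{F}$, then $\boldsymbol{D}^{-1}$, and move the factor $1/\delta(u)$ inside the integral to read off the kernel $F(u,v)\delta(v)/\delta(u)$. Your remarks on well-definedness (requiring $\delta>0$, with $\delta$ bounded above and below in the intended application) are extra care that the paper leaves implicit.
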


\begin{proof}
Using the properties of linear and diagonal operators, we obtain
\begin{align*}
(\boldsymbol{P}f)(u) = (\boldsymbol{D}^{-1}\boldsymbol{F}\boldsymbol{D}f)(u) =  \frac{1}{\delta(u)} \int_0^1 F(u,v) \delta(v) f(v) \, dv = \int_0^1 P(u,v)f(v) \, dv.
\end{align*}
\end{proof}

We now have the necessary results to prove Proposition \ref{prop:OpDiag}. 

\begin{proof}[Proof of Proposition \ref{prop:OpDiag}]
From Lemma \ref{lem:kreinrutman} we know that $\boldsymbol{F}^T$ has an eigenfunction $\delta$ such that $\operatorname*{ess\,inf}_{u\in[0,1]}\delta(u)>0$ and $\boldsymbol{F}^T\delta = \rho(\boldsymbol{F}^T)\delta=\rho(\boldsymbol{F})\delta$. Since $\boldsymbol{F}^T$ acts on $L_\infty([0,1])$ we also know that $\operatorname*{ess\,sup}_{u\in[0,1]}\delta(u)<\infty$.
By Lemma \ref{lem:diagdef} applied to $\boldsymbol{F}^T$, the kernel function associated with $\boldsymbol{P}$ satisfies $P(u,v) = \frac{F(v,u)\delta(v)}{\delta(u)}$. Putting this together, it follows that for any $u\in[0,1]$,
$$ \int_0^1 P(u,v) \, dv =  \frac{1}{\delta(u)} \int_0^1 F(v,u) \delta(v) \, dv = \frac{1}{\delta(u)} (\boldsymbol{F}^T \delta)(u) = \frac{1}{\delta(u)} \rho(\boldsymbol{F}) \delta(u)  = \rho(\boldsymbol{F})<1\,.$$
To prove the bound on the norm of $\boldsymbol{\widetilde{P}}$, we show that $\boldsymbol{\widetilde{P}} = \boldsymbol{P}^T$. Indeed, it is clear that $\boldsymbol{\widetilde{P}}$ has kernel $\widetilde{P}(u,v) = \frac{F(u,v)\delta(u)}{\delta(v)}$ and so
$$(\boldsymbol{\widetilde{P}}f)(u) = \int_0^1\frac{F(u,v)\delta(u)}{\delta(v)}f(v)\,dv = \int_0^1P(v,u)f(v)\,dv = (\boldsymbol{P}^Tf)(u).$$
Using \eqref{eq:diagnorm} we see that
    \begin{align*}
        &\int_0^1 \widetilde{P}(u,v) \, du = \int_0^1 P(u,v) \, dv = \rho(\boldsymbol{F})<1. 
    \end{align*}
    Finally, note that since $\boldsymbol{\widetilde{P}}$ is positive,
    \begin{align*}
    \|\boldsymbol{\widetilde{P}}\|_{op} = \sup_{v \in [0,1]}\int_0^1|\widetilde{P}(u,v)|du = \sup_{v \in [0,1]}\int_0^1\widetilde{P}(u,v)du = \rho(\boldsymbol{F}). 
    \end{align*}
    This completes the proof.
\end{proof}

\section{Proof of Existence and Uniqueness of the Reflection Map} \label{sec:existunq}

In this section, we will introduce necessary results to prove Theorem \ref{thm:ExtUni}, utilizing the property that the diagonalized system has an operator $\boldsymbol{\widetilde{P}}$ such that $\|\boldsymbol{\widetilde{P}}\|_{op}<1$. This avoids any work involving $k$-fold composition that was required in the methodology of \cite{clarke2026growing}, for which we only knew that $\|\boldsymbol{F}^{(k)}\|_{op}<1$ for some $k\ge1$. 
For ease of notation in this section, the lemmas are proved for $\boldsymbol{F}\in\widetilde{\mathcal{R}}$, although it should be understood that for the proof of Theorem \ref{thm:ExtUni} these lemmas will apply to the operator $\boldsymbol{\widetilde{P}}\in\widetilde{\mathcal{R}}$ of the diagonalized system.

\bigskip

Before proving Theorem \ref{thm:ExtUni} we first introduce the mapping $\pi_{X,F}$ which acts as a cumulative correction term for candidate regulators $Y\in\mathcal{D}^\uparrow_T(L_1)$. 

\begin{definition}
For a given $X\in \mathcal{D}_T(L_1)$ and $\boldsymbol{F}\in\mathcal{R}$, define the mapping $\pi_{X,F}:\mathcal{D}^\uparrow_T(L_1)\to\mathcal{D}^\uparrow_T(L_1)$ by
\begin{equation} \label{eq:pidef}
\pi_{X,F}(W)(t) = \sup_{0\leq s \leq t}[-X(s) + \boldsymbol{F}W(s)]^+\,.
\end{equation} 
More specifically, for each $u \in [0,1]$,
\begin{equation} \label{eq:pidefu}
\pi_{X,F}(W)_u(t) = \sup_{0\leq s \leq t}[-X_u(s) + (\boldsymbol{F}W)_u(s)]^+\,.
\end{equation}
\end{definition}

\begin{lemma} \label{lem:picontraction}
For any $\boldsymbol{F}\in\widetilde{\mathcal{R}}$ and $X\in\mathcal{D}_T(L_1)$, $\pi_{X,F}$ is a contraction mapping on $\mathcal{D}^\uparrow_T(L_1)$. That is, for any $Y^1,Y^2 \in \mathcal{D}^\uparrow_T$, 
$$\|\pi_{X,F}(Y^1)-\pi_{X,F}(Y^2)\|_{T,1} \leq \|\boldsymbol{F}\|_{op}\|Y^1-Y^2\|_{T,1}$$
where $\|\boldsymbol{F}\|_{op}<1$. 
\end{lemma}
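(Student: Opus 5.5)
The plan is a pointwise comparison in $u$ followed by integration over $[0,1]$. It splits into three steps: check that $\pi_{X,F}$ maps $\mathcal{D}^\uparrow_T(L_1)$ into itself, prove the contraction estimate, and then handle the interchange of supremum and integral, which is the only real obstacle.

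\textbf{Well-definedness.} First I check that $\pi_{X,F}(W)\in\mathcal{D}^\uparrow_T(L_1)$ for $W\in\mathcal{D}^\uparrow_T(L_1)$.

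\begin{itemize}
\item Monotonicity and nonnegativity are immediate from the running supremum of a positive part.
\item For fixed $u$, the map $s\mapsto -X_u(s)+(\boldsymbol{F}W)_u(s)$ is c\`adl\`ag. For $(\boldsymbol{F}W)_u$ this follows by dominated convergence in $v$, using $|W_v(s)|\le \sup_t|W_v(t)|\in L_1$ and boundedness of the kernel slices where needed. The running supremum of a c\`adl\`ag function is again c\`adl\`ag.
\item Finiteness of the norm follows from $\sup_t|\pi_{X,F}(W)_u(t)|\le \sup_t|X_u(t)|+\int_0^1F(u,v)\sup_t|W_v(t)|\,dv$. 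Integrating in $u$ and using Fubini gives $\|\pi_{X,F}(W)\|_{T,1}\le\|X\|_{T,1}+\|\boldsymbol{F}\|_{op}\|W\|_{T,1}$.
\end{itemize}

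\textbf{Contraction estimate.} Fix $u\in[0,1]$ and $t\in[0,T]$.

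\begin{enumerate}
\item By \eqref{eq:supproperty} together with the $1$-Lipschitz property of $x\mapsto[x]^+$,
\begin{equation*}
|\pi_{X,F}(Y^1)_u(t)-\pi_{X,F}(Y^2)_u(t)|\le \sup_{0\le s\le t}\big|(\boldsymbol{F}(Y^1-Y^2))_u(s)\big|.
\end{equation*}
The $X$ terms cancel here, since the same $X$ enters both maps.
\item Since $F\ge0$, the right-hand side is bounded by $\int_0^1 F(u,v)\sup_{0\le s\le T}|Y^1_v(s)-Y^2_v(s)|\,dv$.
\item Taking the supremum over $t$ and integrating over $u$, Tonelli lets me swap the order of integration. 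This gives
\begin{equation*}
\|\pi_{X,F}(Y^1)-\pi_{X,F}(Y^2)\|_{T,1}\le \int_0^1\Big(\int_0^1F(u,v)\,du\Big)\sup_{s}|Y^1_v(s)-Y^2_v(s)|\,dv\le \|\boldsymbol{F}\|_{op}\|Y^1-Y^2\|_{T,1}.
\end{equation*}
This is precisely the inequality $\|\boldsymbol{F}_1f\|_{T,1}\le\|\boldsymbol{F}_1\|_{op}\|f\|_{T,1}$ cited from \cite{clarke2026growing}, and I can invoke it directly.
\end{enumerate}

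Because $\boldsymbol{F}\in\widetilde{\mathcal{R}}$, we have $\|\boldsymbol{F}\|_{op}<1$, so $\pi_{X,F}$ is a contraction.

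\textbf{Main obstacle.} The one step needing care is the interchange $\sup_s|\int F(u,v)g_v(s)\,dv|\le\int F(u,v)\sup_s|g_v(s)|\,dv$ and the measurability of $u\mapsto\sup_t|\cdot|$. The inequality itself is trivial pointwise. For measurability, I would use right-continuity in $t$ to restrict suprema to rational times, which makes each $u\mapsto\sup_t|\cdot|$ a countable supremum of measurable functions. Beyond that, the argument is routine.
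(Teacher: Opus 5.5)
Your argument is correct and is essentially the paper's proof: the pointwise bound from \eqref{eq:supproperty} and the $1$-Lipschitz positive part, followed by $\|\boldsymbol{F}g\|_{T,1}\le\|\boldsymbol{F}\|_{op}\|g\|_{T,1}$, which you re-derive via positivity of the kernel and Tonelli. Your extra well-definedness and measurability checks go beyond what the paper writes, but the hypotheses only give $\sup_v\int_0^1F(u,v)\,du<1$; without an extra assumption on the kernel slices $F(u,\cdot)$, finiteness and the c\`adl\`ag property of $(\boldsymbol{F}W)_u$ are guaranteed only for a.e.\ $u$, not every $u$.
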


\begin{proof}
The proof is given as follows:
\begin{align*}
\|\pi_{X,F}(Y^1)-\pi_{X,F}(Y^2)\|_{T,1} & = \int_0^1\sup_{t\in[0,T]}|\pi_{X,F}(Y^1)_u(t)-\pi_{X,F}(Y^2)_u(t)|du\\
&\leq \int_0^1\sup_{t\in[0,T]}\sup_{s\in[0,t]}|(\boldsymbol{F}(Y^1-Y^2))_u(s)|du \\
&= \int_0^1\sup_{t\in[0,T]}|(\boldsymbol{F}(Y^1-Y^2))_u(t)|du\\
&=\|\boldsymbol{F}(Y^1)-\boldsymbol{F}(Y^2)\|_{T,1}\\
&\leq \|\boldsymbol{F}\|_{op}\|Y^1-Y^2\|_{T,1}\,,
\end{align*}
where we use \eqref{eq:supproperty} in the second line.
\end{proof}

\begin{remark}
In the finite-dimensional setting, the above result can be proved similarly, see \cite[Theorem 1]{HarrisonReiman1981reflected}. In finite-dimension, this is sufficient to show that a unique fixed point exists where uniqueness means for every $t\in[0,T]$ and every $i=1,\dots N$, where the function space is $\mathcal{D}_T^N$. This will not work here since in our case, uniqueness in the function space $\mathcal{D}_T(L_1)$ under the metric $\|\cdot\|_{T,1}$  does not imply uniqueness for all $t\in[0,T]$, but only uniqueness for almost every $u\in[0,1]$. This is not sufficient to prove uniqueness of the system \eqref{eq:cond1op}--\eqref{eq:cond3op}, which holds for all $u\in[0,1]$. We therefore require a result which improves upon almost everywhere uniqueness to everywhere. This is presented in the next lemma.
\end{remark}

\begin{lemma} \label{lem:pifixpoint}
For any $\boldsymbol{F}\in\widetilde{\mathcal{R}}$ and $X\in\mathcal{D}_T(L_1)$, $\pi_{X,F}$ has a unique fixed point. That is, there exists a unique $Y\in\mathcal{D}_T(L_1)$ such that $Y = \pi_{X,F}(Y)$, where uniqueness holds for every $u\in[0,1]$ and $t\in[0,T]$.
\end{lemma}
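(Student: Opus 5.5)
Existence follows from Lemma \ref{lem:picontraction} via Picard iteration. Uniqueness is first obtained in $\|\cdot\|_{T,1}$ and then upgraded to hold for every $u$ by exploiting the fact that $\boldsymbol{F}$ is an integral operator.

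\emph{Existence.} Set $Y^0 \equiv 0$ and $Y^{n+1} = \pi_{X,F}(Y^n)$. By Lemma \ref{lem:picontraction},
\[
\|Y^{n+1}-Y^n\|_{T,1}\le \|\boldsymbol{F}\|_{op}^n\|Y^1-Y^0\|_{T,1},
\]
so $(Y^n)$ is Cauchy in $\|\cdot\|_{T,1}$. Two points need checking.

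First, $\pi_{X,F}$ maps into $\mathcal{D}^\uparrow_T(L_1)$. The function $-X_u+(\boldsymbol{F}W)_u$ is càdlàg for each fixed $u$: $X_u\in\mathcal{D}_T$, and $(\boldsymbol{F}W)_u(t)=\int_0^1F(u,v)W_v(t)\,dv$ is càdlàg in $t$ by dominated convergence, using the monotonicity of $W_v$ and the bound $\sup_t|W_v(t)|\in L_1$. I will assume the kernel $F(u,\cdot)$ is bounded, or at least that this integral is finite for every $u$. The running supremum of a càdlàg function is again càdlàg and nondecreasing.

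Second, the limit must be identified pointwise. Since the iterates are monotone, I would avoid completeness arguments and use monotonicity directly. $\pi_{X,F}$ is order preserving because $\boldsymbol{F}\ge0$, and $Y^1\ge Y^0=0$. Induction then gives $Y^{n+1}\ge Y^n$ for every $u,t$. Define $Y_u(t)=\lim_n Y^n_u(t)$ for every $u,t$. This limit is finite in $L_1$ because $\int_0^1\sup_t Y_u(t)\,du\le\sum_n\|Y^{n+1}-Y^n\|_{T,1}<\infty$ by monotone convergence. For each fixed $u$, monotone convergence also gives $(\boldsymbol{F}Y^n)_u(s)\uparrow(\boldsymbol{F}Y)_u(s)$ for all $u,s$, wherever the limit is finite. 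A supremum of increasing limits equals the limit of the suprema, so $Y_u(t)=\pi_{X,F}(Y)_u(t)$ for every $u,t$.

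Càdlàg regularity of $Y_u$ can then be read off the fixed-point identity: once $Y\in\mathcal{D}_T(L_1)$ holds in the $L_1$ sense, the right-hand side $\pi_{X,F}(Y)_u$ is a running supremum of a càdlàg function. To make this non-circular, I would first note that the monotone limit $Y_v$ is nondecreasing with $\sup_t|Y_v(t)|\in L_1(dv)$. For a nondecreasing $W$ with this integrability, $(\boldsymbol{F}W)_u$ is already càdlàg by dominated convergence, without assuming $W_v$ is càdlàg. Hence $\pi_{X,F}(Y)_u$ is càdlàg, and so is $Y_u$.

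\emph{Uniqueness.} Let $Y$ and $\hat Y$ be two fixed points. Lemma \ref{lem:picontraction} gives
\[
\|Y-\hat Y\|_{T,1}\le\|\boldsymbol{F}\|_{op}\|Y-\hat Y\|_{T,1},
\]
and since $\|\boldsymbol{F}\|_{op}<1$, we get $\|Y-\hat Y\|_{T,1}=0$. Thus $Y_v(t)=\hat Y_v(t)$ for all $t$ and for a.e. $v$; this is the "almost every $u$" statement the preceding remark warns about. The upgrade to every $u$ is the key step. Because $\boldsymbol{F}$ acts by integration in $v$, changing $Y$ on a $v$-null set does not alter $(\boldsymbol{F}Y)_u(s)$ for any $u$. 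Therefore $(\boldsymbol{F}Y)_u(s)=(\boldsymbol{F}\hat Y)_u(s)$ for every $u\in[0,1]$ and $s\in[0,T]$. Applying the fixed-point identity once more,
\[
Y_u(t)=\sup_{s\le t}[-X_u(s)+(\boldsymbol{F}Y)_u(s)]^+=\sup_{s\le t}[-X_u(s)+(\boldsymbol{F}\hat Y)_u(s)]^+=\hat Y_u(t)
\]
for every $u$ and $t$.

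\emph{Main obstacle.} The delicate part is existence. Completeness in $\|\cdot\|_{T,1}$ alone only produces an a.e. object, so I rely on the monotone pointwise construction above to obtain a fixed point defined everywhere with càdlàg paths. The upgrade from a.e. to everywhere uniqueness is then the short "one more application of $\pi_{X,F}$" argument.
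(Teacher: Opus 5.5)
Your uniqueness argument is the paper's: the contraction gives $\|Y-\hat Y\|_{T,1}=0$. One more application of $\pi_{X,F}$ then upgrades a.e.\ equality to equality for every $u$, because $\pi_{X,F}$ sees $Y$ only through the integral $\boldsymbol{F}Y$.

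For existence the paper uses that same trick rather than monotone iteration. It takes the Banach fixed point $Y$, which is a fixed point only for a.e.\ $u$, and sets $\widetilde Y:=\pi_{X,F}(Y)$. Since $\widetilde Y_u=Y_u$ for a.e.\ $u$, we get $\pi_{X,F}(\widetilde Y)=\pi_{X,F}(Y)=\widetilde Y$ for every $u$. Moreover $\widetilde Y$ has c{\`a}dl{\`a}g paths simply because $\pi_{X,F}$ maps into $\mathcal{D}^\uparrow_T(L_1)$. Your caveat that $(\boldsymbol{F}W)_u$ must be finite for every $u$ is fair; the paper tacitly assumes the same when it declares $\pi_{X,F}\colon\mathcal{D}^\uparrow_T(L_1)\to\mathcal{D}^\uparrow_T(L_1)$.

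Your monotone route is a legitimate alternative, but its regularity step has a gap. You assert that for nondecreasing $W$ with $\sup_t|W_v(t)|\in L_1(dv)$, $(\boldsymbol{F}W)_u$ is c{\`a}dl{\`a}g by dominated convergence ``without assuming $W_v$ is c{\`a}dl{\`a}g.'' That is false. Dominated convergence only gives $(\boldsymbol{F}W)_u(t+h)\to\int_0^1F(u,v)W_v(t+)\,dv$ as $h\downarrow0$. Monotonicity supplies one-sided limits, but not $W_v(t+)=W_v(t)$.

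Increasing limits of c{\`a}dl{\`a}g paths really can lose right-continuity. The functions $f_n(t)=\min(1,nt)$ increase to the function that is $0$ at $t=0$ and $1$ for $t>0$. Take $W_v$ equal to this limit for every $v$, and $X\equiv0$. Then both $(\boldsymbol{F}W)_u$ and $\pi_{X,F}(W)_u$ equal $\int_0^1F(u,v)\,dv$ for $t>0$ and $0$ at $t=0$, so neither is right-continuous. As written, you have not shown that your pointwise limit lies in $\mathcal{D}^\uparrow_T(L_1)$.

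The missing ingredient is a.e.\ uniform convergence of the iterates, and your geometric bound already gives it. By monotone convergence,
\[
\int_0^1\sum_n\sup_t|Y^{n+1}_v(t)-Y^n_v(t)|\,dv\le\|Y^1\|_{T,1}\sum_n\|\boldsymbol{F}\|_{op}^n<\infty .
\]
Hence for a.e.\ $v$ we have $Y^n_v\to Y_v$ uniformly on $[0,T]$, so $Y_v$ is c{\`a}dl{\`a}g for a.e.\ $v$. That is exactly what dominated convergence needs to make $(\boldsymbol{F}Y)_u$ c{\`a}dl{\`a}g for every $u$, using the envelope $F(u,v)\sup_tY_v(t)$ under your kernel assumption.

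The fixed-point identity, which holds in $[0,\infty]$, then makes $Y_u$ c{\`a}dl{\`a}g and also finite for every $u$. Finiteness needs saying too, since your $L_1$ bound only gives $Y_u<\infty$ for a.e.\ $u$.

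This a.e.-uniform-convergence argument is exactly what underlies completeness of $(\mathcal{D}_T(L_1),\|\cdot\|_{T,1})$, which the paper uses implicitly when it invokes Banach. So the monotone route does not really avoid a completeness-type argument. Once repaired, though, it gives extra information: the fixed point is the increasing pointwise limit of the Picard iterates started at $0$.
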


\begin{proof}
Applying the Banach fixed point theorem to Lemma \ref{lem:picontraction}, there exists a unique $Y\in\mathcal{D}_T(L_1)$ with respect to the norm $\|\cdot\|_{T,1}$ such that $Y_u = \pi_{X,F}(Y)_u$ for a.e.\ $u\in[0,1]$.
Let $\widetilde{Y} := \pi_{X,F}(Y)$. 
Then $\widetilde{Y}_u = Y_u$ for a.e.\ $u\in[0,1]$.
Hence, $\pi_{X,F}(\widetilde{Y})_u = \pi_{X,F}(Y)_u$ for every $u\in[0,1]$.
Therefore, $\widetilde{Y} = \pi_{X,F}(\widetilde{Y})$, namely $\widetilde{Y}$ is a fixed point of $\pi_{X,F}$.

Now suppose $\widehat{Y}$ is also a fixed point of $\pi_{X,F}$.
By uniqueness of $Y$, we must have $\widehat{Y}_u = Y_u$ for a.e.\ $u\in[0,1]$.
Hence, $\pi_{X,F}(\widehat{Y})_u = \pi_{X,F}(Y)_u$ for every $u\in[0,1]$.
Therefore, $\widehat{Y} = \pi_{X,F}(\widehat{Y}) = \pi_{X,F}(Y) = \widetilde{Y}$.
This means $\widetilde{Y}$ is the unique fixed point of $\pi_{X,F}$.
\end{proof}

We now have the necessary results to prove existence and uniqueness of the solution to the infinite-dimensional Skorokhod problem. The following proof is adapted from \cite{HarrisonReiman1981reflected} and further developed for the infinite-dimensional case.

\begin{proof}[Proof of Theorem \ref{thm:ExtUni}]
By using the diagonalized system from Definition \ref{rem:rephrase} where $\boldsymbol{\widetilde{P}}\in\widetilde{\mathcal{R}}$ as guaranteed by Proposition \ref{prop:OpDiag}, we can consider the system \eqref{eq:cond1opnew}--\eqref{eq:cond3opnew}.
By proving existence and uniqueness of $\widetilde{Y}$ and $\widetilde{Z}$ for a given free process $\widetilde{X} \coloneq \boldsymbol{D}X$ and operator $\boldsymbol{\widetilde{P}}$, we can recover the original theorem taking $Y \coloneq \boldsymbol{D}^{-1}\widetilde{Y}$ and $Z \coloneq \boldsymbol{D}^{-1}\widetilde{Z}$.  
\\ \\
We first prove existence by showing that a solution to \eqref{eq:cond1opnew}--\eqref{eq:cond3opnew} is given by the unique fixed point $\widetilde{Y} = \pi_{\widetilde{X},\widetilde{P}}(\widetilde{Y})$ guaranteed by Lemma \ref{lem:pifixpoint} and then correspondingly, $\widetilde{Z}(t) = \widetilde{X}(t) +  ({\bf1}-\boldsymbol{\widetilde{P}}) \widetilde{Y}(t)$.

Let $\widetilde{Y}$ be the fixed point. 
Condition \eqref{eq:cond1opnew} is satisfied since 
\begin{align*}
\widetilde{Z}(t)
& = \widetilde{X}(t)-\boldsymbol{\widetilde{P}}\widetilde{Y}(t)+\widetilde{Y}(t) \\
&= \widetilde{X}(t)-\boldsymbol{\widetilde{P}}\widetilde{Y}(t)+\sup_{0\leq s \leq t}[-\widetilde{X}(s) + \boldsymbol{\widetilde{P}}\widetilde{Y}(s)]^+ \geq 0\,. 
\end{align*}

Condition \eqref{eq:cond2opnew} is also satisfied since clearly $d\pi_{\widetilde{X},\widetilde{P}}(\widetilde{Y})(t) \geq 0$ and $\widetilde{Y}(0) = [-\widetilde{X}(0) + \boldsymbol{\widetilde{P}}\widetilde{Y}(0)]^+$ is satisfied when $\widetilde{Y}(0) = 0$. Since $\pi_{\widetilde{X},\widetilde{P}} \colon \mathcal{D}^\uparrow_0(L_1) \to \mathcal{D}^\uparrow_0(L_1)$ is a contraction map (taking $T=0$ in Lemma \ref{lem:picontraction}), the uniqueness of $\widetilde{Y}(0)$ guarantees that $\widetilde{Y}(0)=0$. 

Finally, to see \eqref{eq:cond3opnew}, we can check both cases. For each $u \in [0,1]$, if $d\widetilde{Y}_u(t) > 0$, the supremum in $\pi(\widetilde{Y})_u(t)$ must be attained at $t$ and so $\widetilde{Z}_u(t) = \widetilde{X}_u(t)-(\boldsymbol{\widetilde{P}}\widetilde{Y})_u(t) + [-\widetilde{X}_u(t)+(\boldsymbol{\widetilde{P}}\widetilde{Y})_u(t)] = 0$. Alternatively, if $\widetilde{Z}_u(t) > 0$, then $\widetilde{X}(t)-\boldsymbol{\widetilde{P}}\widetilde{Y}(t)+\sup_{0\leq s \leq t}[-\widetilde{X}(s) + \boldsymbol{\widetilde{P}}\widetilde{Y}(s)]^+>0$ and so the supremum in $\pi_{\widetilde{X},\widetilde{P}}(\widetilde{Y})_u(t)$ can not be attained at $t$; therefore $d\widetilde{Y}_u(t) = 0$.

We next show that the fixed point $\widetilde{Y} = \pi_{\widetilde{X},\widetilde{P}}(\widetilde{Y})$ and $\widetilde{Z}(t) = \widetilde{X}(t) +  ({\bf1}-\boldsymbol{\widetilde{P}}) \widetilde{Y}(t)$ is the only solution to \eqref{eq:cond1opnew}--\eqref{eq:cond3opnew}, 
proving uniqueness. To see this, first take any $(\bar{Y},\bar{Z})$ that satisfies \eqref{eq:cond1opnew}--\eqref{eq:cond3opnew}, and write $\hat{Y} = \pi_{\widetilde{X},\widetilde{P}}(\bar{Y})$. We will show that $\hat{Y} = \bar{Y}$. Note first that $\hat{Y} \le \bar{Y}$. To see this, note that conditions \eqref{eq:cond1opnew} and \eqref{eq:cond2opnew} imply that for all $u\in[0,1], t\in[0,T]$, 
$$\bar{Y}_u(t) \geq [-\widetilde{X}_u(t) + (\boldsymbol{\widetilde{P}}\bar{Y})_u(t)]^+\,.$$
For any fixed $t$ and $\epsilon>0$, $\exists\, t^*\in[0,t]$ such that 
$$[-\widetilde{X}_u(t^*) + (\boldsymbol{\widetilde{P}}\bar{Y})_u(t^*)]^+ + \epsilon \geq \sup_{0\leq s\leq t}[-\widetilde{X}_u(s) + (\boldsymbol{\widetilde{P}}\bar{Y})_u(s))]^+\,.$$
Since $d\bar{Y}(t) \geq 0$, we have 
\begin{align*}
\bar{Y}_u(t) \geq \bar{Y}_u(t^*) 
& \geq [-\widetilde{X}_u(t^*) + (\boldsymbol{\widetilde{P}}\bar{Y})_u(t^*)]^+ \\& \geq \sup_{0\leq s\leq t}[-\widetilde{X}_u(s) + (\boldsymbol{\widetilde{P}}\bar{Y})_u(s))]^+ - \epsilon = \hat{Y}_u(t) - \epsilon\,. 
\end{align*}
Letting $\epsilon \rightarrow 0$ yields $\hat{Y}_u(t) \leq \bar{Y}_u(t)$.

We next show that $\hat{Y} = \bar{Y}$, implying uniqueness of $\bar{Y}$. If not, since $\bar{Y}(0) = 0 = \hat{Y}(0)$, there must exist $u\in [0,1], t_0\in[0,T]$ such that both $\hat{Y}_u(t_0) < \bar{Y}_u(t_0)$ and $d\bar{Y}_u(t_0) > 0$. 
However, this means that $\bar{Z}_u(t_0) \geq \bar{Y}_u(t_0) - \hat{Y}_u(t_0) > 0$, contradicting \eqref{eq:cond3opnew}.
\end{proof}

\begin{remark}
In this section we have shown that the regulator for the diagonalized system \eqref{eq:cond1opnew}--\eqref{eq:cond3opnew} is given by the unique fixed point of $\pi_{\widetilde{X},\widetilde{P}}$. As it turns out, one can also show (cf.\  \cite{clarke2026growing} ) that the regulator of the original system \eqref{eq:cond1op}--\eqref{eq:cond3op} is given by the unique fixed point of $\pi_{X,F}$, but this requires additional theory and is less straightforward than the results presented in this section. Specifically, when $\boldsymbol{F}$ is in $\mathcal{R}$ but not necessarily $\widetilde{\mathcal{R}}$, contraction properties of $\pi_{X,F}$ can only be shown via a $k$-stage contraction argument, where $k$ is a positive integer such that $\|\boldsymbol{F}^{(k)}\|_{op}<1$. Additionally, for the Lipschitz properties that follow in Section \ref{sec:continuitybdddiagonal}, if $\|\boldsymbol{F}\|_{op}=1$ the proofs do not hold and so more delicate proofs bounding compositions of $\boldsymbol{F}$ and $\pi_{X,F}$ must be taken.
\end{remark}

\medskip

\section{Continuity and Lipschitz Properties}
\label{sec:continuitybdddiagonal}

In the previous section we have demonstrated that given any reflection operator $\boldsymbol{F}\in\mathcal{R}$ satisfying Assumption \ref{assumption:generalF} and free process $X\in\mathcal{D}_T(L_1)$, we can always find a unique solution to the infinite-dimensional Skorokhod problem posed by \eqref{eq:cond1op}--\eqref{eq:cond3op}. In this section we will show that under the same assumptions on $\boldsymbol{F}$, both the regulator $\Psi_F(X)$ and the reflected process $\Phi_F(X)$ are Lipschitz in both $X$ and $\boldsymbol{F}$. In a similar fashion to the proofs in the previous section, we will again utilize the diagonalized system, before reverting back to the original system. It is important to remember that when we show these proofs for $\boldsymbol{F}\in\widetilde{\mathcal{R}}$, we are referring to processes in the diagonalized system where the reflection operator has the property that $\|\boldsymbol{F}\|_{op}<1$.

In Section \ref{sec:eigfun} we have shown that Assumption \ref{assumption:generalF} allows for diagonalization of the system with a dominant eigenfunction whose essential infimum is bounded above zero. In particular, for the diagonalized system $\boldsymbol{\widetilde{P}}= \boldsymbol{D}\boldsymbol{F}\boldsymbol{D}^{-1}$, we have that $\|\boldsymbol{\widetilde{P}}\|_{op} = \rho(\boldsymbol{F})<1$. Furthermore, Lemma \ref{lem:kreinrutman} states that 
$$0 < \frac{\|\boldsymbol{D}\|_{op}}{\|\boldsymbol{D}^{-1}\|_{op}} < \infty,$$
which allows for the return to the original system without any issues regarding boundedness. This will be crucial in proving the Lipschitz properties in this section.

\subsection{Lipschitz continuity in the uniform norm} \label{subsec:uniflip}
We prove results on Lipschitz continuity in the uniform norm $\|\cdot\|_{T,1}$ established in \eqref{eq:unifnorm} in this subsection.

In Lemma \ref{lem:pifixpoint} we have shown that for $\boldsymbol{F}\in\widetilde{\mathcal{R}}$, $\pi_{X,F}$ has a unique fixed point. In the next lemma we show that we can reach this fixed point by iterating $\pi_{X,F}$ on any input process $W\in\mathcal{D}^\uparrow_T(L_1)$. 

\begin{lemma}
\label{lem:pitoreg}
Take $\boldsymbol{F}\in\widetilde{\mathcal{R}}$ and $X\in\mathcal{D}_T(L_1)$. Then for any $W\in\mathcal{D}_T^\uparrow(L_1)$, \[\|\pi_{X,F}^n(W) - \Psi_F(X)\|_{T,1}\to0 \quad\text{ as} n\to\infty.\]
\end{lemma}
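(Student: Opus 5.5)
The plan is to combine the contraction estimate of Lemma \ref{lem:picontraction} with the identification of the fixed point from the proof of Theorem \ref{thm:ExtUni}. First, I identify $\Psi_F(X)$ with the unique fixed point $Y^*$ of $\pi_{X,F}$ given by Lemma \ref{lem:pifixpoint}. For $\boldsymbol{F}\in\widetilde{\mathcal{R}}$ one can take $\boldsymbol{D}={\bf1}$ in Definition \ref{rem:rephrase}, so no diagonalization is needed. The existence part of the proof of Theorem \ref{thm:ExtUni} then shows that $Y^*$, together with $Z^*=X+({\bf1}-\boldsymbol{F})Y^*$, satisfies \eqref{eq:cond1op}--\eqref{eq:cond3op}. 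The uniqueness part shows that this is the only solution, so $\Psi_F(X)=Y^*$. This requires $X(0)\ge0$, as in Theorem \ref{thm:ExtUni}; I will either assume it as implicit in the notation $\Psi_F(X)$, or simply read $\Psi_F(X)$ as the fixed point of $\pi_{X,F}$.

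Second, I check that iterating makes sense. By definition, $\pi_{X,F}$ maps $\mathcal{D}^\uparrow_T(L_1)$ into itself: the running supremum of a positive part is nondecreasing and nonnegative. It is càdlàg in each $u$ because $-X_u+(\boldsymbol{F}W)_u$ is càdlàg. The latter follows by dominated convergence, since $|(\boldsymbol{F}W)_u(s)|\le\int_0^1 F(u,v)\sup_t|W_v(t)|\,dv$. The $\|\cdot\|_{T,1}$ norm of the image is bounded by $\|X\|_{T,1}+\|\boldsymbol{F}\|_{op}\|W\|_{T,1}<\infty$. Hence $\pi^n_{X,F}(W)\in\mathcal{D}^\uparrow_T(L_1)$ for every $n$.

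Third, the main estimate. Since $Y^*=\pi_{X,F}(Y^*)$, applying Lemma \ref{lem:picontraction} $n$ times gives
\[
\|\pi^n_{X,F}(W)-Y^*\|_{T,1}=\|\pi_{X,F}(\pi^{n-1}_{X,F}(W))-\pi_{X,F}(Y^*)\|_{T,1}\le \|\boldsymbol{F}\|_{op}^n\,\|W-Y^*\|_{T,1}.
\]
The right-hand side tends to $0$ because $\|\boldsymbol{F}\|_{op}<1$ and $\|W-Y^*\|_{T,1}\le\|W\|_{T,1}+\|Y^*\|_{T,1}<\infty$. This also gives a geometric rate of convergence.

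The only delicate point is the first step: making sure the pointwise-everywhere fixed point of Lemma \ref{lem:pifixpoint} is exactly the regulator $\Psi_F(X)$, rather than an object that agrees with it only for a.e. $u$. Since convergence is measured in $\|\cdot\|_{T,1}$, even an a.e. identification would suffice here. Still, I would cite the uniqueness argument of Theorem \ref{thm:ExtUni} to get equality everywhere.
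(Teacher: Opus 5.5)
Your proposal is correct and follows the paper's own proof. Both apply Lemma~\ref{lem:picontraction} $n$ times against the fixed point $Y^*=\pi_{X,F}(Y^*)$ of Lemma~\ref{lem:pifixpoint}, giving the geometric bound $\|\boldsymbol{F}\|_{op}^n\|W-Y^*\|_{T,1}\to0$. Your additional checks make explicit what the paper simply asserts when it writes $W^2=\pi_{X,F}(W^2)=\Psi_F(X)$: that the iterates stay in $\mathcal{D}^\uparrow_T(L_1)$, and that the fixed point equals $\Psi_F(X)$ by running the existence/uniqueness argument of Theorem~\ref{thm:ExtUni} with $\boldsymbol{D}={\bf1}$, which tacitly requires $X(0)\ge0$.
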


\begin{proof}
Take $W^1,W^2\in\mathcal{D}^\uparrow_T(L_1)$. Applying Lemma \ref{lem:picontraction} iteratively gives
$$\|\pi^n_{X,F}(W^1) - \pi^n_{X,F}(W^2)\|_{T,1} \le \|\boldsymbol{F}\|_{op}^n\|W^1-W^2\|_{T,1}.$$
Next, let $W^2$ be the unique fixed point of $\pi_{X,F}$ guaranteed by Lemma \ref{lem:pifixpoint}, so $W^2=\pi_{X,F}(W^2)=\Psi_{F}(X)$. As $\|\boldsymbol{F}\|_{op}<1$, the result follows immediately.
\end{proof}

In the next theorem we show that given a fixed reflection operator $\boldsymbol{F}\in\mathcal{R}$,
$\Psi_F(X)$ and $\Phi_F(X)$ are both Lipschitz in $X$. 

\begin{theorem}\label{thm:DiagLipschitz}
Take $X^1\in\mathcal{D}_T(L_1)$ and $X^2\in\mathcal{D}_T(L_1)$. Let $(\Phi_{F}(X^1),\Psi_{F}(X^1))$ and $(\Phi_{F}(X^2),\Psi_{F}(X^2))$ be the corresponding reflected processes and regulators associated with operator $\boldsymbol{F}\in\mathcal{R}$ satisfying Assumption \ref{assumption:generalF}. Let $\boldsymbol{D}$ be a strictly positive diagonal operator such that $\|\boldsymbol{D}\boldsymbol{F}\boldsymbol{D}^{-1}\|_{op} = \gamma<1$.
Then the regulator and reflected process maps of the original process are Lipschitz in the free process, satisfying 
    \begin{equation*}
    \|\Psi_F(X^1)-\Psi_F(X^2)\|_{T,1} \le \frac{\delta_{\sup}}{\delta_{\inf}(1-\gamma)} \|X^1-X^2\|_{T,1}\,,
    \end{equation*}
    \begin{equation*}
        \|\Phi_F(X^1)-\Phi_F(X^2)\|_{T,1} \le \frac{\delta_{\sup}}{\delta_{\inf}}\left(1 + \frac{2}{1-\gamma}\right)\|X^1 - X^2\|_{T,1},
    \end{equation*}
    where $\delta_{\sup} = \|\boldsymbol{D}\|_{op}$ and $\delta_{\inf}^{-1} = \|\boldsymbol{D}^{-1}\|_{op}$.
\end{theorem}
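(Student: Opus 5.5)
We work in the diagonalized system of Definition \ref{rem:rephrase}. Set $\boldsymbol{\widetilde{P}} = \boldsymbol{D}\boldsymbol{F}\boldsymbol{D}^{-1}$, so $\|\boldsymbol{\widetilde{P}}\|_{op}=\gamma<1$, and put $\widetilde{X}^i = \boldsymbol{D}X^i$ for $i=1,2$. By the proof of Theorem \ref{thm:ExtUni}, the regulator of the diagonalized system is $\widetilde{Y}^i = \boldsymbol{D}\Psi_F(X^i)$. It is the unique fixed point of $\pi_{\widetilde{X}^i,\widetilde{P}}$ (Lemma \ref{lem:pifixpoint}). Correspondingly $\widetilde{Z}^i = \boldsymbol{D}\Phi_F(X^i)$. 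The plan is to prove the Lipschitz bounds first for $(\widetilde{Y}^i,\widetilde{Z}^i)$ in terms of $\widetilde{X}^1-\widetilde{X}^2$, and then transfer them back to the original system.

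\textbf{Diagonalized regulator.} Using the fixed-point property and \eqref{eq:supproperty} pointwise in $u$ (first for $\sup_{s\le t}$, then for $[\cdot]^+$, which is $1$-Lipschitz), we get for every $u$ and $t$
\[
|\widetilde{Y}^1_u(t)-\widetilde{Y}^2_u(t)| \le \sup_{0\le s\le t}\bigl(|\widetilde{X}^1_u(s)-\widetilde{X}^2_u(s)| + |(\boldsymbol{\widetilde{P}}(\widetilde{Y}^1-\widetilde{Y}^2))_u(s)|\bigr).
\]
We take $\sup_t$, bound the supremum of the sum by the sum of the suprema, and integrate in $u$. The bound $\|\boldsymbol{\widetilde{P}}f\|_{T,1}\le\gamma\|f\|_{T,1}$ then gives
\[
\|\widetilde{Y}^1-\widetilde{Y}^2\|_{T,1}\le \|\widetilde{X}^1-\widetilde{X}^2\|_{T,1}+\gamma\|\widetilde{Y}^1-\widetilde{Y}^2\|_{T,1}.
\]
Both regulators lie in $\mathcal{D}_T(L_1)$, so the left side is finite and we may rearrange to get the factor $(1-\gamma)^{-1}$. (Alternatively, one can run the iterates of Lemma \ref{lem:pitoreg} from the common starting point $W=0$ and sum a geometric series; this yields the same constant.)

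\textbf{Diagonalized reflected process.} From $\widetilde{Z}^i=\widetilde{X}^i+({\bf1}-\boldsymbol{\widetilde{P}})\widetilde{Y}^i$ we obtain
\[
\|\widetilde{Z}^1-\widetilde{Z}^2\|_{T,1}\le \|\widetilde{X}^1-\widetilde{X}^2\|_{T,1}+(1+\gamma)\|\widetilde{Y}^1-\widetilde{Y}^2\|_{T,1}\le \Bigl(1+\frac{1+\gamma}{1-\gamma}\Bigr)\|\widetilde{X}^1-\widetilde{X}^2\|_{T,1}.
\]
Since $1+\gamma\le 2$, this is at most $\bigl(1+\frac{2}{1-\gamma}\bigr)\|\widetilde{X}^1-\widetilde{X}^2\|_{T,1}$.

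\textbf{Returning to the original system.} $\boldsymbol{D}$ and $\boldsymbol{D}^{-1}$ are multiplications by functions of $u$ alone, so $\|\boldsymbol{D}f\|_{T,1}\le\delta_{\sup}\|f\|_{T,1}$ and $\|\boldsymbol{D}^{-1}f\|_{T,1}\le\delta_{\inf}^{-1}\|f\|_{T,1}$. Hence $\|\Psi_F(X^1)-\Psi_F(X^2)\|_{T,1}\le\delta_{\inf}^{-1}\|\widetilde{Y}^1-\widetilde{Y}^2\|_{T,1}$ and $\|\widetilde{X}^1-\widetilde{X}^2\|_{T,1}\le\delta_{\sup}\|X^1-X^2\|_{T,1}$, and the same holds for $Z$. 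Chaining these with the two diagonalized bounds gives both claimed inequalities.

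\textbf{Main obstacle.} The step needing the most care is justifying that the diagonalized regulator is exactly $\boldsymbol{D}\Psi_F(X)$, with finite norm, so that the rearrangement is legitimate. This rests on the uniqueness in Theorem \ref{thm:ExtUni} together with the two-sided bounds $0<\delta_{\inf}\le\delta_{\sup}<\infty$ from Proposition \ref{prop:OpDiag}; these ensure $\boldsymbol{D}$ maps $\mathcal{D}_T(L_1)$ to itself bijectively and preserves the cone order and the complementarity condition.
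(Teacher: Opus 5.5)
Your proposal is correct and follows the paper's proof. You diagonalize with $\boldsymbol{D}$, derive $\|\widetilde{Y}^1-\widetilde{Y}^2\|_{T,1}\le\|\widetilde{X}^1-\widetilde{X}^2\|_{T,1}+\gamma\|\widetilde{Y}^1-\widetilde{Y}^2\|_{T,1}$ from \eqref{eq:supproperty} and $\|\boldsymbol{\widetilde{P}}\|_{op}=\gamma$, bound $\widetilde{Z}$ via \eqref{eq:cond1opnew}, and pull back with $\boldsymbol{D}^{-1}$. The only differences are that you apply the estimate directly at the fixed points (legitimate, since their $\|\cdot\|_{T,1}$ norms are finite) rather than along Picard iterates from $0$ combined with Lemma \ref{lem:pitoreg}, and that your bound $\|{\bf1}-\boldsymbol{\widetilde{P}}\|_{op}\le 1+\gamma$ yields the slightly sharper constant $2/(1-\gamma)$ for $\Phi$.
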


\begin{proof}
Consider the diagonalized processes $\widetilde{X}^i \coloneq \boldsymbol{D}X^i$, for $i=1,2$, with operator $\boldsymbol{\widetilde{P}} = \boldsymbol{D}\boldsymbol{F}\boldsymbol{D}^{-1}$ where $\boldsymbol{\widetilde{P}}\in\widetilde{\mathcal{R}}$ and $\|\boldsymbol{\widetilde{P}}\|_{op} = \gamma <1$. Denote $\Phi_{\widetilde{P}}(\widetilde{X}^i) = \widetilde{Z}^i$ and $\Psi_{\widetilde{P}}(\widetilde{X}^i) = \widetilde{Y}^i$ the reflected process and the regulator corresponding to $\boldsymbol{\widetilde{P}}$, respectively.

Let $\widetilde{Y}^{i,0} = 0$ and $\widetilde{Y}^{i,n+1} = \pi_{\widetilde{X}^i,\widetilde{P}}(\widetilde{Y}^{i,n})$ for $n \geq 0, i = 1,2$. Then, using \eqref{eq:supproperty} in the first inequality, we obtain
\begin{align*}
&\|\widetilde{Y}^{1,n+1}-\widetilde{Y}^{2,n+1}\|_{T,1}\\
&=\|\pi_1(\widetilde{Y}^{1,n}) - \pi_2(\widetilde{Y}^{2,n})\|_{T,1} \\
&=\int_0^1\sup_{0\leq t\leq T}\left|\sup_{0\leq s \leq t}[-\widetilde{X}_u^1(s) + (\boldsymbol{\widetilde{P}}\widetilde{Y}^{1,n})_u(s)]^+ - \sup_{0\leq s \leq t}[-\widetilde{X}_u^2(s) + (\boldsymbol{\widetilde{P}}\widetilde{Y}^{2,n})_u(s)]^+\right|du\\
&\leq \int_0^1\sup_{0\leq t\leq T}\left|[-\widetilde{X}_u^1(t) + (\boldsymbol{\widetilde{P}}\widetilde{Y}^{1,n})_u(t)]^+ - [-\widetilde{X}_u^2(t) + (\boldsymbol{\widetilde{P}}\widetilde{Y}^{2,n})_u(t)]^+\right|du\\
&\leq \int_0^1\sup_{0\leq t\leq T}\left|-(\widetilde{X}_u^1(t)-\widetilde{X}_u^2(t)) + ((\boldsymbol{\widetilde{P}}\widetilde{Y}^{1,n})_u(t) -(\boldsymbol{\widetilde{P}}\widetilde{Y}^{2,n})_u(t))\right|du\\
&\leq \int_0^1\sup_{0\leq t\leq T}\left|\widetilde{X}_u^1(t)-\widetilde{X}_u^2(t)\right|du + \int_0^1\sup_{0\leq t\leq T}\left|\boldsymbol{\widetilde{P}}(\widetilde{Y}^{1,n}-\widetilde{Y}^{2,n})_u(t)\right|du\\\\
&\leq \|\widetilde{X}^1-\widetilde{X}^2\|_{T,1} + \gamma\|\widetilde{Y}^{1,n}-\widetilde{Y}^{2,n}\|_{T,1}
\end{align*}
where the last inequality follows from the proof of Lemma \ref{lem:picontraction}.

Applying the reverse triangle inequality, we get
$$\big|\|\widetilde{Y}^{1,n}-\widetilde{Y}^{2,n}\|_{T,1}-\|\Psi_{\widetilde{P}}(\widetilde{X}^1)-\Psi_{\widetilde{P}}(\widetilde{X}^2)\|_{T,1}\big| \le \|\widetilde{Y}^{1,n}-\Psi_{\widetilde{P}}(\widetilde{X}^1)\|_{T,1}+\|\Psi_{\widetilde{P}}(\widetilde{X}^2)-\widetilde{Y}^{2,n}\|_{T,1},$$ and so applying Lemma \ref{lem:pitoreg} yields \[\|\widetilde{Y}^{1,n}-\widetilde{Y}^{2,n}\|_{T,1} \to \|\Psi_{\widetilde{P}}(\widetilde{X}^1)-\Psi_{\widetilde{P}}(\widetilde{X}^2)\|_{T,1}.\] 
Letting $n \rightarrow \infty$ and rearranging, we get 
$$\|\Psi_{\widetilde{P}}({\widetilde{X}}^1)-\Psi_{\widetilde{P}}({\widetilde{X}}^2)\|_{T,1} \le \frac{1}{1-\gamma} \|{\widetilde{X}}^1-{\widetilde{X}}^2\|_{T,1}\,. $$
Using \eqref{eq:cond1opnew} and the fact that $\|\mathbf{1}-\boldsymbol{\widetilde{P}}\|_{op} \le 2$, we obtain 
\begin{align*}
\|\Phi_{\widetilde{P}}(\widetilde{X}^1)-\Phi_{\widetilde{P}}(\widetilde{X}^2)\|_{T,1} &\leq \|\widetilde{X}^1-\widetilde{X}^2\|_{T,1} + \|\mathbf{1}-\boldsymbol{\widetilde{P}}\|_{op}\|\Psi_{\widetilde{P}}(\widetilde{X}^1)-\Psi_{\widetilde{P}}(\widetilde{X}^2)\|_{T,1} \\
&\leq \left(1 + \frac{2}{1-\gamma}\right)\|\widetilde{X}^1 - \widetilde{X}^2\|_{T,1}\,.
\end{align*}

For the corresponding $X^1$ and $X^2$ relating back to the original operator $F$, we have the relation

$$\Psi_F(X^i) = Y^i = \boldsymbol{D}^{-1}\widetilde{Y}^i = \boldsymbol{D}^{-1}\Psi_{\widetilde{P}}(\widetilde{X}^i)= \boldsymbol{D}^{-1}\Psi_{\widetilde{P}}(\boldsymbol{D}X^i), \quad i=1,2\,,$$
and similarly $\Phi_F(X^i) = \boldsymbol{D}^{-1}\Phi_{\widetilde{P}}(\boldsymbol{D}X^i)\,.$

Then we obtain 
\begin{align*}
\|\Psi_F(X^1)-\Psi_F(X^2)\|_{T,1} &= \|\boldsymbol{D}^{-1}\Psi_{\widetilde{P}}(\boldsymbol{D}X^1)-\boldsymbol{D}^{-1}\Psi_{\widetilde{P}}(\boldsymbol{D}X^2)\|_{T,1} \\
&\leq \frac{1}{\delta_{\inf}}\|\Psi_{\widetilde{P}}(\boldsymbol{D}X^1)-\Psi_{\widetilde{P}}(\boldsymbol{D}X^2)\|_{T,1} \\
&\leq \frac{1}{\delta_{\inf}(1-\gamma)} \|\boldsymbol{D}X^1-\boldsymbol{D}X^2\|_{T,1} \\
&\leq\frac{\delta_{\sup}}{\delta_{\inf}(1-\gamma)} \|X^1-X^2\|_{T,1}\,,
\end{align*}
and similarly,
\begin{align*}
\|\Phi_F(X^1)-\Phi_F(X^2)\|_{T,1} &\leq \frac{1}{\delta_{\inf}}\|\Phi_{\widetilde{P}}(\boldsymbol{D}X^1)-\Phi_{\widetilde{P}}(\boldsymbol{D}X^2)\|_{T,1} \\
&\leq \frac{\delta_{\sup}}{\delta_{\inf}}\left(1 + \frac{2}{1-\gamma}\right)\|X^1 - X^2\|_{T,1}\,. \qedhere
\end{align*}
\end{proof}

\begin{remark} \label{rem:lipschitzdiagonalized}
If the system has already been diagonalized, i.e., $\boldsymbol{F}\in\widetilde{R}$, then the results of Theorem \ref{thm:DiagLipschitz} hold with $\delta_{\inf} = \delta_{\sup} = 1$.
\end{remark}

We now show in the next lemma that for fixed $X\in\mathcal{D}_T(L_1)$, $\Psi_F(X)$ and $\Phi_F(X)$ are both Lipschitz in $\boldsymbol{F}\in\widetilde{\mathcal{R}}$. 

\begin{lemma}
\label{lem:refbound}
Let $\boldsymbol{F}_1$ and $\boldsymbol{F}_2$ be two operators in $\widetilde{\mathcal{R}}$ satisfying $\|\boldsymbol{F}_1\|_{op}=\gamma_1<1$ and $\|\boldsymbol{F}_2\|_{op}=\gamma_2<1$. Then for any $X\in \mathcal{D}^T(L_1)$ and all $n\geq1$, 
\begin{enumerate}

\item[(i)] 
\begin{equation}
\label{eq:regbound1}
\|\Psi_{F_i}(X)\|_{T,1} \leq \frac{\|X\|_{T,1}}{1-\gamma_i}\,,
\end{equation}

\item[(ii)]
\begin{equation}
\label{eq:regbound2}
\|\Psi_{F_1}(X)-\Psi_{F_2}(X)\|_{T,1} \leq \frac{\|\boldsymbol{F}_1-\boldsymbol{F}_2\|_{op}\|X\|_{T,1}}{(1-\gamma_1)(1-\gamma_2)}\,.
\end{equation}

\end{enumerate}
\end{lemma}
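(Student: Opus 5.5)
Both bounds come from running the Picard iteration of $\pi_{X,F_i}$ started at zero and passing to the limit with Lemma \ref{lem:pitoreg}, in the same way as in the proof of Theorem \ref{thm:DiagLipschitz}. Since $\boldsymbol{F}_i\in\widetilde{\mathcal{R}}$, no diagonalization is needed. Theorem \ref{thm:ExtUni} (applied with $\boldsymbol{D}={\bf1}$, or directly via Lemma \ref{lem:pifixpoint} and the existence/uniqueness argument) identifies $\Psi_{F_i}(X)$ as the unique fixed point of $\pi_{X,F_i}$. For (ii) there is also a cleaner route: work directly with the fixed points and skip the iteration.

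For (i), write $Y^i=\Psi_{F_i}(X)=\pi_{X,F_i}(Y^i)$. Pointwise,
\[
|Y^i_u(t)|\le \sup_{s\le t}|-X_u(s)+(\boldsymbol{F}_iY^i)_u(s)|\le \sup_{s\le T}|X_u(s)|+\sup_{s\le T}|(\boldsymbol{F}_iY^i)_u(s)|,
\]
using $|[a]^+|\le|a|$. Integrating in $u$ and applying $\|\boldsymbol{F}_if\|_{T,1}\le\|\boldsymbol{F}_i\|_{op}\|f\|_{T,1}$ gives
\[
\|Y^i\|_{T,1}\le\|X\|_{T,1}+\gamma_i\|Y^i\|_{T,1}.
\]
Since $\|Y^i\|_{T,1}<\infty$ (because $Y^i\in\mathcal{D}_T(L_1)$), we may rearrange. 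Equivalently, the iterates $Y^{i,n}$ from $0$ satisfy $\|Y^{i,n+1}\|\le\|X\|+\gamma_i\|Y^{i,n}\|$, hence $\|Y^{i,n}\|\le\|X\|/(1-\gamma_i)$, and Lemma \ref{lem:pitoreg} passes this to the limit.

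For (ii), apply \eqref{eq:supproperty} together with the $1$-Lipschitz property of $[\cdot]^+$:
\[
|Y^1_u(t)-Y^2_u(t)|\le\sup_{s\le T}|(\boldsymbol{F}_1Y^1-\boldsymbol{F}_2Y^2)_u(s)|.
\]
Split $\boldsymbol{F}_1Y^1-\boldsymbol{F}_2Y^2=\boldsymbol{F}_1(Y^1-Y^2)+(\boldsymbol{F}_1-\boldsymbol{F}_2)Y^2$. Integrating over $u$ and using the operator-norm bound (which holds for the signed kernel $F_1-F_2$, since Lemma A.1 of \cite{clarke2026growing} uses $|G|$) gives
\[
\|Y^1-Y^2\|_{T,1}\le\gamma_1\|Y^1-Y^2\|_{T,1}+\|\boldsymbol{F}_1-\boldsymbol{F}_2\|_{op}\|Y^2\|_{T,1}.
\]
Rearranging and inserting (i) for $Y^2$ yields \eqref{eq:regbound2}.

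The only delicate points are the following.
\begin{enumerate}
\item The operator-norm inequality must be valid for a non-positive kernel. This is immediate from $|\int G f|\le\int|G||f|$ followed by Tonelli.
\item The phrase ``for all $n\ge1$'' in the statement suggests the authors intend iterate-level bounds $\|Y^{1,n}-Y^{2,n}\|_{T,1}\le\cdots$. If so, I would prove by induction that
\[
\|Y^{1,n}-Y^{2,n}\|\le \|\boldsymbol{F}_1-\boldsymbol{F}_2\|_{op}\|X\|\sum_{k<n}\gamma_1^k/(1-\gamma_2),
\]
using the uniform bound $\|Y^{2,n}\|\le\|X\|/(1-\gamma_2)$ from (i), and then let $n\to\infty$ via Lemma \ref{lem:pitoreg}.
\end{enumerate}
I expect this bookkeeping to be the main (mild) obstacle. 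The fixed-point route avoids it entirely.
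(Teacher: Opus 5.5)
Your argument is correct and uses the same estimates as the paper's proof: \eqref{eq:supproperty}, the $1$-Lipschitz property of $[\cdot]^+$, the operator-norm bound, a rearrangement, and then inserting (i) into (ii). The only differences are minor. You apply the one-step estimate directly to the fixed points $Y^i=\pi_{X,F_i}(Y^i)$, using $\|Y^i\|_{T,1}<\infty$ to rearrange, whereas the paper applies it to the Picard iterates from $0$ and passes to the limit via Lemma \ref{lem:pitoreg}. You also split off $(\boldsymbol{F}_1-\boldsymbol{F}_2)Y^2$ where the paper splits off $(\boldsymbol{F}_1-\boldsymbol{F}_2)Y^1$, which only swaps the roles of $\gamma_1$ and $\gamma_2$ in the intermediate step.
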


\begin{proof}
We begin by writing $Y^{i,0} = 0$ and $Y^{i,n+1} = \pi_{X,F_i}(Y^{i,n})$, $i=1,2$. 
\begin{enumerate}
\item[(i)]
Note that
\begin{align*}
\|Y^{i,n+1}\|_{T,1} &= \|\pi_{X,F_i}(Y^{i,n})\|_{T,1}\\
&= \bigg\|\sup_{0\le t\le T}\bigg|\sup_{0\le s\le t}[-X(s) + \boldsymbol{F}_iY^{i,n}(s)]^+\bigg|\bigg\|_1 \\
&\le \bigg\|\sup_{0\le t\le T}\bigg|-X(t) + \boldsymbol{F}_iY^{i,n}(t)\bigg|\bigg\|_1 \\
&\le \|X\|_{T,1} + \|\boldsymbol{F}_i\|_{op}\|Y^{i,n}\|_{T,1}\,.
\end{align*}
Using the reverse triangle inequality, $\big|\|Y^{i,n}\|_{T,1} - \|\Psi_{F_i}(X)\|_{T,1}\big| \le \|Y^{i,n} - \Psi_{F_i}(X)\|_{T,1}$,
and so applying Lemma \ref{lem:pitoreg} yields $\|Y^{i,n}\|_{T,1} \to \|\Psi_{F_i}(X)\|_{T,1}$, and so taking $n\to\infty$ in the above we have
\begin{align*}
\|\Psi_{F_i}(X)\|_{T,1} \le \|X\|_{T,1} + \gamma_i\|\Psi_{F_i}(X)\|_{T,1}.
\end{align*}
Finally, since $\gamma_i<1$, rearranging gives the result.

\item[(ii)]
Using the iteration formula and applying \eqref{eq:supproperty} in the first inequality, we have
\begin{align*}
\|Y^{1,n+1}-Y^{2,n+1}\|_{T,1} &= \|\pi_{X,F_1}(Y^{1,n}) - \pi_{X,F_2}(Y^{2,n})\|_{T,1}\\
&= \bigg\|\sup_{0\le t\le T}\bigg|\sup_{0\le s\le t}[-X(s) + \boldsymbol{F}_1Y^{1,n}(s)]^+ - \sup_{0\le s\le t}[-X(s) + \boldsymbol{F}_2Y^{2,n}(s)]^+\bigg|\bigg\|_1 \\
&\le \bigg\|\sup_{0\le t\le T}\big|[-X(t) + \boldsymbol{F}_1Y^{1,n}(t)]^+ - [-X(t) + \boldsymbol{F}_2Y^{2,n}(t)]^+\big|\bigg\|_1 \\
&\le \bigg\|\sup_{0\le t\le T}\big|\boldsymbol{F}_1Y^{1,n}(t) - \boldsymbol{F}_2Y^{2,n}(t)\big|\bigg\|_1 \\
&\le \|\boldsymbol{F}_1Y^{1,n} - \boldsymbol{F}_2Y^{1,n}\|_{T,1} + \|\boldsymbol{F}_2Y^{1,n} - \boldsymbol{F}_2Y^{2,n}\|_{T,1} \\
&\le \|\boldsymbol{F}_1-\boldsymbol{F}_2\|_{op}\|Y^{1,n}\|_{T,1} + \|\boldsymbol{F}_2\|_{op}\|Y^{1,n} - Y^{2,n}\|_{T,1}\,.
\end{align*}
Again applying the reverse triangle inequality, we obtain  $$\big|\|Y^{1,n}-Y^{2,n}\|_{T,1} - \|\Psi_{F_1}(X)-\Psi_{F_2}(X)\|_{T,1}\big| \le \|Y^{1,n}-\Psi_{F_1}(X)\|_{T,1} + \|\Psi_{F_2}(X)-Y^{2,n}\|_{T,1}\,, $$
and so applying Lemma \ref{lem:pitoreg} yields $\|Y^{1,n}-Y^{2,n}\|_{T,1} \to \|\Psi_{F_1}(X)-\Psi_{F_2}(X)\|_{T,1}$, and so taking $n\to\infty$ in the above we have
\begin{align*}
\|\Psi_{F_1}(X)-\Psi_{F_2}(X)\|_{T,1} \le \|\boldsymbol{F}_1-\boldsymbol{F}_2\|_{op}\|\Psi_{F_1}(X)\|_{T,1} + \gamma_2\|\Psi_{F_1}(X)-\Psi_{F_2}(X)\|_{T,1}\,.
\end{align*}
Finally, noting that $\gamma_2<1$, apply part (i) and rearrange for the result. \qedhere
\end{enumerate}
\end{proof}

We now have the necessary results to show that the original process exhibits Lipschitz-like conditions in both $\boldsymbol{F}$ and $X$ combined. However, since we have proved the above two results for the diagonalized system, in bringing the original processes back we introduce the necessary terms involving the specific diagonal operators for each process. While this may initially seem messy, in the results following this theorem we show that as long as the reflection operators are close enough in the $\|\cdot\|_{op}$ norm, we can in fact choose a common diagonal operator for the diagonalized processes, resulting in the corresponding Lipschitz results.

\begin{theorem}
\label{thm:doublelipschitz}
Take $X^1\in\mathcal{D}_T(L_1)$ and $X^2\in\mathcal{D}_T(L_1)$. Let $(\Phi_{F_1}(X^1),\Psi_{F_1}(X^1))$ and $(\Phi_{F_2}(X^2),\Psi_{F_2}(X^2))$ be the corresponding reflected processes and regulators associated with operators $\boldsymbol{F}_1\in\mathcal{R}$ and $\boldsymbol{F}_2\in\mathcal{R}$ both satisfying Assumption \ref{assumption:generalF}. Let $\boldsymbol{D}_1, \boldsymbol{D}_2$ be strictly positive diagonal operators such that $\|\boldsymbol{D}_i\boldsymbol{F}_i\boldsymbol{D}_i^{-1}\|_{op} = \gamma_i<1$, $\|\boldsymbol{D}_i\|_{op}=\delta^+_i$, and $\|\boldsymbol{D}^{-1}_i\|_{op}=\delta^-_i$, $i=1,2$, and write $\beta_2 \coloneq 1 + \|\boldsymbol{D}_2^{-1}\|_{op}\|\boldsymbol{D}_2\|_{op}$.
Then the following holds:
\begin{enumerate}[label=\roman*]
\item [(i)]
\begin{equation} \label{eq:doublip1}
\begin{split}
&\|\Psi_{F_2}(X^2)-\Psi_{F_1}(X^1)\|_{T,1} \\&\le \frac{\delta^-_2\delta^+_2}{1-\gamma_2}\|X^2-X^1\|_{T,1} \\
&\quad + \frac{\delta^-_2\delta^+_2\delta^-_1\delta^+_1\|X^1\|_{T,1}}{(1-\gamma_1)(1-\gamma_2)}\|\boldsymbol{F}_2-\boldsymbol{F}_1\|_{op} \\
&\quad+ \bigg(\frac{\delta^-_2\|X^1\|_{T,1}}{1-\gamma_2} + \frac{\delta^-_2\delta^-_1\delta^+_1\|\boldsymbol{F}_1\|_{op}\|X^1\|_{T,1}}{(1-\gamma_1)(1-\gamma_2)}\bigg)\|\boldsymbol{D}_2 - \boldsymbol{D}_1\|_{op}\\
&\quad+ \bigg(\frac{\delta^+_1\|X^1\|_{T,1}}{1-\gamma_1} + \frac{\delta^-_2\delta^+_2\delta^+_1\|\boldsymbol{F}_2\|_{op}\|X^1\|_{T,1}}{(1-\gamma_1)(1-\gamma_2)}\bigg)\|\boldsymbol{D}_2^{-1} - \boldsymbol{D}_1^{-1}\|_{op}\,; 
\end{split}
\end{equation}
\item [(ii)]
\begin{equation} \label{eq:doublip2}
\begin{split}
&\|\Phi_{F_2}(X^2)-\Phi_{F_1}(X^1)\|_{T,1} 
\\&\le \bigg(1+\frac{\beta_2\delta^-_2\delta^+_2}{1-\gamma_2}\bigg)\|X^2-X^1\|_{T,1}  \\
&\quad + \bigg(\frac{\beta_2\delta^-_2\delta^+_2\delta^-_1\delta^+_1\|X^1\|_{T,1}}{(1-\gamma_1)(1-\gamma_2)} + \frac{\delta^-_1\delta^+_1\|X^1\|_{T,1}}{1-\gamma_1}\bigg)\|\boldsymbol{F}_2-\boldsymbol{F}_1\|_{op}\\
&\quad+ \beta_2\bigg(\frac{\delta^-_2\|X^1\|_{T,1}}{1-\gamma_2} + \frac{\delta^-_2\delta^-_1\delta^+_1\|\boldsymbol{F}_1\|_{op}\|X^1\|_{T,1}}{(1-\gamma_1)(1-\gamma_2)}\bigg)\|\boldsymbol{D}_2 - \boldsymbol{D}_1\|_{op}\\
&\quad+ \beta_2\bigg(\frac{\delta^+_1\|X^1\|_{T,1}}{1-\gamma_1} + \frac{\delta^-_2\delta^+_2\delta^+_1\|\boldsymbol{F}_2\|_{op}\|X^1\|_{T,1}}{(1-\gamma_1)(1-\gamma_2)}\bigg)\|\boldsymbol{D}_2^{-1} - \boldsymbol{D}_1^{-1}\|_{op} \,.
\end{split}
\end{equation}
\end{enumerate}
\end{theorem}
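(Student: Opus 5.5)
The plan is to reduce everything to the diagonalized systems and then split the difference into pieces controlled by Theorem \ref{thm:DiagLipschitz} and Lemma \ref{lem:refbound}. Write $\boldsymbol{P}_i \coloneq \boldsymbol{D}_i\boldsymbol{F}_i\boldsymbol{D}_i^{-1}\in\widetilde{\mathcal{R}}$, so $\|\boldsymbol{P}_i\|_{op}=\gamma_i$. As in the proof of Theorem \ref{thm:DiagLipschitz}, we have the representation $\Psi_{F_i}(X)=\boldsymbol{D}_i^{-1}\Psi_{P_i}(\boldsymbol{D}_iX)$.

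\textbf{Part (i).} First insert the intermediate term $\Psi_{F_2}(X^1)$:
\begin{equation*}
\Psi_{F_2}(X^2)-\Psi_{F_1}(X^1)=\big[\Psi_{F_2}(X^2)-\Psi_{F_2}(X^1)\big]+\big[\Psi_{F_2}(X^1)-\Psi_{F_1}(X^1)\big].
\end{equation*}
The first bracket is bounded by Theorem \ref{thm:DiagLipschitz}, which gives the coefficient $\delta_2^-\delta_2^+/(1-\gamma_2)$.

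For the second bracket, write it as $\boldsymbol{D}_2^{-1}\Psi_{P_2}(\boldsymbol{D}_2X^1)-\boldsymbol{D}_1^{-1}\Psi_{P_1}(\boldsymbol{D}_1X^1)$ and telescope into three pieces:
\begin{align*}
&(\boldsymbol{D}_2^{-1}-\boldsymbol{D}_1^{-1})\Psi_{P_1}(\boldsymbol{D}_1X^1)
+\boldsymbol{D}_2^{-1}\big[\Psi_{P_2}(\boldsymbol{D}_2X^1)-\Psi_{P_2}(\boldsymbol{D}_1X^1)\big]\\
&\quad+\boldsymbol{D}_2^{-1}\big[\Psi_{P_2}(\boldsymbol{D}_1X^1)-\Psi_{P_1}(\boldsymbol{D}_1X^1)\big].
\end{align*}
Each piece is handled separately.
\begin{itemize}
\item The first piece uses Lemma \ref{lem:refbound}(i) with $\|\boldsymbol{D}_1X^1\|_{T,1}\le\delta_1^+\|X^1\|_{T,1}$. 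This gives $\delta_1^+\|X^1\|_{T,1}/(1-\gamma_1)$ times $\|\boldsymbol{D}_2^{-1}-\boldsymbol{D}_1^{-1}\|_{op}$.
\item The second piece uses Theorem \ref{thm:DiagLipschitz} in its diagonalized form (Remark \ref{rem:lipschitzdiagonalized}). This gives $\delta_2^-\|X^1\|_{T,1}/(1-\gamma_2)$ times $\|\boldsymbol{D}_2-\boldsymbol{D}_1\|_{op}$.
\item The third piece uses Lemma \ref{lem:refbound}(ii). This gives $\delta_2^-\delta_1^+\|X^1\|_{T,1}\|\boldsymbol{P}_2-\boldsymbol{P}_1\|_{op}/((1-\gamma_1)(1-\gamma_2))$.
\end{itemize}

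The remaining step, which I regard as the crux, is bounding $\|\boldsymbol{P}_2-\boldsymbol{P}_1\|_{op}$ through $\boldsymbol{F}$ and $\boldsymbol{D}$ differences. I will use the algebraic identity
\begin{equation*}
\boldsymbol{D}_2\boldsymbol{F}_2\boldsymbol{D}_2^{-1}-\boldsymbol{D}_1\boldsymbol{F}_1\boldsymbol{D}_1^{-1}=(\boldsymbol{D}_2-\boldsymbol{D}_1)\boldsymbol{F}_1\boldsymbol{D}_1^{-1}+\boldsymbol{D}_2(\boldsymbol{F}_2-\boldsymbol{F}_1)\boldsymbol{D}_1^{-1}+\boldsymbol{D}_2\boldsymbol{F}_2(\boldsymbol{D}_2^{-1}-\boldsymbol{D}_1^{-1}),
\end{equation*}
which can be verified by expanding. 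Together with submultiplicativity of $\|\cdot\|_{op}$, this yields
\begin{equation*}
\|\boldsymbol{P}_2-\boldsymbol{P}_1\|_{op}\le\delta_1^-\|\boldsymbol{F}_1\|_{op}\|\boldsymbol{D}_2-\boldsymbol{D}_1\|_{op}+\delta_2^+\delta_1^-\|\boldsymbol{F}_2-\boldsymbol{F}_1\|_{op}+\delta_2^+\|\boldsymbol{F}_2\|_{op}\|\boldsymbol{D}_2^{-1}-\boldsymbol{D}_1^{-1}\|_{op}.
\end{equation*}
Substituting this bound and collecting the coefficients of $\|\boldsymbol{F}_2-\boldsymbol{F}_1\|_{op}$, $\|\boldsymbol{D}_2-\boldsymbol{D}_1\|_{op}$ and $\|\boldsymbol{D}_2^{-1}-\boldsymbol{D}_1^{-1}\|_{op}$ should reproduce \eqref{eq:doublip1} exactly.

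The main technical point is that submultiplicativity (Lemma A.2 of \cite{clarke2026growing}) must be applied to mixed products of integral and diagonal (multiplication) operators. I will check directly that $\|\boldsymbol{D}\boldsymbol{G}\|_{op}\le\operatorname*{ess\,sup}|\delta|\,\|\boldsymbol{G}\|_{op}$ and $\|\boldsymbol{G}\boldsymbol{D}\|_{op}\le\operatorname*{ess\,sup}|\delta|\,\|\boldsymbol{G}\|_{op}$. Both follow from the column-sum formula for the kernel, since the kernel of $\boldsymbol{D}\boldsymbol{G}$ is $\delta(u)G(u,v)$ and that of $\boldsymbol{G}\boldsymbol{D}$ is $G(u,v)\delta(v)$.

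\textbf{Part (ii).} Use \eqref{eq:cond1op} to write
\begin{equation*}
\Phi_{F_2}(X^2)-\Phi_{F_1}(X^1)=(X^2-X^1)+({\bf1}-\boldsymbol{F}_2)\big(\Psi_{F_2}(X^2)-\Psi_{F_1}(X^1)\big)-(\boldsymbol{F}_2-\boldsymbol{F}_1)\Psi_{F_1}(X^1).
\end{equation*}
Bound the three terms as follows.
\begin{itemize}
\item Since $\|\boldsymbol{F}_2\|_{op}=\|\boldsymbol{D}_2^{-1}\boldsymbol{P}_2\boldsymbol{D}_2\|_{op}\le\delta_2^-\gamma_2\delta_2^+\le\delta_2^-\delta_2^+$, we get $\|{\bf1}-\boldsymbol{F}_2\|_{op}\le\beta_2$. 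So the middle term is $\beta_2$ times the bound from part (i).
\item The last term uses $\|\Psi_{F_1}(X^1)\|_{T,1}\le\delta_1^-\delta_1^+\|X^1\|_{T,1}/(1-\gamma_1)$, which comes from Lemma \ref{lem:refbound}(i) after undiagonalizing. This supplies the extra $\delta_1^-\delta_1^+\|X^1\|_{T,1}/(1-\gamma_1)$ coefficient on $\|\boldsymbol{F}_2-\boldsymbol{F}_1\|_{op}$.
\end{itemize}
Adding these up gives \eqref{eq:doublip2}.
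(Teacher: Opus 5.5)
Your proposal is correct and is essentially the paper's proof. It uses the same undiagonalizing relation $\Psi_{F_i}(X)=\boldsymbol{D}_i^{-1}\Psi_{\widetilde{P}_i}(\boldsymbol{D}_iX)$ together with Theorem~\ref{thm:DiagLipschitz} and Lemma~\ref{lem:refbound}, the same three-term splitting of $\boldsymbol{D}_2\boldsymbol{F}_2\boldsymbol{D}_2^{-1}-\boldsymbol{D}_1\boldsymbol{F}_1\boldsymbol{D}_1^{-1}$, and the same $\Phi$-identity with $\|\mathbf{1}-\boldsymbol{F}_2\|_{op}\le\beta_2$, and it gives exactly the stated constants.

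The only difference is the order of the split: you insert $\Psi_{F_2}(X^1)$ in the original coordinates first, whereas the paper splits $\|\boldsymbol{D}_2X^2-\boldsymbol{D}_1X^1\|_{T,1}$ inside the diagonalized system. Also, your sign $-(\boldsymbol{F}_2-\boldsymbol{F}_1)\Psi_{F_1}(X^1)$ in the $\Phi$-identity is the correct one; the paper writes $+$, which is harmless because only norms are used.
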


\begin{proof}
Consider the diagonalized systems using operators $\boldsymbol{\widetilde{P}}_1 = \boldsymbol{D}_1\boldsymbol{F}_1\boldsymbol{D}_1^{-1}$ and $\boldsymbol{\widetilde{P}}_2 = \boldsymbol{D}_2\boldsymbol{F}_2\boldsymbol{D}_2^{-1}$ where $\boldsymbol{D}_1, \boldsymbol{D}_2$ are positive multiplication operators with $\|\boldsymbol{\widetilde{P}}_1\|_{op} = \gamma<1$ and $\|\boldsymbol{\widetilde{P}}_2\|_{op} = \gamma_2<1$. Further let $\widetilde{X}^1=\boldsymbol{D}_1X^1$ and $\widetilde{X}^2=\boldsymbol{D}_2X^2.$

Applying Lemma \ref{lem:refbound} and Theorem \ref{thm:DiagLipschitz} (in particular Remark \ref{rem:lipschitzdiagonalized}) to the diagonalized process, we obtain 
\begin{align*}
\|\Psi_{\widetilde{P}_2}(\widetilde{X}^2)-\Psi_{\widetilde{P}_1}(\widetilde{X}^1)\|_{T,1} &\leq \|\Psi_{\widetilde{P}_2}(\widetilde{X}^2)-\Psi_{\widetilde{P}_2}(\widetilde{X}^1)\|_{T,1} + \|\Psi_{\widetilde{P}_2}(\widetilde{X}^1)-\Psi_{\widetilde{P}_1}(\widetilde{X}^1)\|_{T,1} \\
&\leq \frac{1}{1-\gamma_2}\|\widetilde{X}^2-\widetilde{X}^1\|_{T,1} + \frac{\|\widetilde{X}^1\|_{T,1}}{(1-\gamma_1)(1-\gamma_2)}\|\boldsymbol{\widetilde{P}}_2-\boldsymbol{\widetilde{P}}_1\|_{op}\,.
\end{align*}
Using the relations $\Psi_{F_i}(X^j) = \boldsymbol{D}_i^{-1}\Psi_{\widetilde{P}_i}(\boldsymbol{D}_iX^j)$ and $\Phi_F(X) = \boldsymbol{D}^{-1}\Phi_{\widetilde{P}}(\boldsymbol{DX})$ results in
\begin{align*}
\|\Psi_{F_2}(X^2)-\Psi_{F_1}(X^1)\|_{T,1} &= \|\boldsymbol{D}_2^{-1}\Psi_{\widetilde{P}_2}(\boldsymbol{D}_2X^2) - \boldsymbol{D}_1^{-1}\Psi_{\widetilde{P}_1}(\boldsymbol{D}_1X^1)\|_{T,1}\\
&\le \|\boldsymbol{D}_2^{-1}\Psi_{\widetilde{P}_2}(\boldsymbol{D}_2X^2) - \boldsymbol{D}_2^{-1}\Psi_{\widetilde{P}_1}(\boldsymbol{D}_1X^1)\|_{T,1} \\
&\quad + \|\boldsymbol{D}_2^{-1}\Psi_{\widetilde{P}_1}(\boldsymbol{D}_1X^1) - \boldsymbol{D}_1^{-1}\Psi_{\widetilde{P}_1}(\boldsymbol{D}_1X^1)\|_{T,1} \\
&\le \|\boldsymbol{D}_2^{-1}\|_{op}\|\Psi_{\widetilde{P}_2}(\boldsymbol{D}_2X^2) - \Psi_{\widetilde{P}_1}(\boldsymbol{D}_1X^1)\|_{T,1} \\
&\quad + \|\boldsymbol{D}_2^{-1} - \boldsymbol{D}_1^{-1}\|_{op}\|\Psi_{\widetilde{P}_1}(\boldsymbol{D}_1X^1)\|_{T,1}\\
&\le \frac{\|\boldsymbol{D}_2^{-1}\|_{op}}{1-\gamma_2}\|\boldsymbol{D}_2X^2-\boldsymbol{D}_1X^1\|_{T,1} \\
&\quad+ \frac{\|\boldsymbol{D}_2^{-1}\|_{op}\|\boldsymbol{D}_1X^1\|_{T,1}}{(1-\gamma_1)(1-\gamma_2)}\|\boldsymbol{\widetilde{P}}_2-\boldsymbol{\widetilde{P}}_1\|_{op} \\
&\quad + \|\boldsymbol{D}_2^{-1} - \boldsymbol{D}_1^{-1}\|_{op}\|\Psi_{\widetilde{P}_1}(\boldsymbol{D}_1X^1)\|_{T,1}\,.
\end{align*}
For the first part, we have
\begin{align*}
\frac{\|\boldsymbol{D}_2^{-1}\|_{op}}{1-\gamma_2}\|\boldsymbol{D}_2X^2-\boldsymbol{D}_1X^1\|_{T,1} & = \frac{\|\boldsymbol{D}_2^{-1}\|_{op}}{1-\gamma_2}\|\boldsymbol{D}_2X^2-\boldsymbol{D}_2X^1+\boldsymbol{D}_2X^1-\boldsymbol{D}_1X^1\|_{T,1}\\
&\le \frac{\|\boldsymbol{D}_2^{-1}\|_{op}\|\boldsymbol{D}_2\|_{op}}{1-\gamma_2}\|X^2-X^1\|_{T,1}  \\
&\quad+ \frac{\|\boldsymbol{D}_2^{-1}\|_{op}\|X^1\|_{T,1}}{1-\gamma_2}\|\boldsymbol{D}_2 - \boldsymbol{D}_1\|_{op}\,.
\end{align*}

For the second part, writing $\boldsymbol{D}_2\boldsymbol{F}_2\boldsymbol{D}_2^{-1}-\boldsymbol{D}_1\boldsymbol{F}_1\boldsymbol{D}_1^{-1} = (\boldsymbol{D}_2\boldsymbol{F}_2\boldsymbol{D}_2^{-1}-\boldsymbol{D}_2\boldsymbol{F}_2\boldsymbol{D}_1^{-1}) + (\boldsymbol{D}_2\boldsymbol{F}_2\boldsymbol{D}_1^{-1}-\boldsymbol{D}_2\boldsymbol{F}_1\boldsymbol{D}_1^{-1}) + (\boldsymbol{D}_2\boldsymbol{F}_1\boldsymbol{D}_1^{-1}-\boldsymbol{D}_1\boldsymbol{F}_1\boldsymbol{D}_1^{-1})$, we have 
\begin{align*}
& \frac{\|\boldsymbol{D}_2^{-1}\|_{op}\|\boldsymbol{D}_1X^1\|_{T,1}}{(1-\gamma_1)(1-\gamma_2)}\|\boldsymbol{\widetilde{P}}_2-\boldsymbol{\widetilde{P}}_1\|_{op} \\
&\qquad\le \frac{\|\boldsymbol{D}_2^{-1}\|_{op}\|\boldsymbol{D}_1\|_{op}\|X^1\|_{T,1}}{(1-\gamma_1)(1-\gamma_2)}\|\boldsymbol{D}_2\boldsymbol{F}_2\boldsymbol{D}_2^{-1}-\boldsymbol{D}_1\boldsymbol{F}_1\boldsymbol{D}_1^{-1}\|_{op} \\
&\qquad\le \frac{\|\boldsymbol{D}_2^{-1}\|_{op}\|\boldsymbol{D}_1\|_{op}\|X^1\|_{T,1}}{(1-\gamma_1)(1-\gamma_2)}\bigg(\|\boldsymbol{D}_2\boldsymbol{F}_2\boldsymbol{D}_2^{-1}-\boldsymbol{D}_2\boldsymbol{F}_2\boldsymbol{D}_1^{-1}\|_{op}\\
&\qquad\quad+\|\boldsymbol{D}_2\boldsymbol{F}_2\boldsymbol{D}_1^{-1}-\boldsymbol{D}_2\boldsymbol{F}_1\boldsymbol{D}_1^{-1}\|_{op} + \|\boldsymbol{D}_2\boldsymbol{F}_1\boldsymbol{D}_1^{-1}-\boldsymbol{D}_1\boldsymbol{F}_1\boldsymbol{D}_1^{-1}\|_{op}\bigg) \\
&\qquad\le  \frac{\|\boldsymbol{D}_2^{-1}\|_{op}\|\boldsymbol{D}_2\|_{op}\|\boldsymbol{D}_1\|_{op}\|\boldsymbol{F}_2\|_{op}\|X^1\|_{T,1}}{(1-\gamma_1)(1-\gamma_2)}\|\boldsymbol{D}_2^{-1} - \boldsymbol{D}_1^{-1}\|_{op} \\
&\qquad\quad+  \frac{\|\boldsymbol{D}_2^{-1}\|_{op}\|\boldsymbol{D}_2\|_{op}\|\boldsymbol{D}_1^{-1}\|_{op}\|\boldsymbol{D}_1\|_{op}\|X^1\|_{T,1}}{(1-\gamma_1)(1-\gamma_2)}\|\boldsymbol{F}_2-\boldsymbol{F}_1\|_{op}\\
&\qquad\quad+ \frac{\|\boldsymbol{D}_2^{-1}\|_{op}\|\boldsymbol{D}_1^{-1}\|_{op}\|\boldsymbol{D}_1\|_{op}\|\boldsymbol{F}_1\|_{op}\|X^1\|_{T,1}}{(1-\gamma_1)(1-\gamma_2)}\|\boldsymbol{D}_2 - \boldsymbol{D}_1\|_{op}\,.
\end{align*}

For the third part, using \eqref{eq:regbound1} for the diagonalized system in the first inequality, we have 
\begin{align*}
\|\boldsymbol{D}_2^{-1} - \boldsymbol{D}_1^{-1}\|_{op}\|\Psi_{\widetilde{P}_1}(\boldsymbol{D}_1X^1)\|_{T,1} &\le \|\boldsymbol{D}_2^{-1} - \boldsymbol{D}_1^{-1}\|_{op}\frac{\|\boldsymbol{D}_1X^1\|_{T,1}}{1-\gamma_1} \\
&\le \frac{\|\boldsymbol{D}_1\|_{op}\|X^1\|_{T,1}}{1-\gamma_1}\|\boldsymbol{D}_2^{-1} - \boldsymbol{D}_1^{-1}\|_{op}\,.
\end{align*}
Putting all of these together proves (i).

\medskip

 To show (ii), 
using the relations 
\[\Phi_{F_1}(X^1) = X^1 + (\mathbf{1}-\boldsymbol{F}_1)\Psi_{F_1}(X^1) \quad \text{and} \Phi_{F_2}(X^2) = X^2 + (\mathbf{1}-\boldsymbol{F}_2)\Psi_{F_2}(X^2),\]
we have
$$\Phi_{F_2}(X^2)-\Phi_{F_1}(X^1) = (X^2-X^1)+(\mathbf{1}-\boldsymbol{F}_2)(\Psi_{F_2}(X^2)-\Psi_{F_1}(X^1))+(\boldsymbol{F}_2-\boldsymbol{F}_1)\Psi_{F_1}(X^1)\,.$$
Note that $\|\boldsymbol{F}_2\|_{op} \le \|\boldsymbol{D}_2^{-1}\|_{op}\|\boldsymbol{D}_2\|_{op}\|\boldsymbol{\widetilde{P}}_2\|$ and so $\|{\bf1}-\boldsymbol{F}_2\|_{op} \le 1 + \|\boldsymbol{D}_2^{-1}\|_{op}\|\boldsymbol{D}_2\|_{op} \eqcolon \beta_2$. Taking the norm and using \eqref{eq:regbound1} gives
\begin{align*}
&\|\Phi_{F_2}(X^2)-\Phi_{F_1}(X^1)\|_{T,1} 
\\&\leq \|X^2-X^1\|_{T,1}+\|\mathbf{1}-\boldsymbol{F}_2\|_{op}\|\Psi_{F_2}(X^2)-\Psi_{F_1}(X^1)\|_{T,1}+\|\boldsymbol{F}_2-\boldsymbol{F}_1\|_{op}\|\Psi_{F_1}(X^1)\|_{T,1} \\
&\leq \|X^2-X^1\|_{T,1} + \beta_2\|\Psi_{F_2}(X^2)-\Psi_{F_1}(X^1)\|_{T,1} + \|\boldsymbol{F}_2-\boldsymbol{F}_1\|_{op}\|\boldsymbol{D}_1^{-1}\Psi_{\widetilde{P}_1}(\boldsymbol{D}_1X^1)\|_{T,1} \\
&\le  \|X^2-X^1\|_{T,1} + \beta_2\|\Psi_{F_2}(X^2)-\Psi_{F_1}(X^1)\|_{T,1} + \frac{\|\boldsymbol{D}_1^{-1}\|_{op}\|\boldsymbol{D}_1\|_{op}\|X^1\|_{T,1}}{1-\gamma_1}\|\boldsymbol{F}_2-\boldsymbol{F}_1\|_{op}\,.
\end{align*}
Finally, applying \eqref{eq:doublip1} results in (ii).
\end{proof}

\begin{remark}
In cases where $\boldsymbol{F}$ is motivated by certain constraints, such as the infinite-dimensional analogue of the routing matrix of an open Jackson network, it is common to impose that $\|\boldsymbol{F}\|_{op}\le1$. In this case, we have that $\|{\bf1}-\boldsymbol{F}\|_{op} \le 2$. This simplifies the above theorem by replacing $\beta_2$ with $2$ in \eqref{eq:doublip2}. This is the case that we assume in our previous work  \cite{clarke2026growing}.
\end{remark}

For any operator $\boldsymbol{F}\in\mathcal{R}$, as guaranteed in Proposition \ref{prop:OpDiag} we can always find a diagonal operator $\boldsymbol{D}$ such that $\|\boldsymbol{D}\boldsymbol{F}\boldsymbol{D}^{-1}\|_{op}<1$. However, to obtain Lipschitz results that do not involve differences between these diagonal operators, it is more desirable to diagonalize processes with the same diagonal operator. The next lemma states that this is possible as long as the original operators are close enough to each other.

\begin{lemma}
\label{lem:closeops}
Suppose that $\boldsymbol{F}_1$ and $\boldsymbol{F}_2$ are operators in $\mathcal{R}$ with digaonal operator $\boldsymbol{D}_1$ such that $\|\boldsymbol{D}_1\boldsymbol{F}_1\boldsymbol{D}_1^{-1}\|_{op} = \gamma_1<1$. Then if 
\begin{equation} \label{eq:f1f2close}
\|\boldsymbol{F}_2-\boldsymbol{F}_1\|_{op} < \frac{1-\gamma_1}{\|\boldsymbol{D}_1\|_{op}\|\boldsymbol{D}_1^{-1}\|_{op}}\,, 
\end{equation}
then we also have $\|\boldsymbol{D}_1\boldsymbol{F}_2\boldsymbol{D}_1^{-1}\|_{op}<1$.
\end{lemma}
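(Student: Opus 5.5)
The plan is a direct perturbation argument in operator norm, using only linearity of the conjugation $\boldsymbol{G}\mapsto\boldsymbol{D}_1\boldsymbol{G}\boldsymbol{D}_1^{-1}$ and submultiplicativity of $\|\cdot\|_{op}$. We write
\[
\boldsymbol{D}_1\boldsymbol{F}_2\boldsymbol{D}_1^{-1} = \boldsymbol{D}_1\boldsymbol{F}_1\boldsymbol{D}_1^{-1} + \boldsymbol{D}_1(\boldsymbol{F}_2-\boldsymbol{F}_1)\boldsymbol{D}_1^{-1}.
\]
The triangle inequality for $\|\cdot\|_{op}$ then gives
\[
\|\boldsymbol{D}_1\boldsymbol{F}_2\boldsymbol{D}_1^{-1}\|_{op}\le \gamma_1 + \|\boldsymbol{D}_1(\boldsymbol{F}_2-\boldsymbol{F}_1)\boldsymbol{D}_1^{-1}\|_{op}.
\]

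Next I bound the perturbation term by $\|\boldsymbol{D}_1\|_{op}\|\boldsymbol{F}_2-\boldsymbol{F}_1\|_{op}\|\boldsymbol{D}_1^{-1}\|_{op}$. The cleanest route is to compute at the kernel level, as in Lemma \ref{lem:diagdef}. The operator $\boldsymbol{D}_1(\boldsymbol{F}_2-\boldsymbol{F}_1)\boldsymbol{D}_1^{-1}$ has kernel $\delta(u)(F_2-F_1)(u,v)/\delta(v)$. Hence for each $v$,
\[
\int_0^1 \frac{\delta(u)}{\delta(v)}|F_2(u,v)-F_1(u,v)|\,du \le \frac{\operatorname*{ess\,sup}\delta}{\operatorname*{ess\,inf}\delta}\int_0^1|F_2(u,v)-F_1(u,v)|\,du .
\]
Taking the supremum over $v$ yields $\|\boldsymbol{D}_1\|_{op}\|\boldsymbol{D}_1^{-1}\|_{op}\|\boldsymbol{F}_2-\boldsymbol{F}_1\|_{op}$, using the identifications $\|\boldsymbol{D}_1\|_{op}=\operatorname*{ess\,sup}\delta$ and $\|\boldsymbol{D}_1^{-1}\|_{op}=(\operatorname*{ess\,inf}\delta)^{-1}$ stated in the notation section. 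Alternatively one may invoke the submultiplicativity $\|\boldsymbol{F}_1\boldsymbol{F}_2\|_{op}\le\|\boldsymbol{F}_1\|_{op}\|\boldsymbol{F}_2\|_{op}$ from Lemmas A.1--A.2 of \cite{clarke2026growing}, treating diagonal operators as admissible factors.

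Combining the two bounds with \eqref{eq:f1f2close} gives
\[
\|\boldsymbol{D}_1\boldsymbol{F}_2\boldsymbol{D}_1^{-1}\|_{op} < \gamma_1 + (1-\gamma_1) = 1,
\]
which is the claim. The only step requiring care is the second one, specifically the pointwise-in-$v$ factor $1/\delta(v)$. The supremum over $v$ in $\|\cdot\|_{op}$ interacts with an essential bound on $\delta$, so one should either read the sup as an essential sup or note that modifying $\delta$ on a null set does not change the operator. I expect this to be a minor technicality rather than a real obstacle. Finally, positivity of $\boldsymbol{D}_1\boldsymbol{F}_2\boldsymbol{D}_1^{-1}$ is inherited from $\boldsymbol{F}_2\ge0$ and $\delta>0$, so the conjugated operator in fact lies in $\widetilde{\mathcal{R}}$, as will be needed when this lemma is applied.
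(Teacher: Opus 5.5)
Your proposal is correct and is essentially the paper's proof: the same splitting $\boldsymbol{D}_1\boldsymbol{F}_2\boldsymbol{D}_1^{-1} = \boldsymbol{D}_1\boldsymbol{F}_1\boldsymbol{D}_1^{-1} + \boldsymbol{D}_1(\boldsymbol{F}_2-\boldsymbol{F}_1)\boldsymbol{D}_1^{-1}$, then the triangle inequality and the bound $\|\boldsymbol{D}_1\|_{op}\|\boldsymbol{F}_2-\boldsymbol{F}_1\|_{op}\|\boldsymbol{D}_1^{-1}\|_{op}$. Your kernel-level check of that bound and your note on the essential-sup technicality are extra care the paper omits, not a different route.
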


\begin{proof}
The claim follows from the triangle inequality:
\begin{align*}
\|\boldsymbol{D}_1\boldsymbol{F}_2\boldsymbol{D}_1^{-1}\|_{op} &\le \|\boldsymbol{D}_1\boldsymbol{F}_1\boldsymbol{D}_1^{-1}\|_{op} + \|\boldsymbol{D}_1(\boldsymbol{F}_2-\boldsymbol{F}_1)\boldsymbol{D}_1^{-1}\|_{op} \\
&\le \gamma_1 + \|\boldsymbol{D}_1\|_{op}\|\boldsymbol{F}_2-\boldsymbol{F}_1\|_{op}\|\boldsymbol{D}_1^{-1}\|_{op}\,.
\end{align*}
Hence, if $\|\boldsymbol{F}_2-\boldsymbol{F}_1\|_{op} < \frac{1-\gamma_1}{\|\boldsymbol{D}_1\|_{op}\|\boldsymbol{D}_1^{-1}\|_{op}}$, then $\|\boldsymbol{D}_1\boldsymbol{F}_2\boldsymbol{D}_1^{-1}\|_{op}<1$ as required.
\end{proof}

\begin{remark}
Under Assumption \ref{assumption:generalF}, if $\boldsymbol{D}_1$ is the diagonal matrix associated with $\boldsymbol{F}_1$ from Proposition \ref{prop:OpDiag} (the dominant right eigenfunction of $\boldsymbol{F}^T$), then both $\|\boldsymbol{D}_1\|_{op}$ and $\|\boldsymbol{D}_1^{-1}\|_{op}$ are finite and so there will always be a nontrivial region of operators that satisfy the required inequality in Lemma \ref{lem:closeops}. In this way, Lemma \ref{lem:closeops} states that for two reflection operators close enough to each other, one can use the diagonal operator associated with the other, for example,  the one guaranteed by Proposition \ref{prop:OpDiag}. While this shows that a diagonal operator $\boldsymbol{D}$ associated with a reflection operator $\boldsymbol{F}$ is not unique, it turns out that diagonalization using the dominant eigenfunction provides the smallest possible value of $\|\boldsymbol{D}\boldsymbol{F}\boldsymbol{D}^{-1}\|_{op}$. 
\end{remark}

The above lemma motivates a modification of Theorem \ref{thm:doublelipschitz} that removes the last two terms of \eqref{eq:doublip1} and \eqref{eq:doublip2} allowing for Lipschitz continuity in both the free process and reflection operator.

\begin{theorem}
\label{thm:doublelipschitzsame}
Take $X^1\in\mathcal{D}_T(L_1)$ and $X^2\in\mathcal{D}_T(L_1)$. Let $(\Phi_{F_1}(X^1),\Psi_{F_1}(X^1))$ and $(\Phi_{F_2}(X^2),\Psi_{F_2}(X^2))$ be the corresponding reflected processes and regulators associated with operators $\boldsymbol{F}_1\in\mathcal{R}$ and $\boldsymbol{F}_2\in\mathcal{R}$ both satisfying Assumption \ref{assumption:generalF}. Let $\boldsymbol{D}$ be a positive diagonal operator such that $\|\boldsymbol{D}\boldsymbol{F}_1\boldsymbol{D}^{-1}\|_{op} = \gamma_1<1$ and $\|\boldsymbol{D}\boldsymbol{F}_2\boldsymbol{D}^{-1}\|_{op} = \gamma_2$, and let $\kappa \coloneq \|\boldsymbol{D}^{-1}\|_{op}\|\boldsymbol{D}\|_{op}$.
Then if \eqref{eq:f1f2close} holds,
then $\gamma \coloneq \max(\gamma_1,\gamma_2)<1$ and the following holds:
\begin{enumerate}[label=\roman*]
\item [(i)]
\begin{equation} \label{eq:doublip1cor}
\begin{split}
\|\Psi_{F_2}(X^2)-\Psi_{F_1}(X^1)\|_{T,1} &\le \frac{\kappa}{1-\gamma}\|X^2-X^1\|_{T,1} \\
&\quad + \frac{\kappa^2\|X^1\|_{T,1}}{(1-\gamma)^2}\|\boldsymbol{F}_2-\boldsymbol{F}_1\|_{op}\,,
\end{split}
\end{equation}
\item [(ii)]
\begin{equation} \label{eq:doublip2cor}
\begin{split}
\|\Phi_{F_2}(X^2)-\Phi_{F_1}(X^1)\|_{T,1} 
&\le \bigg(1+\frac{(1+\kappa)\kappa}{1-\gamma_1}\bigg)\|X^2-X^1\|_{T,1}  \\
&\quad + \bigg(\frac{(1+\kappa)\kappa^2\|X^1\|_{T,1}}{(1-\gamma_1)^2} + \frac{\kappa\|X^1\|_{T,1}}{1-\gamma_1}\bigg)\|\boldsymbol{F}_2-\boldsymbol{F}_1\|_{op}\,.
\end{split}
\end{equation}
\end{enumerate}
\end{theorem}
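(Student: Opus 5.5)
The proof specializes Theorem \ref{thm:doublelipschitz} to $\boldsymbol{D}_1=\boldsymbol{D}_2=\boldsymbol{D}$, after first checking that this common choice is admissible. By Lemma \ref{lem:closeops}, condition \eqref{eq:f1f2close} (with $\boldsymbol{D}_1=\boldsymbol{D}$) gives $\gamma_2=\|\boldsymbol{D}\boldsymbol{F}_2\boldsymbol{D}^{-1}\|_{op}<1$. Since $\gamma_1<1$ by hypothesis, $\gamma=\max(\gamma_1,\gamma_2)<1$. Both diagonalized operators $\boldsymbol{\widetilde{P}}_i=\boldsymbol{D}\boldsymbol{F}_i\boldsymbol{D}^{-1}$ therefore lie in $\widetilde{\mathcal{R}}$. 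Because $\boldsymbol{D}$ is strictly positive with $\|\boldsymbol{D}\|_{op},\|\boldsymbol{D}^{-1}\|_{op}<\infty$ (as in Definition \ref{rem:rephrase}), the relations $\Psi_{F_i}(X)=\boldsymbol{D}^{-1}\Psi_{\widetilde{P}_i}(\boldsymbol{D}X)$ and $\Phi_{F_i}(X)=\boldsymbol{D}^{-1}\Phi_{\widetilde{P}_i}(\boldsymbol{D}X)$ hold, with existence and uniqueness supplied by Lemma \ref{lem:pifixpoint} and the proof of Theorem \ref{thm:ExtUni}. This means the hypotheses of Theorem \ref{thm:doublelipschitz} are satisfied with $\boldsymbol{D}_1=\boldsymbol{D}_2=\boldsymbol{D}$.

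With this choice, $\|\boldsymbol{D}_2-\boldsymbol{D}_1\|_{op}=\|\boldsymbol{D}_2^{-1}-\boldsymbol{D}_1^{-1}\|_{op}=0$, so the last two lines of \eqref{eq:doublip1} and \eqref{eq:doublip2} vanish. We also have $\delta^-_i\delta^+_i=\kappa$ and $\beta_2=1+\kappa$. Substituting into \eqref{eq:doublip1} gives
\[
\|\Psi_{F_2}(X^2)-\Psi_{F_1}(X^1)\|_{T,1}\le \frac{\kappa}{1-\gamma_2}\|X^2-X^1\|_{T,1}+\frac{\kappa^2\|X^1\|_{T,1}}{(1-\gamma_1)(1-\gamma_2)}\|\boldsymbol{F}_2-\boldsymbol{F}_1\|_{op}.
\]
Since $1-\gamma_i\ge 1-\gamma$, this is bounded by \eqref{eq:doublip1cor}. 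Substituting into \eqref{eq:doublip2} in the same way gives a bound of the form \eqref{eq:doublip2cor}, with coefficients $1+\frac{(1+\kappa)\kappa}{1-\gamma_2}$ and $\frac{(1+\kappa)\kappa^2\|X^1\|_{T,1}}{(1-\gamma_1)(1-\gamma_2)}+\frac{\kappa\|X^1\|_{T,1}}{1-\gamma_1}$.

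The only delicate point is the denominators in (ii). The stated bound \eqref{eq:doublip2cor} has $1-\gamma_1$ where the substitution produces $1-\gamma_2$, and replacing $1-\gamma_2$ by $1-\gamma_1$ is legitimate only when $\gamma_2\le\gamma_1$. I would handle this by exploiting the symmetric roles of the two systems. The decomposition in the proof of Theorem \ref{thm:doublelipschitz}, which splits through $\Psi_{\widetilde{P}_2}(\widetilde{X}^1)$ and then applies Lemma \ref{lem:refbound}(ii) and Theorem \ref{thm:DiagLipschitz}, can instead be arranged to split through $\Psi_{\widetilde{P}_1}(\widetilde{X}^2)$. That arrangement uses the Lipschitz constant $1/(1-\gamma_1)$ of the $\boldsymbol{\widetilde{P}}_1$-system in the free process, and Lemma \ref{lem:refbound}(ii) is symmetric in $\gamma_1,\gamma_2$. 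If the bookkeeping still leaves a factor $1-\gamma_2$ that cannot be removed, I would fall back to writing every denominator as $1-\gamma$, using $1-\gamma_i\ge1-\gamma$ as in part (i). This yields a valid bound of the same form and is the natural reading of the statement given that $\gamma$ is introduced there. Everything else is direct substitution.
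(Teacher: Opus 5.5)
Your treatment of the common diagonalization and of part (i) is exactly the paper's proof. Lemma~\ref{lem:closeops} gives $\gamma_2<1$. Theorem~\ref{thm:doublelipschitz} with $\boldsymbol{D}_1=\boldsymbol{D}_2=\boldsymbol{D}$ then removes the $\boldsymbol{D}$-difference terms and turns $\delta_i^-\delta_i^+$ into $\kappa$ and $\beta_2$ into $1+\kappa$, and $1-\gamma_i\ge 1-\gamma$ finishes (i). The paper's proof is that one sentence and does not address the point you raise about (ii). You are right that substitution yields $1-\gamma_2$ in the first coefficient and $(1-\gamma_1)(1-\gamma_2)$ in the second. So the literal $1-\gamma_1$ in \eqref{eq:doublip2cor} is justified only when $\gamma_2\le\gamma_1$.

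Your proposed repair, splitting through $\Psi_{\widetilde{P}_1}(\widetilde{X}^2)$, does not work. It does put $1/(1-\gamma_1)$ on $\|X^2-X^1\|_{T,1}$. But the $\|\boldsymbol{F}_2-\boldsymbol{F}_1\|_{op}$ term then carries $\|X^2\|_{T,1}$ instead of $\|X^1\|_{T,1}$. It also still has the denominator $(1-\gamma_1)(1-\gamma_2)$, because Lemma~\ref{lem:refbound}(ii) always produces both factors; its symmetry is precisely why.

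That product cannot be improved. Take constant kernels $F_i\equiv c_i$ with $0<c_1<c_2<1$; these satisfy Assumption~\ref{assumption:generalF}, and \eqref{eq:f1f2close} holds with $\boldsymbol{D}=\mathbf{1}$. Take $X_u(t)=-t$. Then $\Psi_{F_i}(X)_u(t)=t/(1-c_i)$, and \eqref{eq:regbound2} holds with equality. Hence no bound on $\|\Psi_{F_2}(X^2)-\Psi_{F_1}(X^1)\|_{T,1}$ with denominators in $\gamma_1$ alone survives $c_2\uparrow 1$.

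Both your argument and the paper's control the $\Phi$-difference through $\beta_2\|\Psi_{F_2}(X^2)-\Psi_{F_1}(X^1)\|_{T,1}$. What this route actually proves is therefore your fallback, with every denominator equal to $1-\gamma$. That is also the form the paper itself uses downstream in Theorem~\ref{thm:seqlipschitz} (with $\hat\gamma$). Drop the symmetry idea and state (ii) that way. Proving \eqref{eq:doublip2cor} literally as written would need a genuinely different argument, which neither you nor the paper supplies.
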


\begin{proof}
$\gamma_2<1$ follows from Lemma \ref{lem:closeops}. Applying this to Theorem \ref{thm:doublelipschitz} completes the proof. 
\end{proof}

In Theorem \ref{thm:doublelipschitz} we have shown that given any free processes $X_1$ and $X_2$ in $\mathcal{D}_T(L_1)$, and reflection operators $\boldsymbol{F}_1$, $\boldsymbol{F}_2$ in $\mathcal{R}$ with respective diagonal operators $\boldsymbol{D}_1$ and $\boldsymbol{D}_2$, both the regulator and reflection maps have Lipschitz-like properties, with the addition of terms involving the distance between $\boldsymbol{D}_1$ and $\boldsymbol{D}_2$. Theorem \ref{thm:doublelipschitzsame} eliminates these terms by showing that for $\boldsymbol{F}_1$ and $\boldsymbol{F}_2$ close enough we can choose a common diagonal operator. We now extend these two ideas to a sequence of reflection operators $\{\boldsymbol{F}_n\}_{n\geq1}\in\mathcal{R}$ converging to some $\boldsymbol{F}\in\mathcal{R}$. With the aid of the following lemma, we will show in Theorem \ref{thm:seqlipschitz} that for sufficiently large $n$, the regulator and reflected processes will be Lipschitz in both the free process and the reflection operator.

\begin{lemma}
\label{lem:diagsequence}
Take $\boldsymbol{F}\in\mathcal{R}$ satisfying Assumption \ref{assumption:generalF} and let $\boldsymbol{D}$ be a positive multiplication operator such that $\|\boldsymbol{D}\boldsymbol{F}\boldsymbol{D}^{-1}\|_{op}=\gamma<1$ as in Proposition \ref{prop:OpDiag}. Consider a sequence $\{\boldsymbol{F}_n\}_{n\geq1}\in\mathcal{R}$ such that $\|\boldsymbol{F}_n-\boldsymbol{F}\|_{op}\to0$ as $n\to\infty$. Then 
$$\|\boldsymbol{D}\boldsymbol{F}_n\boldsymbol{D}^{-1}\|_{op}\eqcolon\gamma_n\to\gamma \quad \mbox{as }n\to\infty\,.$$
In particular, for sufficiently large $n$,
$$\gamma_n<1\,.$$
\end{lemma}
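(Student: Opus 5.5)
The plan is to compare $\boldsymbol{D}\boldsymbol{F}_n\boldsymbol{D}^{-1}$ with $\boldsymbol{D}\boldsymbol{F}\boldsymbol{D}^{-1}$ directly in operator norm, exactly as in the proof of Lemma \ref{lem:closeops}. The key input is Proposition \ref{prop:OpDiag} together with Lemma \ref{lem:kreinrutman}. They guarantee that $\boldsymbol{D}=\operatorname{diag}(\delta)$ satisfies $\|\boldsymbol{D}\|_{op}=\operatorname*{ess\,sup}\delta<\infty$ and $\|\boldsymbol{D}^{-1}\|_{op}=(\operatorname*{ess\,inf}\delta)^{-1}<\infty$. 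Hence $\kappa\coloneq\|\boldsymbol{D}\|_{op}\|\boldsymbol{D}^{-1}\|_{op}$ is a finite constant that does not depend on $n$.

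The first step is the submultiplicativity estimate. By linearity, $\boldsymbol{D}\boldsymbol{F}_n\boldsymbol{D}^{-1}-\boldsymbol{D}\boldsymbol{F}\boldsymbol{D}^{-1}=\boldsymbol{D}(\boldsymbol{F}_n-\boldsymbol{F})\boldsymbol{D}^{-1}$. I would check this at the level of kernels: the difference has kernel $\delta(u)(F_n(u,v)-F(u,v))/\delta(v)$, as in Lemma \ref{lem:diagdef}. Taking $\sup_v\int_0^1|\cdot|\,du$ then gives
\begin{equation*}
\|\boldsymbol{D}(\boldsymbol{F}_n-\boldsymbol{F})\boldsymbol{D}^{-1}\|_{op}\le \kappa\,\|\boldsymbol{F}_n-\boldsymbol{F}\|_{op}.
\end{equation*}
This can also be read off from the submultiplicativity property for operator norms stated in the notation section. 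The only care needed is that the diagonal operators are not integral operators in $\boldsymbol{\mathcal{G}}$, so I would verify the bound at the kernel level: pull out $\operatorname*{ess\,sup}\delta(u)$ inside the $du$-integral and $1/\delta(v)\le\|\boldsymbol{D}^{-1}\|_{op}$ outside, which holds for a.e.\ $v$.

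The second step applies the reverse triangle inequality for $\|\cdot\|_{op}$:
\begin{equation*}
|\gamma_n-\gamma|=\big|\|\boldsymbol{D}\boldsymbol{F}_n\boldsymbol{D}^{-1}\|_{op}-\|\boldsymbol{D}\boldsymbol{F}\boldsymbol{D}^{-1}\|_{op}\big|\le\kappa\|\boldsymbol{F}_n-\boldsymbol{F}\|_{op}\to0.
\end{equation*}
This proves $\gamma_n\to\gamma$.

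For the final claim, set $\varepsilon=1-\gamma>0$ and choose $N$ such that $\|\boldsymbol{F}_n-\boldsymbol{F}\|_{op}<\varepsilon/\kappa$ for all $n\ge N$. Then $\gamma_n<\gamma+\varepsilon=1$. Equivalently, Lemma \ref{lem:closeops} applies for all $n\ge N$.

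The only mildly delicate point is the finiteness of $\kappa$, meaning $\delta$ is bounded away from $0$ and $\infty$. This is exactly what Assumption \ref{assumption:generalF} provides through Lemma \ref{lem:kreinrutman}, and it is where the assumption is needed. The rest is routine.
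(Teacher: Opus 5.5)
Your proposal is correct and follows the paper's proof essentially verbatim: you combine the reverse triangle inequality for $\|\cdot\|_{op}$ with the bound $\|\boldsymbol{D}(\boldsymbol{F}_n-\boldsymbol{F})\boldsymbol{D}^{-1}\|_{op}\le\|\boldsymbol{D}\|_{op}\|\boldsymbol{F}_n-\boldsymbol{F}\|_{op}\|\boldsymbol{D}^{-1}\|_{op}$, and both norms of $\boldsymbol{D}$ are finite by Proposition \ref{prop:OpDiag}. Your kernel-level check of that bound and the explicit choice of $N$ are harmless additions that the paper leaves implicit.
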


\begin{proof}
The first claim follows from the argument below:

\begin{align*}
\left| \|\boldsymbol{D}\boldsymbol{F}_n\boldsymbol{D}^{-1}\|_{op} - \|\boldsymbol{D}\boldsymbol{F}\boldsymbol{D}^{-1}\|_{op} \right| &\le \|\boldsymbol{D}(\boldsymbol{F}_n-\boldsymbol{F})\boldsymbol{D}^{-1}\|_{op} \\
&\le \|\boldsymbol{D}\|_{op}\|\boldsymbol{F}_n-\boldsymbol{F}\|_{op}\|\boldsymbol{D}^{-1}\|_{op}\,,
\end{align*}
and hence 
$\|\boldsymbol{D}\boldsymbol{F}_n\boldsymbol{D}^{-1}\|_{op} \to \|\boldsymbol{D}\boldsymbol{F}\boldsymbol{D}^{-1}\|_{op} = \gamma$ as $n\to\infty$.
The second claim is subsequently immediate.
\end{proof}

\begin{theorem} \label{thm:seqlipschitz}
Take $X\in\mathcal{D}_T(L_1)$ and let $\{X^n\}_{n\geq1}$ be a sequence in $\mathcal{D}_T(L_1)$. Let $(\Phi_F(X),\Psi_F(X))$ and $\left(\{\Phi_{F_n}(X_n)\}_{n\geq1},\{\Psi_{F_n}(X_n)\}_{n\geq1}\right)$ be the corresponding reflected processes and regulators associated with operators $\boldsymbol{F}\in\mathcal{R}$ and $\{\boldsymbol{F}_n\}_{n\geq1}\in\mathcal{R}$ satisfying Assumption \ref{assumption:generalF} respectively. Let $\boldsymbol{D}$ be the positive diagonal operator associated with the diagonalized process of $\boldsymbol{F}$ and let $\kappa \coloneq \|\boldsymbol{D}^{-1}\|_{op}\|\boldsymbol{D}\|_{op}$. If $\|\boldsymbol{F_n}-\boldsymbol{F}\|_{op} \to 0$, then for some $\hat{\gamma}\in(0,1)$ and sufficiently large $n$,
\begin{enumerate}[label=\roman*]
\item [(i)]
\begin{equation}
\|\Psi_{F_n}(X^n)-\Psi_F(X)\|_{T,1} \leq \frac{\kappa}{1-\hat{\gamma}}\|X^n-X\|_{T,1} + \frac{\kappa^2\|X\|_{T,1}}{(1-\hat{\gamma})^2}\|\boldsymbol{F}_n-\boldsymbol{F}\|_{op}\,,
\end{equation}
\item [(ii)]
\begin{equation}
\begin{split}
\|\Phi_{F_n}(X^n)-\Phi_F(X)\|_{T,1} 
&\leq \left(1 +\frac{(1+\kappa)\kappa}{1-\hat{\gamma}}\right)\|X^n-X\|_{T,1} \\&+ \kappa\left(\frac{(1+\kappa)\kappa}{(1-\hat{\gamma})^2} + \frac{1}{1-\hat{\gamma}}\right)\|X\|_{T,1}\|\boldsymbol{F}_n-\boldsymbol{F}\|_{op}\,.
\end{split}
\end{equation}
\end{enumerate}
In particular, if $\|X^n-X\|_{T,1}\to0$ as $n\to\infty$, then
\begin{equation}    
    \|\Psi_{F_n}(X^n)-\Psi_F(X)\|_{T,1} \to 0 \quad\mbox{and}\quad\|\Phi_{F_n}(X^n)-\Phi_F(X)\|_{T,1} \to 0\,.
\end{equation}
\end{theorem}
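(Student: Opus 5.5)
The plan is to reduce to Theorem \ref{thm:doublelipschitzsame} with the common diagonal operator $\boldsymbol{D}$ associated with $\boldsymbol{F}$, as provided by Proposition \ref{prop:OpDiag}. Write $\gamma = \|\boldsymbol{D}\boldsymbol{F}\boldsymbol{D}^{-1}\|_{op} = \rho(\boldsymbol{F}) < 1$ and $\gamma_n = \|\boldsymbol{D}\boldsymbol{F}_n\boldsymbol{D}^{-1}\|_{op}$. By Lemma \ref{lem:diagsequence}, $\gamma_n \to \gamma$. Fix any $\hat{\gamma} \in (\gamma, 1)$, for instance $\hat{\gamma} = (1+\gamma)/2$. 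Then there is $N$ such that for all $n \ge N$ we have $\gamma_n \le \hat{\gamma}$, and also, since $\|\boldsymbol{F}_n - \boldsymbol{F}\|_{op} \to 0$, the closeness condition \eqref{eq:f1f2close} with $\boldsymbol{F}_1 = \boldsymbol{F}$, $\boldsymbol{D}_1 = \boldsymbol{D}$ holds, namely $\|\boldsymbol{F}_n - \boldsymbol{F}\|_{op} < (1-\gamma)/\kappa$. Together these give $\max(\gamma,\gamma_n) \le \hat{\gamma} < 1$ for $n \ge N$.

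For such $n$, I would apply Theorem \ref{thm:doublelipschitzsame} with $X^1 = X$, $\boldsymbol{F}_1 = \boldsymbol{F}$, $X^2 = X^n$, $\boldsymbol{F}_2 = \boldsymbol{F}_n$ and the common operator $\boldsymbol{D}$. Part (i) of that theorem gives the bound with $\gamma_{\max} = \max(\gamma, \gamma_n)$ in the denominators. Since $t \mapsto 1/(1-t)$ and $t \mapsto 1/(1-t)^2$ are increasing on $[0,1)$, each $\gamma_{\max}$ may be replaced by $\hat{\gamma}$, which yields (i).

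For (ii), the bound \eqref{eq:doublip2cor} involves only $\gamma_1 = \gamma \le \hat{\gamma}$, so the same monotonicity gives the stated inequality. Factoring $\kappa\|X\|_{T,1}$ out of the $\|\boldsymbol{F}_n - \boldsymbol{F}\|_{op}$ term matches the displayed form exactly. If one is worried about the $\beta_2$ constant, recall that in Theorem \ref{thm:doublelipschitz} with $\boldsymbol{D}_1 = \boldsymbol{D}_2 = \boldsymbol{D}$ one has $\beta_2 = 1 + \kappa$. Also, the $\|\boldsymbol{D}_2 - \boldsymbol{D}_1\|_{op}$ and $\|\boldsymbol{D}_2^{-1} - \boldsymbol{D}_1^{-1}\|_{op}$ terms vanish, so the derivation goes through directly.

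The final convergence claim then follows immediately. If $\|X^n - X\|_{T,1} \to 0$ and $\|\boldsymbol{F}_n - \boldsymbol{F}\|_{op} \to 0$, the right-hand sides of (i) and (ii) tend to zero, because $\|X\|_{T,1}$, $\kappa$ and $\hat{\gamma}$ are fixed.

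The only delicate point is that Theorem \ref{thm:doublelipschitzsame} is stated for $\boldsymbol{F}_n$ satisfying Assumption \ref{assumption:generalF}. This is assumed in the statement. Moreover, existence and uniqueness for $\boldsymbol{F}_n$ really only needs some strictly positive bounded $\boldsymbol{D}$ with $\|\boldsymbol{D}\boldsymbol{F}_n\boldsymbol{D}^{-1}\|_{op} < 1$, which $\boldsymbol{D}$ itself supplies for $n \ge N$. So I do not expect a genuine obstacle; the main care is choosing $\hat{\gamma}$ uniformly in $n$, which Lemma \ref{lem:diagsequence} handles.
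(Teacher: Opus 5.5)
Your proposal is correct and is essentially the paper's proof. You fix the common $\boldsymbol{D}$ coming from $\boldsymbol{F}$, use Lemma~\ref{lem:diagsequence} to get $N$ and a uniform $\hat\gamma<1$ dominating both $\gamma$ and $\gamma_n$ for $n\ge N$ (the paper takes $\hat\gamma=\max(\gamma,\sup_{n\ge N}\gamma_n)$, you take $(1+\gamma)/2$; either works), and apply Theorem~\ref{thm:doublelipschitz} with $\boldsymbol{D}_1=\boldsymbol{D}_2=\boldsymbol{D}$, $X^1=X$, $\boldsymbol{F}_1=\boldsymbol{F}$, so that $\beta_2=1+\kappa$ and the $\boldsymbol{D}$-difference terms vanish. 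For (ii), rely on this direct route, bounding both $1/(1-\gamma)$ and $1/(1-\gamma_n)$ by $1/(1-\hat\gamma)$, rather than on the $\gamma_1$-only form of \eqref{eq:doublip2cor}, because the derivation from Theorem~\ref{thm:doublelipschitz} also produces factors $1/(1-\gamma_2)$ there.
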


\begin{proof}
Consider the diagonalized systems using operators $\boldsymbol{\widetilde{P}} = \boldsymbol{D}\boldsymbol{F}\boldsymbol{D}^{-1}$ and $\boldsymbol{\widetilde{P}}_n = \boldsymbol{D}\boldsymbol{F}_n\boldsymbol{D}^{-1}$ where $\boldsymbol{D}$ is a positive multiplication operator with $\|\boldsymbol{\widetilde{P}}\|_{op} = \gamma<1$ and $\|\boldsymbol{\widetilde{P}}_n\|_{op} = \gamma_n<1$ for sufficiently large $n$, say $n\geq N$ by Lemma \ref{lem:diagsequence}. We also note that $\sup_{n\geq N}\gamma_n < 1$ since $\gamma_n\to\gamma$ as $n\to\infty$. Let $\hat{\gamma} = \max(\gamma, \sup_{n\geq N}\gamma_n)$. 
Finally, noticing that $\frac{1}{1-\gamma} \le \frac{1}{1-\hat{\gamma}}$ and $\frac{1}{1-\gamma_n} \le \frac{1}{1-\hat{\gamma}}$ for $n\ge N$, we can apply Theorem \ref{thm:doublelipschitz} for the result.
\end{proof}

\begin{remark}
Theorem \ref{thm:seqlipschitz} shows convergence of systems where both the free process and the reflection operator converge, stating that the regulator and reflected process will also converge. This is particularly useful when dealing with convergence from a finite to an infinite-dimensional system, where the finite-dimensional system can be represented by blockwise-constant processes and operators. This is the case in our previous work \cite{clarke2026growing}. Because the continuity is Lipschitz in both the free process and the reflection operator, if one can show that both of these converge at a particular rate, then one can also derive rates of convergence for the regulator and reflection map.
\end{remark}

\subsection{Lipschitz Continuity in the $J_1$ topology} \label{subsec:skorlip}

While the uniform metric $\|\cdot\|_{T,1}$ provides a suitable framework for establishing existence, uniqueness, and Lipschitz continuity of the Skorokhod map, in many cases it may be too restrictive when considering convergence of càdlàg processes. In many applications (such as queueing and stochastic networks, chemical reaction networks, biological and population systems),
one is interested in sequences of processes whose jump magnitudes converge, but whose jump times may exhibit small perturbations. The uniform topology does not accommodate for such cases, whereas the Skorokhod $J_1$ topology is specifically designed to identify paths that are close up to a small deformation of the time parameter.
Hence, we establish analogues of the Lipschitz continuity properties in Section \ref{subsec:uniflip} under a Skorokhod-type topology adapted to the present infinite-dimensional setting.

Let $\Lambda:[0,T]\to[0,T]$ be the space of all increasing and onto right-continuous functions. For a slight abuse of notation, when $f\in\mathcal{D}_T(L_1)$ and $\lambda\in\Lambda$ we define the composition $f\circ\lambda$ by $(f\circ\lambda)_u(t) = f_u(\lambda(t))$ for each $u \in [0,1]$. 
Define the metric
\begin{equation} \label{eq:skotop}
d_{T,J_1} (f,g)= \inf_{\lambda\in\Lambda} \Big\{\|f\circ\lambda-g\|_{T,1}\lor\sup_{0\le t\le T}|\lambda(t)-t|\Big\}\,.
\end{equation}

Before proceeding, we first present a proposition that provides solutions to the infinite-dimesional Skorokhod problem under rescaling.

\begin{prop}[Rescaling property of the reflection map]
\label{prop:rescale}
Take any
$X\in \mathcal{D}_T(L_1)$ and $\boldsymbol{F}\in\mathcal{R}$ satisfying Assumption \ref{assumption:generalF},
with any $w\in L_1[0,1]$, $\alpha>0$. Let $\gamma$ be a nondecreasing right-continuous map from $[0,T']$ to $[0,T]$. Then
\begin{equation}
\Psi_F(w + \alpha (X\circ\gamma)) = \alpha\Psi_F(\alpha^{-1}w+X)\circ\gamma
\end{equation}
\begin{equation}
\Phi_F(w + \alpha (X\circ\gamma)) = \alpha\Phi_F(\alpha^{-1}w+X)\circ\gamma
\end{equation}
\end{prop}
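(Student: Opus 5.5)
Write $\widehat X := w+\alpha(X\circ\gamma)$. The plan is to use uniqueness (Theorem \ref{thm:ExtUni}, applied on the horizon $[0,T']$): we exhibit a candidate pair and verify \eqref{eq:cond1op}--\eqref{eq:cond3op} for input $\widehat X$. Set $\bar X:=\alpha^{-1}w+X\in\mathcal{D}_T(L_1)$, and let $(\bar Y,\bar Z)=(\Psi_F(\bar X),\Phi_F(\bar X))$ be the unique solution on $[0,T]$. The candidate is
\[\widehat Y:=\alpha\,\bar Y\circ\gamma,\qquad \widehat Z:=\alpha\,\bar Z\circ\gamma.\]
One should also note that $\widehat X(0)\ge0$ exactly when $\bar X(\gamma(0))\ge 0$. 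The statement implicitly needs $\gamma(0)=0$ (as for $\lambda\in\Lambda$), or else an initial condition that keeps $Y(0)=0$ consistent, so I will assume $\gamma(0)=0$.

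First I check membership and the equation. Since $\gamma$ is nondecreasing and right-continuous, each $\bar Y_u\circ\gamma$ is càdlàg, and $\sup_{t\le T'}|\bar Y_u(\gamma(t))|\le\sup_{s\le T}|\bar Y_u(s)|$. Hence $\widehat Y,\widehat Z\in\mathcal{D}_{T'}(L_1)$. Because $\boldsymbol F$ acts only in $u$, it commutes with time change and scaling, so $(\boldsymbol F(\bar Y\circ\gamma))=(\boldsymbol F\bar Y)\circ\gamma$. This gives
\[\widehat X+({\bf1}-\boldsymbol F)\widehat Y=\alpha\big(\bar X+({\bf1}-\boldsymbol F)\bar Y\big)\circ\gamma=\alpha\bar Z\circ\gamma=\widehat Z\ge 0,\]
using $w=\alpha\cdot\alpha^{-1}w$, which is constant in time.

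Next come monotonicity and the initial value. Since $\alpha>0$ and $\gamma$ is nondecreasing, $\widehat Y_u$ is nondecreasing, and $\widehat Y(0)=\alpha\bar Y(\gamma(0))=\alpha\bar Y(0)=0$.

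The complementarity condition is the main obstacle. It requires that $\widehat Y_u$ increases only at times where $\widehat Z_u=0$. The cleanest route avoids the informal "$Z\,dY=0$" and uses the integral form $\int_0^{T'}\widehat Z_u(t)\,d\widehat Y_u(t)=0$. The measure $d(\bar Y_u\circ\gamma)$ is the pushforward-type image of $d\bar Y_u$ restricted to the range of $\gamma$. When $\gamma$ jumps over an interval $(\gamma(t-),\gamma(t)]$, the increment $\bar Y_u(\gamma(t))-\bar Y_u(\gamma(t-))$ is charged to the point $t$, and there $\widehat Z_u(t)=\alpha\bar Z_u(\gamma(t))$. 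So I must show that if $\bar Y_u$ increases on $(\gamma(t-),\gamma(t)]$, then $\bar Z_u(\gamma(t))=0$.

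This is where I would invoke the fixed-point representation from Section \ref{sec:existunq}, in the diagonalized system with $\boldsymbol D$ from Proposition \ref{prop:OpDiag}, which commutes with time change. There $\bar Y$ is a running supremum, and $\bar Z_u(s)=0$ exactly when the supremum is attained at time $s$. Since $\bar Y_u(\gamma(t))$ is the running sup up to $\gamma(t)$, if it exceeds its value at $\gamma(t-)$ then, by right-continuity and the sup structure, I would argue that either the sup is attained at $\gamma(t)$ itself, or it is attained strictly inside the gap. The latter case is the genuinely delicate one, and it must be ruled out.

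If that case cannot be excluded in general, the fallback is a different candidate. Define $\widehat Y$ directly as the fixed point of $\pi_{\widetilde{\widehat X},\widetilde P}$ on $[0,T']$, and show that $\alpha\bar Y\circ\gamma$ satisfies the same fixed-point equation. This works because $\sup_{s\le t}f(\gamma(s))=\sup_{r\in\gamma([0,t])}f(r)$. The two agree whenever $\gamma$ is onto, i.e.\ $\gamma$ continuous (e.g.\ $\gamma\in\Lambda$, the case needed later), and I would state the result under this proviso. Lemma \ref{lem:pifixpoint} then gives the identity for $\Psi_F$. The identity for $\Phi_F$ follows from the equation computed in the first step.
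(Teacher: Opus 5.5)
\textbf{The step you flagged is false, not merely delicate.} Your route is the paper's. It builds the same candidate $(\alpha\bar Y\circ\gamma,\alpha\bar Z\circ\gamma)$, checks the equation and monotonicity, and asserts that the candidate vanishes at $t=0$. It then settles \eqref{eq:cond3op} in one sentence, claiming that composition with any nondecreasing map preserves complementarity. The case you hoped to rule out (the running supremum attained strictly inside a gap $(\gamma(t-),\gamma(t)]$ of the range of $\gamma$) genuinely occurs, so the proposition is false as stated.

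Here is a counterexample:
\begin{itemize}
\item Take $T=T'=1$, $w=0$, $\alpha=1$ and $F(u,v)\equiv c\in(\tfrac12,1)$; Lemma \ref{lem:doeblintype} holds with $m=1$, $\nu\equiv1$, $\epsilon=c$.
\item For every $u$, let $X_u(s)=-1$ for $s\in[1/3,2/3)$ and $X_u(s)=0$ otherwise.
\item Checking \eqref{eq:cond1op}--\eqref{eq:cond3op} directly and using uniqueness, $\Psi_F(X)_u(t)=(1-c)^{-1}$ for $t\ge 1/3$ and $0$ before. Likewise $\Phi_F(X)_u(t)=1$ for $t\ge 2/3$ and $0$ before.
\item Let $\gamma(t)=0$ for $t<1/2$ and $\gamma(t)=1$ for $t\ge 1/2$. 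This $\gamma$ is nondecreasing and right-continuous, with $\gamma(0)=0$.
\item Then $X\circ\gamma\equiv 0$, so $\Psi_F(X\circ\gamma)\equiv 0\equiv\Phi_F(X\circ\gamma)$. But $\Psi_F(X)\circ\gamma=(1-c)^{-1}$ on $[1/2,1]$.
\end{itemize}
In your notation, $\widehat Y_u$ jumps at $t=1/2$ while $\widehat Z_u(1/2)=\bar Z_u(1)=1>0$. The constant map $\gamma\equiv 1$ breaks the identity already at $t=0$, which confirms your remark that $\gamma(0)=0$ is needed.

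\textbf{What to do instead.} Drop the first branch and make your fallback a hypothesis. What is needed is $\gamma([0,t])=[0,\gamma(t)]$ for all $t$, i.e.\ $\gamma$ continuous with $\gamma(0)=0$; surjectivity onto $[0,T]$ is not required. You also need $\alpha^{-1}w+X(0)\ge 0$ so that Theorem \ref{thm:ExtUni} applies to $\bar X$. Under this hypothesis your fixed-point argument is correct:
\begin{itemize}
\item $\boldsymbol D$ and $\boldsymbol{\widetilde P}$ act only in $u$, so they commute with the time change.
\item $\sup_{s\le t}f(\gamma(s))=\sup_{r\le\gamma(t)}f(r)$, so $\alpha(\boldsymbol D\bar Y)\circ\gamma$ is a fixed point of $\pi_{\boldsymbol D\widehat X,\widetilde P}$.
\item The everywhere-uniqueness in Lemma \ref{lem:pifixpoint} identifies this fixed point with $\boldsymbol D\Psi_F(\widehat X)$.
\end{itemize}

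The candidate route also closes directly. For continuous nondecreasing $\gamma$,
$\int_0^{T'}\bar Z_u(\gamma(t))\,d(\bar Y_u\circ\gamma)(t)=\int_0^{\gamma(T')}\bar Z_u(r)\,d\bar Y_u(r)=0.$
By contrast, a jump of $\gamma$ at $t$ charges all of $d\bar Y_u$ on the gap to the single point $t$ with weight $\bar Z_u(\gamma(t))$; this is exactly what the counterexample exploits.

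The restricted statement covers the only use of the proposition in the paper: $\gamma=\lambda\in\Lambda$ in Theorems \ref{thm:skolipfree} and \ref{thm:skolipboth}. Elements of $\Lambda$ are increasing bijections of $[0,T]$, hence continuous with $\lambda(0)=0$.
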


\begin{proof}
First note that $\alpha^{-1}w+X\in\mathcal{D}_T(L_1)$. Applying Theorem \ref{thm:ExtUni}, let $
Y=\Psi_F(\alpha^{-1}w+X)$ and define $Q=\Phi_F(\alpha^{-1}w+X)$.
Then
$$Q(t)=\alpha^{-1}w+X(t)+({\bf1}-\boldsymbol{F})Y(t),$$
where $Q_u(t)\geq 0$ and $Q_u(t)dY_u(t) = 0$ for all $(u,t)\in([0,1]\times[0,T])$.
For $t\in[0,T']$, define
\begin{align*}
&\widetilde{Y}(t)=\alpha Y(\gamma(t)), \\
&\widetilde{Q}(t)=\alpha Q(\gamma(t)).
\end{align*}

Since $Y$ and $\gamma$ are nondecreasing and right-continuous and $\alpha>0$, it follows that $\widetilde{Y}$ is also nondecreasing and right-continuous. Moreover, $\widetilde{Y}(0)=0$ and $\widetilde{Q}(t)\geq 0$ for all $t\in[0,T']$.

Furthermore, as $\boldsymbol{F}$ is a linear operator,
$$
\begin{aligned}
\widetilde{Q}(t)
&=\alpha\left(\alpha^{-1}w+X(\gamma(t))+({\bf1}-\boldsymbol{F})Y(\gamma(t))\right)\\
&=w+\alpha(X\circ\gamma)(t)+({\bf1}-\boldsymbol{F})\widetilde{Y}(t),.
\end{aligned}
$$

Finally, $\widetilde{Q}_u(t)d\widetilde{Y}_u(t)$ for all $(u,t)\in([0,1]\times[0,T])$, since positive scalar multiplication together with composition by a nondecreasing function preserves the complementarity property.

It is clear by construction that $w + \alpha (X\circ\gamma) \in \mathcal{D}_{T'}(L_1)$. Hence, $(\widetilde{Q},\widetilde{Y})$ is a solution of the Skorokhod problem corresponding to the free process $w+\alpha(X\circ\gamma)$. By uniqueness of solutions to the Skorokhod problem guaranteed in Theorem \ref{thm:ExtUni}, this completes the proof.
\end{proof}

\begin{remark}
While the above proposition holds for any general time rescaling, to proceed with the following Lipschitz results we will only apply the case when $w=0$, $\alpha=1$, and $\gamma\in\Lambda$. The first result, for a shared transition operator, is stated below.
\end{remark}

\begin{theorem} \label{thm:skolipfree}
Under the same conditions as Theorem \ref{thm:DiagLipschitz},
\begin{equation*}
d_{T,J_1} (\Psi_F(X^1),\Psi_F(X^2)) \le \frac{\delta_{\sup}}{\delta_{\inf}(1-\gamma)}d_{T,J_1} (X^1,X^2),
\end{equation*}
\begin{equation*}
d_{T,J_1} (\Phi_F(X^1),\Phi_F(X^2)) \le \frac{\delta_{\sup}}{\delta_{\inf}}\left(1 + \frac{2}{1-\gamma}\right)d_{T,J_1} (X^1,X^2).
\end{equation*}
\end{theorem}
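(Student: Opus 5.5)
The plan is to combine the time-change invariance of Proposition \ref{prop:rescale} with the uniform-norm Lipschitz bounds of Theorem \ref{thm:DiagLipschitz}, applied to a time-changed input. Fix any $\lambda\in\Lambda$. Apply Proposition \ref{prop:rescale} with $w=0$, $\alpha=1$ and $\gamma=\lambda$ (here $T'=T$). This gives
\begin{equation*}
\Psi_F(X^1\circ\lambda)=\Psi_F(X^1)\circ\lambda,\qquad \Phi_F(X^1\circ\lambda)=\Phi_F(X^1)\circ\lambda .
\end{equation*}
Before using this, check that $X^1\circ\lambda\in\mathcal{D}_T(L_1)$. Each path $u\mapsto X^1_u\circ\lambda$ is c\`adl\`ag, since $\lambda$ is increasing, onto and right-continuous, hence a continuous bijection. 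Also $\sup_t|X^1_u(\lambda(t))|=\sup_t|X^1_u(t)|$, because $\lambda$ is onto, so $\|X^1\circ\lambda\|_{T,1}=\|X^1\|_{T,1}<\infty$.

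Next, apply Theorem \ref{thm:DiagLipschitz} to the pair of free processes $X^1\circ\lambda$ and $X^2$, with the same $\boldsymbol{F}$ and the same $\boldsymbol{D}$. This yields
\begin{equation*}
\|\Psi_F(X^1)\circ\lambda-\Psi_F(X^2)\|_{T,1}\le \frac{\delta_{\sup}}{\delta_{\inf}(1-\gamma)}\|X^1\circ\lambda-X^2\|_{T,1},
\end{equation*}
together with the analogous bound for $\Phi_F$ with constant $\frac{\delta_{\sup}}{\delta_{\inf}}\bigl(1+\frac{2}{1-\gamma}\bigr)$.

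Write $C$ for either constant. Both constants are at least $1$, because $\delta_{\sup}\ge\delta_{\inf}$ and $\gamma<1$. Therefore
\begin{equation*}
\|\Psi_F(X^1)\circ\lambda-\Psi_F(X^2)\|_{T,1}\lor\sup_t|\lambda(t)-t|\le C\Bigl(\|X^1\circ\lambda-X^2\|_{T,1}\lor\sup_t|\lambda(t)-t|\Bigr).
\end{equation*}
The left side bounds $d_{T,J_1}(\Psi_F(X^1),\Psi_F(X^2))$ from above, since $\lambda$ is an admissible time change in \eqref{eq:skotop}. Taking the infimum over $\lambda\in\Lambda$ on the right gives $C\,d_{T,J_1}(X^1,X^2)$. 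The argument for $\Phi_F$ is identical.

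The main obstacle is the hypothesis $X(0)\ge0$ needed for Theorem \ref{thm:ExtUni}, and hence for Proposition \ref{prop:rescale}. It is preserved because $\lambda(0)=0$: an increasing onto map of $[0,T]$ must fix $0$. The other point that needs care is the observation $C\ge1$, which is what lets the time-distortion term $\sup_t|\lambda(t)-t|$ be absorbed into the maximum.
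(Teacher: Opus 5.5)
Your proposal is correct and follows essentially the same route as the paper. Both arguments use Proposition \ref{prop:rescale} with $w=0$, $\alpha=1$ to obtain $\Psi_F(X^1)\circ\lambda=\Psi_F(X^1\circ\lambda)$ (and likewise for $\Phi_F$), then apply Theorem \ref{thm:DiagLipschitz} to the pair $X^1\circ\lambda$, $X^2$, and finally take the infimum over $\lambda\in\Lambda$. Your explicit checks make precise steps that the paper's proof uses only implicitly: that $X^1\circ\lambda$ is an admissible input with $(X^1\circ\lambda)(0)\ge 0$, and that both constants are at least $1$, which is what lets the term $\sup_t|\lambda(t)-t|$ be absorbed into the maximum.
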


\begin{proof}
Applying Theorem \ref{thm:DiagLipschitz}, we have
\begin{align*}
d_{T,J_1} (\Psi_F(X^1),\Psi_F(X^2))&= \inf_{\lambda\in\Lambda}\Big\{\|\Psi_F(X^1)\circ\lambda-\Psi_F(X^2)\|_{T,1}\lor\sup_{0\le t\le T}|\lambda(t)-t|\Big\}\\
&=\inf_{\lambda\in\Lambda}\Big\{\|\Psi_F(X^1\circ\lambda)-\Psi_F(X^2)\|_{T,1}\lor\sup_{0\le t\le T}|\lambda(t)-t|\Big\}\\
&\le\inf_{\lambda\in\Lambda}\Big\{\frac{\delta_{\sup}}{\delta_{\inf}(1-\gamma)} \|X^1\circ\lambda-X^2\|_{T,1}\lor\sup_{0\le t\le T}|\lambda(t)-t|\Big\}\\
&\le \frac{\delta_{\sup}}{\delta_{\inf}(1-\gamma)}d_{T,J_1} (X^1,X^2).
\end{align*}
The second result follows similarly.
\end{proof}

\begin{theorem} \label{thm:skolipboth}
Under the same conditions as Theorem \ref{thm:seqlipschitz},
\begin{equation*}
d_{T,J_1} (\Psi_{F_n}(X^n),\Psi_F(X)) \le \frac{\kappa}{1-\hat{\gamma}}d_{T,J_1} (X^n,X) + \frac{\kappa^2\|X\|_{T,1}}{(1-\hat{\gamma})^2}\|\boldsymbol{F}_n-\boldsymbol{F}\|_{op},
\end{equation*}
\begin{equation*}
\begin{split}
d_{T,J_1} (\Phi_{F_n}(X^n),\Phi_F(X)) &\le \left(1 +\frac{(1+\kappa)\kappa}{1-\hat{\gamma}}\right)d_{T,J_1} (X^n,X) \\
&+ \kappa\left(\frac{(1+\kappa)\kappa}{(1-\hat{\gamma})^2} + \frac{1}{1-\hat{\gamma}}\right)\|X\|_{T,1}\|\boldsymbol{F}_n-\boldsymbol{F}\|_{op}.
\end{split}
\end{equation*}
\end{theorem}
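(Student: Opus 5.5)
The plan mirrors the proof of Theorem \ref{thm:skolipfree}: first use the rescaling identity to move the time change inside the reflection maps, then apply the uniform-norm bound of Theorem \ref{thm:seqlipschitz} to the time-changed input, and finally pass to the infimum over $\lambda$.

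\emph{Step 1: time-change invariance.} Fix $n\ge N$, where $N$ and $\hat\gamma$ are as in the proof of Theorem \ref{thm:seqlipschitz}, and fix any $\lambda\in\Lambda$. Proposition \ref{prop:rescale} with $w=0$, $\alpha=1$ and the time change $\lambda$ gives
\[
\Psi_{F_n}(X^n)\circ\lambda=\Psi_{F_n}(X^n\circ\lambda),\qquad \Phi_{F_n}(X^n)\circ\lambda=\Phi_{F_n}(X^n\circ\lambda).
\]
Since $\lambda$ is onto $[0,T]$, $X^n\circ\lambda$ again lies in $\mathcal{D}_T(L_1)$, so these are legitimate solutions of the Skorokhod problem.

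\emph{Step 2: uniform bound on the time-changed input.} Apply Theorem \ref{thm:seqlipschitz}(i) with $X^n$ replaced by $X^n\circ\lambda$. Its constants depend only on $\boldsymbol{D}$, $\kappa$ and $\hat\gamma$, which are determined by the operators, and on $\|X\|_{T,1}$ for the limit path. They do not depend on the perturbed free process, so they are uniform in $\lambda$. This gives
\[
\|\Psi_{F_n}(X^n)\circ\lambda-\Psi_F(X)\|_{T,1}\le \frac{\kappa}{1-\hat\gamma}\|X^n\circ\lambda-X\|_{T,1}+C_n,
\]
where $C_n=\frac{\kappa^2\|X\|_{T,1}}{(1-\hat\gamma)^2}\|\boldsymbol{F}_n-\boldsymbol{F}\|_{op}$.

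\emph{Step 3: combine with the time-distortion term and take the infimum.} Write $A=\|X^n\circ\lambda-X\|_{T,1}$, $B=\sup_t|\lambda(t)-t|$ and $c=\kappa/(1-\hat\gamma)$. The quantity to control is $(cA+C_n)\lor B$.

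Here the constant must be at least $1$, which is the main obstacle. In the $J_1$ metric the time-distortion term is not multiplied by the Lipschitz constant, so one needs $B\le c\,(A\lor B)$. This holds because $\kappa=\|\boldsymbol{D}^{-1}\|_{op}\|\boldsymbol{D}\|_{op}\ge1$ and $1-\hat\gamma\le1$, so $c\ge1$. The same check is needed for the $\Phi$ constant, which is $1+\frac{(1+\kappa)\kappa}{1-\hat\gamma}\ge1$.

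Using $C_n\ge0$, we get
\[
(cA+C_n)\lor B\le c\,(A\lor B)+C_n.
\]
Taking the infimum over $\lambda\in\Lambda$ on both sides yields the first inequality.

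\emph{Step 4: the reflected process.} Repeat Steps 1–3 for $\Phi$, using Theorem \ref{thm:seqlipschitz}(ii) and the $\Phi$ part of Step 1. The additive term is now $\kappa\big(\frac{(1+\kappa)\kappa}{(1-\hat\gamma)^2}+\frac{1}{1-\hat\gamma}\big)\|X\|_{T,1}\|\boldsymbol{F}_n-\boldsymbol{F}\|_{op}$. This term is again independent of $\lambda$, so it passes through the infimum unchanged and gives the second inequality.
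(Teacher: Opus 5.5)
Your proposal is correct and follows the paper's proof. You use Proposition \ref{prop:rescale} (with $w=0$, $\alpha=1$) to write $\Psi_{F_n}(X^n)\circ\lambda=\Psi_{F_n}(X^n\circ\lambda)$, apply Theorem \ref{thm:seqlipschitz} to the time-changed input, and take the infimum over $\lambda$. Your final step justifies the conclusion more carefully than the paper does: you use the pointwise bound $(cA+C_n)\lor B\le c\,(A\lor B)+C_n$ and check explicitly that $c\ge1$ because $\kappa\ge1$. The paper instead bounds an infimum of a sum by a sum of infima, taking $\lambda(t)\equiv t$ in the second, and never notes that the Lipschitz constant must be at least $1$.
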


\begin{proof}
Applying Theorem \ref{thm:seqlipschitz} and choosing $\lambda(t)\equiv t$ in the second part of the first inequality, we have
\begin{align*}
d_{T,J_1} (\Psi_{F_n}(X^n),\Psi_F(X))&= \inf_{\lambda\in\Lambda}\Big\{\|\Psi_{F_n}(X^n)\circ\lambda-\Psi_F(X)\|_{T,1}\lor\sup_{0\le t\le T}|\lambda(t)-t|\Big\}\\
&= \inf_{\lambda\in\Lambda}\Big\{\|\Psi_{F_n}(X^n\circ\lambda)-\Psi_F(X)\|_{T,1}\lor\sup_{0\le t\le T}|\lambda(t)-t|\Big\}\\
&\le \inf_{\lambda\in\Lambda}\Big\{\frac{\kappa}{1-\hat{\gamma}}\|X^n\circ\lambda-X\|_{T,1}\lor\sup_{0\le t\le T}|\lambda(t)-t|\Big\} \\
&\quad+ \inf_{\lambda\in\Lambda}\Big\{\frac{\kappa^2\|X\|_{T,1}}{(1-\hat{\gamma})^2}\|\boldsymbol{F}_n-\boldsymbol{F}\|_{op}\lor\sup_{0\le t\le T}|\lambda(t)-t|\Big\} \\
&\le \frac{\kappa}{1-\hat{\gamma}}d_{T,J_1} (X^n,X) + \frac{\kappa^2\|X\|_{T,1}}{(1-\hat{\gamma})^2}\|\boldsymbol{F}_n-\boldsymbol{F}\|_{op}.
\end{align*}
The second result follows similarly.
\end{proof}

\begin{remark}
While Proposition \ref{prop:rescale} and Theorems \ref{thm:skolipfree} and \ref{thm:skolipboth} imposed Assumption \ref{assumption:generalF} on the reflection operators, this additional assumption only follows from its use in proving results on existence, uniqueness, and Lipschitz continuity via the diagonalization method of this paper. In fact, if one instead adopts the approach used in \cite{clarke2026growing}, existence, uniqueness, and Lipschitz continuity can be shown without Assumption \ref{assumption:generalF} required. It therefore follows that Proposition \ref{prop:rescale} and Theorems \ref{thm:skolipfree} and \ref{thm:skolipboth} all hold true without the additional Assumption \ref{assumption:generalF}.
\end{remark}

\allowdisplaybreaks

\bigskip 
\appendix
\section{Additional Proof}
\label{sec:appendix}

\begin{proof}[Proof of Lemma \ref{lem:doeblintype}]

We first prove eventual strong positivity. Under the conditions of Lemma \ref{lem:doeblintype} there exist $m \in \NN$, $\varepsilon > \frac{1}{2}$, and a density $\nu > 0$ a.e.\ such that 
$$(F^T)^{(m)}(u,v) \ge \varepsilon \nu(v).$$
Let $f\in L_\infty([0,1])$ where $f\ge0$ $f>0$ on a set of positive measure. Then
$$((\boldsymbol{F}^T)^{(m)} f)(u)
= \int_0^1 (F^T)^{(m)}(u,v)\,f(v)\,dv
\ge \varepsilon \int_0^1 \nu(v) f(v)\,dv.$$
Since $\nu > 0$ a.e., we have
$$\operatorname*{ess\,inf}_{u\in[0,1]}((\boldsymbol{F}^T)^{(m)}f)(u) > 0,$$
thus satisfying (i) of Assumption \ref{assumption:generalF}.

\medskip

\noindent To prove quasi-compactness, first note that for each $u,v$, we can write
$$(F^T)^{(m)}(u,v) = \varepsilon \nu(v) + R(u,v),
$$
where 
$$ R(u,v) := (F^T)^{(m)}(u,v) - \varepsilon \nu(v) \ge 0.$$
Define the corresponding operators $\boldsymbol{K}$ and $\boldsymbol{R}$ acting on $L_\infty([0,1])$ through
$$(\boldsymbol{K}f)(u) := \varepsilon \int_0^1 \nu(v) f(v)\,dv, 
\quad (\boldsymbol{R}f)(u) := \int_0^1 R(u,v) f(v)\,dv.$$
It follows that $(\boldsymbol{F}^T)^{(m)} = \boldsymbol{K} + \boldsymbol{R}$. As $\boldsymbol{K}$ is a rank-one operator, it is hence compact. Moreover, as $\nu$ is a probability density,
$$\int_0^1 R(u,v)\,dv
= \int_0^1 (F^T)^m(u,v)\,dv - \varepsilon \int_0^1 \nu(v)\,dv
\le 1 - \varepsilon,$$
and so $\rho_{ess}(\boldsymbol{R})\le\|\boldsymbol{R}\|_\infty \le 1-\epsilon$. Applying \cite[Lemma 4.8]{mallet2010generalizing}, we have $\rho_{ess}(\boldsymbol{F}^{(m)})\le1-\epsilon$.
Using \eqref{eq:doeblin} and taking $L_\infty([0,1])\ni f\equiv1$, we have $(\boldsymbol{F}^T)^{(m)}f \ge \epsilon f$ and so $\rho(\boldsymbol{F}^{(m)}) \ge \epsilon$.
Since $\epsilon>\frac{1}{2}$,  we thus have $\rho_{ess}(\boldsymbol{F}^{(m)})<\rho(\boldsymbol{F}^{(m)})$. From \cite[Theorem 3.7]{mallet2010generalizing},  we have that $\rho_{ess}(\boldsymbol{F}^{(m)}) = \rho_{ess}(\boldsymbol{F})^m$ and since $\rho(\boldsymbol{F}^{(m)}) = \rho(\boldsymbol{F})^m$, it follows that $\rho_{ess}(\boldsymbol{F})<\rho(\boldsymbol{F})$, thus satisfying (ii) of Assumption \ref{assumption:generalF}.
\end{proof}

\bigskip


\end{document}